\newcommand{\vvNumberWithin}{section}
\newcommand*{\FIXME}[1]{}
\newcommand*{\Q}{\mathbb{Q}}
\newcommand*{\Z}{\mathbb{Z}}
\DeclareMathOperator{\id}{Id} 
\DeclareMathOperator{\Spec}{Spec}
\DeclareMathOperator{\Hom}{Hom}
\let\hom\relax
\DeclareMathOperator{\hom}{Hom}
\newtheoremstyle{misc}%
     {\topsep}
     {\topsep}
     {}
     {}
     {\itshape}
     {}
     { }
     {}
\newtheoremstyle{newdef}{\topsep}{\topsep}{}{}{\bfseries}{.}{ }{\thmnumber{#2}\thmnote{ #3}}
\newtheorem{master}{Master}[\vvNumberWithin]
\theoremstyle{newdef}
\theoremstyle{plain}
\newtheorem{theorem}[master]{Theorem}
\newtheorem*{theorem*}{Theorem}
\newtheorem*{result*}{Result}
\newtheorem{lemma}[master]{Lemma}
\newtheorem*{lemma*}{Lemma}
\newtheorem*{corollary*}{Corollary}
\newtheorem{proposition}[master]{Proposition}
\newtheorem*{proposition*}{Proposition}
\newtheorem{assumption-proposition}[master]{Assumption/Proposition}
\theoremstyle{definition}
\newtheorem*{example*}{Example}
\newtheorem*{application*}{Application}
\newtheorem{definition}[master]{Definition}
\newtheorem*{definition*}{Definition}
\newtheorem{remark}[master]{Remark}
\newtheorem*{remark*}{Remark}
\newtheorem{para}[master]{}
\newtheorem*{para*}{}
\newtheorem*{notation*}{Notation}
\newtheorem*{question*}{Question}
\newtheorem*{problem*}{Problem}
\theoremstyle{remark}
\newtheorem*{exercise*}{Exercise}
\numberwithin{equation}{subsection}
 \renewcommand{\theequation}{%
      \ifnum\value{subsection} > 0
      \ifnum\value{subsubsection} > 0
      \thesubsubsection.\Alph{equation}%
      \else%
      \thesubsection.\Alph{equation}%
      \fi%
      \else%
      \thesection.\Alph{equation}%
      \fi%
    }
\newcommand{\nocontentsline}[3]{}
\newcommand{\tocless}[2]{\let\tempcontentsline=\addcontentsline\let\addcontentsline=\nocontentsline#1{#2}\hspace{-1em}\let\addcontentsline=\tempcontentsline}
\newcommand{\tocnonum}[2]{{\tocless#1{#2} \addcontentsline{toc}{subsection}{#2}}}
\newcommand*{\vvspan}[1]{{\langle #1 \rangle}}
\newcommand*{\vvtspan}[1]{{\langle #1 \rangle}^\infty}
\newcommand*{\iso}{\cong}
\title{Punctual gluing of $t$-structures and weight structures}
\author{Vaibhav~Vaish}
\address{Stat-Math Unit, Indian Statistical Institute, 8th Mile, Mysore Road, Bangalore 560059}
\email{vaibhav\_if@isibang.ac.in}
\begin{document}
\begin{abstract}
	We formulate a notion of ``punctual gluing'' of $t$-structures and weight structures. As our main application we show that the relative version of Ayoub's $1$-motivic $t$-structure restricts to compact motives.  We also demonstrate the utility of punctual gluing by recovering several constructions in literature. In particular we construct the weight structure on the category of motivic sheaves over any base $X$ and we also construct the relative Artin motive and the relative Picard motive of any variety $Y/X$.
\end{abstract}
\maketitle
\section{Introduction}

\tocless\subsection{} The notion of a $t$-structure on a triangulated category was introduced in \cite{BBD} to aid in construction of the (abelian) category of perverse sheaves. 
Any $t$-structure on a triangulated category $D$ automatically gives rise to an abelian category, it's ``heart'', and the problem of constructing the category of perverse sheaves then reduces to constructing an appropriate $t$-structure on the derived category of sheaves. To this end it is useful to produce new $t$-structures and \cite[\S 1.4]{BBD} lays down a procedure of gluing for the same.

	A related notion is that of a weight structure (sometimes called a co-$t$-structure), and here we follow \cite{bondarko_weights}. The procedure of gluing is available for weight structures as well \cite[\S 8.2]{bondarko_weights}.

\tocless\subsection{} The notion of ``punctual gluing'' of $t$-structures (or weight structures) is implicit, for example, in the punctual criterion of perversity (see \cite[\S 4.0]{BBD}) or weights (\cite[\S 5.1]{BBD}) on $D^b_m(X,\Q_l)$. In fact punctual gluing is merely a formalization of a very simple generalization of the process that went into the construction of perverse $t$-structures in \cite[\S 2]{BBD}. We briefly explain this below.

Let us recall the construction of the perverse $t$-structure on $D^b(X):=D^b_m(X,\Q_l)$ in \cite{BBD}. 
The standard gluing results of $t$-structures in \cite[\S 1.4]{BBD} involves constructing a $t$-structure knowing one on each stratum of a \emph{fixed} stratification of $X$ in locally closed subsets. 
Firstly, one fixes a monotone step function, the perversity function $p$. This fixes a $t$-structure on $D^b(S)$ for any locally closed subset $S\subset X$ depending only on $p(\dim S)$, denoted $t[{p(\dim X)}]$ (here $t$ is the standard $t$-structure). Then given any stratification $\mathcal S$ of $X$ one obtains a glued $t$-structure $p_{\mathcal S}$ on $D^b(X)$ using the gluing results of \cite[\S 1.4]{BBD}. Furthermore, by restricting the class of sheaves to so called ``constructible sheaves'' $D^b_c(X)$ (see \cite[\S 2.2]{BBD}), one ensures that the truncation for $p_{\mathcal S}$ on any specific constructible sheaf is independent of $\mathcal S$, provided we choose $\mathcal S$ fine enough. This truncation then corresponds to a $t$-structure on $D^b_c(X)$ which is defined to be the perverse $t$-structure corresponding to perversity $p$, $(D^{t\le p}(X), D^{t> p}(X))$. 

In \cite[\S 4.0]{BBD} one proves a punctual criterion for characterizing the perverse $t$-structure, that is, it characterizes the objects $A\in D^{t\le p}(X)$ by conditions on the stalk $A_x$ for all $x\in X$: $A\in D^{t\le p}(X)$ if and only if  $A_x\in D^{t\le 0}(x)[p(\dim x)] = D^{t\le -p(\dim x)}(x)$ where $D^{t\le 0}(x)$ is the negative part of the standard $t$-structure $t$ on $D^b(x)$.

The idea of ``punctual gluing'' is to be able to turn this characterization into a definition and construct a $t$-structure on (the derived category of sheaves on) a scheme $X$, knowing one on each point $x\in X$ (not necessarily closed). It turns out that the key property of the category of constructible sheaves in this context is ``continuity'' that is:
	\begin{equation}\label{2lim}
		D^b_c(x) \cong 2\lim_{x\in U}D^b_c(U)
	\end{equation}
	for any $x\in X$ where $U$ varies over open neighborhoods of $x$, where for $f:V\hookrightarrow U$, the corresponding map in the limit is $f^*:D^b_c(U)\rightarrow D^b_c(V)$. The restrictions on perversity function $p$ can then be recast as restrictions on the corresponding $t$-structure $t[p(\dim x)]$ on $D^b_c(x)$, leading to the notion of \emph{continuity of $t$-structures} \ref{gluing:continuityForT}.
	
	Then, in presence of continuity and given a $t$-structure $t(x)$ each on $D^b_c(x)$ satisfying continuity of $t$-structures it will be formal to construct a glued $t$-structure on $D^b_c(X)$:
	\begin{proposition*}[See \ref{gluing:mainresult}]
		Fix a Noetherian scheme $X$, and assume that for each $S\subset X$ we are given a triangulated category $D_S$ (``constructible sheaves'') with Grothendieck's four functors \ref{gluing:4functors} and for each Zariski point $x\in X$ (not necessarily closed) we are given a triangulated category $D(x)$ satisfying continuity \ref{gluing:continuity}. 
		
		Assume furthermore that for each $x\in X$ we are given a $t$-structure (resp. a  weight structure) $(D^{\le}(x), D^>(x))$ on $D(x)$ satisfying continuity for $t$-structures (equivalently, for weight structures) \ref{gluing:continuityForT}. Then:
		\begin{align*}
			D^{\le}(X)	:=\{a&\in D_X\big| \epsilon^*(a)\in D^{\le}(x)\text{ for }\epsilon:x\hookrightarrow U\text{ any point of }X\} \\
			D^{>}(X)	:=\{a&\in D_X\big| \epsilon^!(a)\in D^{>}(x)\text{ for }\epsilon:x\hookrightarrow U\text{ any point of }X\}		
		\end{align*}
		is a $t$-structure (resp. a weight structure) on $D_X$.
	\end{proposition*}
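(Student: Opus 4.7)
The plan is to proceed by Noetherian induction on the closed support of $X$, combining the continuity hypothesis \ref{gluing:continuityForT} with the BBD gluing theorem \cite[\S 1.4]{BBD} (respectively Bondarko's gluing \cite[\S 8.2]{bondarko_weights} for weight structures). I will describe the $t$-structure case; the weight structure argument runs in parallel.

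Shift-invariance of $D^{\le}(X)$ and $D^{>}(X)$ is immediate, as each $\epsilon^*$ and $\epsilon^!$ commutes with the shift and each pointwise $(D^{\le}(x), D^{>}(x))$ is itself a $t$-structure. I would establish orthogonality and the existence of truncations simultaneously by producing a glued $t$-structure and then identifying it with the pointwise definition. Pick the generic points $\eta_1, \ldots, \eta_n$ of the irreducible components of $X$; by continuity, each $(D^{\le}(\eta_i), D^{>}(\eta_i))$ descends from a $t$-structure on $D_{U_i}$ for some open neighborhood $U_i$ of $\eta_i$, and after a suitable shrinking and patching procedure I would produce a single open $U \subset X$ containing every $\eta_i$ together with a $t$-structure $(D^{\le}_U, D^{>}_U)$ on $D_U$ whose pointwise restrictions at every $x \in U$ match the prescribed data. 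Writing $j:U \hookrightarrow X$ and $i:Z \hookrightarrow X$ for the resulting open-closed decomposition with $Z = X \setminus U$, the Noetherian induction hypothesis applied to $Z$ provides a $t$-structure on $D_Z$ with the analogous pointwise matching. BBD gluing then assembles these two pieces into a $t$-structure $(D^{\le}_X, D^{>}_X)$ on $D_X$ characterized by $j^* A \in D^{\le}_U, i^* A \in D^{\le}_Z$ for the nonpositive part and $j^* A \in D^{>}_U, i^! A \in D^{>}_Z$ for the strictly positive part.

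To match this glued $t$-structure with the pointwise definition of the proposition, note that any $\epsilon:x \hookrightarrow X$ factors through either $U$ or $Z$. For $x \in U$, since $j$ is an open immersion one has $j^! = j^*$, so both $\epsilon^* A$ and $\epsilon^! A$ are stalks of $j^* A$; for $x \in Z$, the functors $\epsilon^*$ and $\epsilon^!$ factor as stalks of $i^* A$ and $i^! A$ respectively. The pointwise matching on $U$ and (inductively) on $Z$ then yields both equalities $D^{\le}_X = D^{\le}(X)$ and $D^{>}_X = D^{>}(X)$.

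The hard part, I expect, is the shrinking-and-patching step that produces a single open $U$ with pointwise matching at \emph{every} point of $U$, starting only from continuity at the finitely many generic $\eta_i$. The individual continuities yield $t$-structures on overlapping open neighborhoods which must be shown to agree on intersections, and the resulting $t$-structure on $D_U$ must be shown to match the pointwise data at every specialization in $U$, not only at the generic points. I expect this compatibility lemma itself to be an internal Noetherian induction on $U$, iteratively applying BBD gluing along a finite stratification where continuity directly furnishes the required $t$-structure stratum by stratum. Once it is in place, the outer induction above proceeds mechanically, and the weight structure case is identical after substituting the gluing axioms of \cite[\S 8.2]{bondarko_weights} for those of BBD.
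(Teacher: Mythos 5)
There is a genuine gap, and it sits exactly where you flagged the ``hard part.'' The continuity hypothesis \ref{gluing:continuityForT} does \emph{not} furnish a $t$-structure on $D_{U}$ for any open neighborhood $U$ of a generic point: it only says that an \emph{individual} object $a\in D^{\le}(\eta)$ spreads out to some $\bar a$ lying in the pointwise-defined class $D^{\le}(U)$ (and dually for the positive part), where the open set depends on $a$. Asserting that the pointwise classes $(D^{\le}(U),D^{>}(U))$ form a $t$-structure on $D_U$ is precisely the statement of the theorem with $U$ in place of $X$, and since $U$ is open dense its closure is all of $X$ --- it is not smaller in the Noetherian ordering, so neither your outer induction nor your proposed ``internal Noetherian induction on $U$'' can appeal to it. There is also no fixed stratification of $U$ on which continuity ``directly furnishes'' $t$-structures stratum by stratum; the whole point of the punctual setup is that the data is given only at points. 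So the compatibility lemma you defer to is not a lemma but the theorem itself, and the reduction is circular. A secondary consequence is that orthogonality, which you propose to harvest from the glued structure, is left unproved.

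The fix is to work object by object rather than constructing a $t$-structure on a fixed $U$ first. Given $h\in D_X$, decompose $\eta^{*}h$ at a generic point, use continuity of the $t$-structure to spread the two pieces to $\bar a\in D^{\le}(U)$, $\bar b\in D^{>}(U)$ on an open $U$ \emph{depending on $h$}, and use fullness/faithfulness of continuity to spread the maps so that $\bar a\to j^{*}h\to\bar b$ becomes a distinguished triangle after further shrinking. One then never invokes a $t$-structure on $D_U$: one forms $u=\mathrm{Cocone}(h\to j_{*}\bar b)$, truncates $i^{*}u$ on the closed complement $Z$ (which \emph{is} smaller, so the induction hypothesis applies), and assembles the decomposition of $h$ by an octahedron argument, checking membership in $D^{\le}(X)$ and $D^{>}(X)$ by computing $i^{*},j^{*},i^{!}$ of the resulting pieces. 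Orthogonality is proved separately and directly: if $\hom(\eta^{*}a,\eta^{*}b)=0$ then by the faithfulness clause of \ref{gluing:continuity} any $f:a\to b$ dies on some open $j:U\hookrightarrow X$, hence factors through $i_{*}i^{!}b$, and $\hom(i^{*}a,i^{!}b)=0$ by Noetherian induction on $Z$. Your identification of invariance as immediate and of the generic-point/open--closed skeleton is correct, but the load-bearing step must be restructured as above.
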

	
	Here $\epsilon^*(-)$ is the map relating $D^b_c(U)$ for $U=X$ and $D^b(x)$ in the equation \eqref{2lim} while $\epsilon^! := {\epsilon'}^* i^!$ with $\epsilon:x\overset{\epsilon'}\hookrightarrow Y\overset{i}\hookrightarrow X$ with $x$ generic in $Y$. \-\\

	Here $D_X$ can be bounded derived category of mixed $l$-adic sheaves $D^b(X,\Q_l)$, but it can also be the tensor triangulated category of motivic sheaves. This allows us to construct $t$-structures in the ``relative'' setting (that is over some scheme $X$) beginning with those in the absolute setting (that is when $X=\Spec k$ for a field $k$).

\tocless\subsection{} Our interest in punctual gluing lies in the quest to construct new $t$-structures on the (triangulated) category of motives. 

	In the conjectural picture of Beilinson \cite{beilinson1987height} one should be able to construct an abelian category of mixed motivic sheaves over any scheme $X$, related through the formalism of Grothendieck's six operations, and such that the relation with algebraic cycle homology theories is as expected. While this category still seems distant, we now have a tensor triangulated category, denoted here by $DM(X)$, which acts as the derived category of this conjectural category, is equipped with the  Grothendieck's six functors (we work with $\Q$-coefficients throughout), and has the correct relationship with the algebraic cycle homology theories -- this is now due to several people, for example, this can be the category of etale motivic sheaves without transfer, $DA(X,\Q)$ of Ayoub \cite[2.1]{ayoub2012relative} or Beilinson motives, $DM_{B, c}(X,\Q)$ of Cisinski-Deglise \cite{cisinski2012triangulated}. 
	
	The Beilinson's conjectures then reduce to the problem of constructing an appropriate $t$-structure on $DM(X)$. While this $t$-structure seems to be out of reach at the moment, it is possible to construct appropriate subcategories of $DM(X)$ on which this conjectural $t$-structure restricts, and also explicitly compute the corresponding restriction. A step in this direction is taken in \cite{ayoub2011nmotivic} where he constructs this restriction on so called categories of $n$-motives, for $n=0, 1, 2$. For $n=0, 1$, these categories (but not the $t$-structure) are also explored in \cite{ayoub20091motivic}. For $n=1$, this $t$-structure is also constructed in \cite{MR2102056}, and in fact was already indicated by Voevodsky \cite{voevodsky}.
	
	The main result of \cite{lehalleur2015motivic} is to relativize this to any base $S$. However, as a limitation of their work, they are unable to prove that this $t$-structure restricts to compact objects, which is important for it to make geometric sense. 
	By re-constructing their $t$-structure using punctual gluing, we are able to overcome this limitation and we prove the following:
	\begin{theorem*}[See \ref{application:t1MMrespectsCompact}]
		The $t$-structure $t^1_{MM}(S)$ on $DA^1(S)$ of \cite[\S 4]{lehalleur2015motivic} restricts to compact objects, that is a $t$-structure on $DA^1_c(S)=DM^{coh}_{1-mot}(S)$.
	\end{theorem*}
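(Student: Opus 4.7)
The plan is to apply the punctual gluing proposition \ref{gluing:mainresult} \emph{directly} to the compact category, taking $D_S := DA^1_c(S) = DM^{coh}_{1-mot}(S)$ as input. The punctual characterization of $D^{\le}(X)$ and $D^{>}(X)$ is intrinsic to whatever category $D_X$ one supplies, so if the hypotheses of \ref{gluing:mainresult} are verified in the compact setting, the construction automatically yields a $t$-structure \emph{on the compact category}, which is precisely what is wanted. The final step is then to identify this with the restriction of Lehalleur's $t^1_{MM}(S)$ to $DA^1_c(S)$.

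The pointwise input required is a $t$-structure on $DA^1_c(x)$ for each Zariski point $x \in S$. When $x=\Spec k(x)$ is the spectrum of a field, the $1$-motivic $t$-structure on $DA^1(k(x))$ is already known to restrict to compact objects: this is the absolute ($n=1$) content of \cite{ayoub2011nmotivic}, and was indicated earlier in \cite{MR2102056} and \cite{voevodsky}. Hence the pointwise data is available. The continuity of the assignment $U\mapsto DA^1_c(U)$ in the sense of \ref{gluing:continuity} should be inherited from the known continuity of the ambient compact motivic category, since $DA^1_c$ is cut out inside $DA_c$ by a condition preserved under the pullback functors $f^*$. Continuity of the pointwise $t$-structures \ref{gluing:continuityForT} is more substantive: one must show that the conditions defining $DA^{1,\le}(x)$ and $DA^{1,>}(x)$ spread to open neighborhoods of $x$, i.e.\ that if $\epsilon^*a\in DA^{1,\le}(x)$ for some $a\in DA^1_c(U)$, then $f^*a\in DA^{1,\le}(V)$ for an open $V\subset U$ containing $x$ with inclusion $f$, and dually for $\epsilon^!$.

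Once both continuities are established, Proposition \ref{gluing:mainresult} immediately produces a $t$-structure on $DA^1_c(S)$. To identify it with the restriction of $t^1_{MM}(S)$, I would apply the same punctual gluing machine to the full category $DA^1(S)$ with the same pointwise data, and verify that the result recovers Lehalleur's construction by comparing with the stratification-gluing definition of $t^1_{MM}$, along the lines of the perverse $t$-structure discussion in the introduction; the $t$-structure on $DA^1_c(S)$ is then the restriction of the one on $DA^1(S)$ by direct inspection of the punctual definitions. I expect the main obstacle to be verifying continuity of the pointwise $t$-structures: the heart of the $1$-motivic $t$-structure over a field consists of $1$-motives in Deligne's sense, built out of lattices, tori, abelian varieties and their extensions, and one must ensure that the conditions cutting out the $t$-structure, phrased in terms of these building blocks, spread from a single point to a Zariski neighborhood. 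Once this spreading is controlled, the remaining steps are formal consequences of \ref{gluing:mainresult}.
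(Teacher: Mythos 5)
Your high-level strategy coincides with the paper's: apply punctual gluing \ref{gluing:mainresult} directly to $DA^1_c$, feed in the field-level $t$-structure of \cite{ayoub2011nmotivic}/\cite{MR2102056}, and then identify the result with the restriction of $t^1_{MM}(S)$. But there are two genuine gaps. First, you cannot set up the gluing machine by saying that continuity of $U\mapsto DA^1_c(U)$ is ``inherited from the ambient category since $DA^1_c$ is cut out by a condition preserved under $f^*$.'' The hypotheses of \ref{gluing:mainresult} require the full four-functor formalism \ref{gluing:4functors} on the category you glue in, and the functors $j_*$ and $i^!$ do \emph{not} preserve $DM^{coh}_{1-mot}$. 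The paper must first construct the right adjoint $\omega^1$ to the inclusion $DM^{coh}_{1-mot}(X)\hookrightarrow DM^{coh}(X)$ (Theorem \ref{application:picard}, or \cite[\S 3]{lehalleur2015motivic}) and replace $j_*$, $i^!$ by $\omega^1\circ j_*$, $\omega^1\circ i^!$, then verify localization, adjunctions, composition laws, etc.\ for the modified functors (Lemma \ref{1motive:formalism}). Without this, the definition of $\epsilon^!$ and the decomposition step of \ref{gluing:mainresult} (which uses $j_*$ and $i_*\tau^Z_> i^*$) do not even make sense inside $DA^1_c$. Second, the continuity of the pointwise $t$-structures, which you correctly flag as the main obstacle, is left entirely open; in the paper this is the bulk of the argument: one transports to $D^b(\mathcal M_1(k))$ via $\Sigma^\infty_{coh}$, spreads out Deligne $1$-motives using \cite[A.11]{lehalleur2015motivic}, inducts on the number of nonvanishing cohomology objects, and for the positive part uses the $t$-exactness of $\omega^1 i^!$ for $t^1_{MM}$ from \cite[4.13]{lehalleur2015motivic} together with an orthogonality argument. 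Crucially, the induction must also carry the extra statement that the spread-out objects lie in $D^{\le t^1_{MM}}(U)$ resp.\ $D^{>t^1_{MM}}(U)$, which is what later makes the comparison with $t^1_{MM}(S)$ possible.

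Your proposed identification step is also not viable as stated: you suggest running the same punctual gluing on the full category $DA^1(S)$ and comparing, but continuity in the sense of \ref{gluing:continuity} is a constructibility phenomenon and fails for the non-compact category, so \ref{gluing:mainresult} does not apply there. The paper instead proves directly that $D^{\le}(S)\perp D^{>t^1_{MM}}(S)$ and $D^{\le t^1_{MM}}(S)\perp D^{>}(S)$ by Noetherian induction on $S$, using the localization triangle, the $t$-exactness of $j^*$ and $\omega^1 i^!$ for $t^1_{MM}$, and the ``additional claims'' tracked through the continuity proof; by \ref{otherIsPerp} these orthogonalities force the two $t$-structures to agree. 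You should replace your comparison step with an argument of this kind.
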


\tocless\subsection{} We demonstrate the utility of punctual gluing by recovering several constructions in the literature in a more streamlined fashion. For example we recover the weight structures on motivic sheaves due to \cite{hebert2011structure} and \cite{bondarko2014weights}:
\begin{theorem*}[See \ref{application:weights}]
	The weight structure due to \cite{bondarko_weights} on $DM(k)$ for $k$ any field satisfies continuity of weight structures, and hence by punctual gluing one can construct a weight structure $(DM_{w\le 0}(X), DM_{w>0}(X))$ on $DM(X)$, for any Noetherian scheme $X$.
\end{theorem*}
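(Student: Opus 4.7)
The plan is to reduce the statement to the punctual gluing result \ref{gluing:mainresult}: it suffices to verify that Bondarko's weight structure on $DM(k)$, as $k$ ranges over residue fields of points of $X$, satisfies the continuity hypothesis \ref{gluing:continuityForT}. Once this is done, both the weight structure $(DM_{w\le 0}(X), DM_{w>0}(X))$ and the fact that it is glued in the pointwise manner follow immediately from the formalism.

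First I would recall the structural description of Bondarko's weight structure. Over any field $k$ (with $\Q$-coefficients), the heart of the weight structure on $DM(k)$ is equivalent to the pseudo-abelian category of Chow motives, and as a weight structure it is generated by the objects $M(Y)(n)[2n]$ for $Y/k$ smooth projective and $n \in \Z$; explicitly, $DM_{w\le 0}(k)$ is obtained from these generators by shifts in the negative direction, extensions, and retracts. The continuity of the ambient categories $DM_c(x) \cong 2\lim_{x\in U} DM_c(U)$ is already available for motivic sheaves, in the form proved by Ayoub for $DA$ and by Cisinski-D\'eglise for Beilinson motives.

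The substance of the verification is continuity for the generators under the structure maps of \eqref{2lim}. For a field extension $\ell/k$, the pullback $f^\ast \colon DM(k) \to DM(\ell)$ sends $M(Y)(n)[2n]$ to $M(Y_\ell)(n)[2n]$, which is again a Chow motive; hence $f^\ast$ is weight-exact on the generators, so weight-exact in general. Conversely, given a smooth projective $Y/k(x)$, standard spreading-out yields a smooth projective $\mathcal Y / U$ for some open $U \ni x$ with $\mathcal Y|_x \cong Y$, so that $M(Y)(n)[2n]$ lifts to $M(\mathcal Y)(n)[2n] \in DM_c(U)$. This matches generators over $k(x)$ with generators on neighborhoods, which is the content one expects of \ref{gluing:continuityForT} for weight structures.

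The main obstacle I anticipate is passing from generators to arbitrary weight-truncated objects, since a given $A \in DM_{w \le 0}(k(x))$ need not itself be of the form $M(Y)(n)[2n]$ but only built from such via shifts, extensions and retracts. Thus one must ensure that a weight-$\le 0$ object over $k(x)$ admits a lift to a weight-$\le 0$ object in some $DM(U)$ compatibly with the 2-limit description; this requires that the operations of extension and idempotent-completion interact correctly with the filtered 2-colimit on categories. Once this technical point is handled (using the fact that Hom-groups and idempotents in the 2-limit are themselves filtered colimits), continuity for the weight structure follows, and the remainder of the theorem is a direct invocation of \ref{gluing:mainresult}.
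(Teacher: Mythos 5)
Your high-level strategy — reduce to continuity of the weight structure \ref{gluing:continuityForT} on generators and then invoke \ref{gluing:mainresult} — is the same as the paper's, and the reduction from arbitrary objects of $DM_{w\le 0}(k)$ or $DM_{w>0}(k)$ to generators of the heart is correctly identified and handled correctly in the paper exactly as you sketch (extensions and retracts do lift along open shrinkings, by the full/faithful/essentially-surjective form of continuity). However, there is a genuine gap: you only verify what happens under $\epsilon^\ast$, i.e.\ you only address the negative part. Continuity for the positive part $D^{>}(K)$ requires controlling $\epsilon^{!}$, and $D^{>}(U)$ is defined by demanding $\delta^{!}\bar a\in D^{>}(L)$ for \emph{every} point $\delta:\Spec L\hookrightarrow U$, not just the generic one.

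This is where the actual work of the paper sits. For a non-generic point $\delta$, $\delta^{!}$ is $\underline\delta^*\circ i^!$ with $i$ a closed immersion, and $i^!$ applied to the spread-out Chow generator $\bar p_* 1_{\bar X}$ introduces a Tate twist and shift of the form $(-c)[-2c]$ by absolute purity, where $c$ is the codimension. The argument would collapse if the shift were anything else; it works \emph{only} because Bondarko's heart $H(k)=\mathcal R(p_* 1_X(c)[2c])$ is by construction closed under precisely these Chow twists, so $p_{L*}1_{W_L}(-c)[-2c]$ still lies in $H(L^{\mathrm{perf}})$, and after applying the external shift $[n]$ one stays in $DM_{w>0}$. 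This computation is Lemma \ref{lemma:boat} in the paper, and also handles the passage through purely inseparable extensions (residue fields need not be perfect, so one must show $DM(K)\cong DM(K^{\mathrm{perf}})$ and spread out the finite radicial cover). Without that lemma, or something equivalent controlling $\delta^{!}$ on generators against absolute purity, the continuity hypothesis for the positive part of the weight structure is simply not established. Your proposal would need to be supplemented by this computation before it could be accepted.
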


In \cite{ayoub20091motivic} Ayoub-Viale define the triangulated subcategory of $0$-motives and $1$-motives inside $DM^{eff}(k)$. Dually, one can define the subcategory of $0$-motives and $1$-motives inside $DM^{coh}(k)$, and this is denoted here by $DM^{coh}_{i-mot}(k)$ for $i\in\{0, 1\}$. Then it follows from Ayoub-Viale \cite[\S 5]{ayoub20091motivic} (and using duality, see \cite[3.12]{lehalleur2015motivic} for example) that the inclusion $DM^{coh}_{i-mot}(k)\hookrightarrow DM^{coh}(k)$ admits a right adjoint. As another example this can be immediately relativized using punctual gluing, recovering the main construction of \cite[\S 3]{lehalleur2015motivic}:
\begin{theorem*}[See \ref{application:picard}] Let $i\in\{0,1\}$. Then
	\begin{align*}
		DM^{coh}_{i-mot}(X)\hookrightarrow DM^{coh}(X)\text{ admits a right adjoint }\omega^i_X:DM^{coh}(X)\rightarrow DM^{coh}_{i-mot}(X)		
	\end{align*}
\end{theorem*}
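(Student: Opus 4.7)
The plan is to deduce this from the punctual gluing proposition \ref{gluing:mainresult} applied to the pointwise right adjoints $\omega^i_x$. Over a field $k$, the inclusion $DM^{coh}_{i-mot}(k)\hookrightarrow DM^{coh}(k)$ admits a right adjoint by Ayoub--Viale \cite[\S 5]{ayoub20091motivic} together with the duality argument recalled in the paragraph preceding the theorem. The existence of such a right adjoint to a fully faithful inclusion of a triangulated subcategory is equivalent to a semi-orthogonal decomposition $(DM^{coh}_{i-mot}(k), DM^{coh}_{i-mot}(k)^{\perp})$, which is nothing more than a $t$-structure on $DM^{coh}(k)$ whose negative (and hence also positive) part is shift-invariant. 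Thus at each Zariski point $x\in X$ we have in hand a $t$-structure on $DM^{coh}(x)$ of this shape, which is exactly the input required for the punctual gluing machinery.

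The core of the proof is then to verify continuity of $t$-structures \ref{gluing:continuityForT} for this collection. In view of the $2$-limit description \eqref{2lim}, continuity amounts to checking compatibility with the transition maps: for $\epsilon:x\hookrightarrow U$ one needs $\epsilon^*$ to carry $DM^{coh}_{i-mot}(U)$ into $DM^{coh}_{i-mot}(x)$, and the analogous statement for the right orthogonal along $\epsilon^!$. Since the subcategories $DM^{coh}_{i-mot}$ are generated, over any base, by (direct summands of pushforwards from) smooth schemes of relative dimension at most $i$, this compatibility reduces to a generator-by-generator check: finite \'etale covers for $i=0$, and relative smooth curves for $i=1$.

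Granting continuity, \ref{gluing:mainresult} produces a glued $t$-structure $(D^{\le}(X), D^{>}(X))$ on $DM^{coh}(X)$ with
\begin{equation*}
D^{\le}(X) = \{A\in DM^{coh}(X) : \epsilon^* A \in DM^{coh}_{i-mot}(x) \text{ for every } x\in X\}.
\end{equation*}
Because each pointwise $DM^{coh}_{i-mot}(x)$ is closed under shifts, so is $D^{\le}(X)$; the glued $t$-structure is therefore itself stable, i.e., a semi-orthogonal decomposition. Its truncation $\tau_{\le 0}$ is then the sought-after right adjoint $\omega^i_X$ to the inclusion of $D^{\le}(X)$, and one identifies this subcategory with $DM^{coh}_{i-mot}(X)$ either by taking the pointwise description as the definition of the latter, or by comparing with an intrinsic description in terms of generators.

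The main obstacle is the continuity verification for $i=1$. The Artin case is essentially formal, since base change and restriction to points preserve finite \'etale morphisms. For the $1$-motivic case one needs a more substantive geometric input: that the generators involving relative curves behave well under restriction from an open $U$ to one of its Zariski points $x$ in the $2$-limit description, which is where one recycles some of the content of \cite[\S 3]{lehalleur2015motivic}, even though the categorical packaging via punctual gluing becomes considerably cleaner.
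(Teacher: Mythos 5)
Your overall architecture matches the paper's: reduce the adjoint to an $m$-structure via \ref{tIsAdjoint}, take the field-level $t$-structure with negative part $DM^{coh}_{i-mot}(K)$ coming from Ayoub--Viale's $L\pi_0$ and $LAlb$ plus Verdier duality, and glue punctually via \ref{gluing:mainresult}. But there is a genuine gap in what you take ``continuity of $t$-structures'' to mean, and it swallows the hardest part of the argument. Condition \ref{gluing:continuityForT} is not the statement that $\epsilon^*$ carries $DM^{coh}_{i-mot}(U)$ into $DM^{coh}_{i-mot}(x)$ and dually along $\epsilon^!$ --- that compatibility is automatic from the punctual definition \ref{gluing:spreadingOut}. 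What must be proved is a spreading-out statement: every $a$ in the negative part at a point extends to some $\bar a$ over a neighborhood $U$ whose restriction $\delta^*\bar a$ lies in the negative part at \emph{every} point $\delta$ of $U$, and, crucially, every $a$ in the positive part --- which here is the right orthogonal $DM^{coh}_{i-mot}(K)^{\perp}$, not a category with a generator description --- extends to some $\bar a$ with $\delta^!\bar a$ in the right orthogonal at every point of $U$. Your generator-by-generator check (finite \'etale covers for $i=0$, relative curves for $i=1$) handles only the negative part, which is indeed the easy half via proper base change.

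The positive-part continuity is not formal even for $i=0$. The paper's proof spreads out an arbitrary $a=p_*1_W$ ($p$ smooth proper) to $\bar a$ and computes, via Lemma \ref{lemma:boat}, that for any non-generic point $\delta$ of $U$ one has $\delta^!\bar a = p_{L*}1_{W_L}(-c)[-2c]$ with $c>0$; orthogonality of this object to all Artin generators $g_*1_Z$ then follows from the vanishing of negatively twisted motivic cohomology in \ref{motives:6functors}(7). For $i=1$ even this does not suffice, and the paper imports a $\Hom$-vanishing from an external reference (\cite[3.3.6]{vaish2017weight}); your proposal asserts the opposite difficulty profile, locating the substance in the behaviour of relative curves under restriction. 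Without an argument that objects of the right orthogonal spread out into the right orthogonal --- which is exactly where the motivic cohomology computations enter --- the gluing theorem cannot be invoked, so the proof as proposed does not close.
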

	The object $\omega^0_X(p_*1_Y)$ (resp. $\omega^1_X(p_*1_Y)$) for $p:Y\rightarrow X$ computes the \emph{relative Artin motive} (resp. \emph{relative Picard motive}) of $Y$ over $X$. In particular the functor $\omega^0$ already appeared in \cite{ayoub2012relative} and in \cite{vaish2016motivic}.

\tocless\subsection{} 	There are several other conjectural $t$-structures on $DM(X)$ which are of interest. For example, one should be able to construct analogue of Morel's $t$-structures \cite{morelThesis} (rather, their mild generalization \cite{arvindVaish}). These $t$-structures depend on a choice of a monotone step function $D$, the weight profile. The constructions for $D=Id$, the identity function, was already used to recover the motive of reductive Borel-Serre compactification for an arbitrary Shimura variety in \cite{vaish2016motivic}.

	Punctual gluing will be used in \cite{vaish2017weight} to recover analogue of Morel's $t$-structures for some other weight profiles which in turn will have applications -- we will be able to construct a motivic analogue of the intersection complex for any arbitrary three-fold over any field $k$ which will be canonical enough to admit actions from correspondences on the open part (for example, Hecke actions on Shimura threefolds). We will also recover several invariants of singularities motivically. 
	
	The article \cite{vaish2017weight} was our main motivation for the formulation of punctual gluing.

\tocnonum\subsection{Outline} In \S\ref{sec:prelims} we review the notion of $t$-structures and weight structures, and recall some well known results. In \S\ref{sec:gluing}, we formulate the notion of continuity for $t$-structures and weight structures \ref{gluing:continuityForT} and prove our main result on punctual gluing \ref{gluing:mainresult}. \S\ref{sec:motives} recalls the notion of motivic sheaves \ref{motives:6functors} and that of cohomological motives \ref{definieCohMotives}, the arena where our applications live. In \S\ref{sec:applications} we have our main applications. 

In \S\ref{sec:application:bondarko} we construct weight structures on motivic sheaves \ref{application:weights}, beginning with their absolute counterpart. In \S\ref{sec:application:artinpicard}  we construct the right adjoints $\omega^i_X:DM^{coh}(X)\rightarrow DM_{i-mot}^{coh}(X)$ ($i\in\{0,1\}$) to the inclusion of the $i$-motivic category $DM_{i-mot}^{coh}(X)\subset DM^{coh}(X)$, thereby constructing the relative Artin motive and the relative Picard motive. For $i=0$, the proof is self contained but for $i=1$ we need a Hom-vanishing computed in \cite{vaish2017weight}. We can substitute the results of this section by those in \cite[\S 3]{lehalleur2015motivic} and hence the main application \S\ref{sec:application:1motivic} can be seen independently of this section.

   Finally in \S\ref{sec:application:1motivic} we have our main application \ref{application:t1MMrespectsCompact} demonstrating that the $t$-structure $t^1_{MM}(S)$ of  \cite{lehalleur2015motivic} restricts to compact objects which we show by reconstructing the same via punctual gluing.
	

\section{$t$-structures and weight structures}\label{sec:prelims}
\begin{para}A $t$-structure on a triangulated category $D$ (see \cite{BBD}) is a pair of full subcategories $(D^{\le t}, D^{>t})$ satisfying three properties:
\begin{itemize}
\item (Orthogonality) $\hom (a,b)=0, \forall a\in D^{\le t}, b\in D^{>t}$
\item (Invariance) $D^{\le t}[1]\subset D^{\le t}$, and $D^{> t}[-1]\subset D^{>t}$
\item (Decomposition) $\forall c\in D$, there is a distinguished triangle $a\rightarrow c\rightarrow b\rightarrow $ with $a\in D^{\le t}$ and $b\in D^{>t}$. 
\end{itemize}
\end{para}
\begin{para}
A weight structure (also called a a co-$t$-structure) on a triangulated category $D$ (see \cite{bondarko_weights}, for example) is the same as a $t$-structure, except that instead of invariance, it satisfies co-invariance. Also, one needs to assume that the given subcategories are closed under taking summands in $D$ (this is automatic for $t$-structures). That is a weight structure is a pair of full subcategories $(D^{\le t}, D^{>t})$ satisfying:
\begin{itemize}
	\item (Karaubi-closed) $D^{\le t}$ and $D^{>t}$ are closed under taking summands.
	\item (Orthogonality) $\hom (a,b)=0, \forall a\in D^{\le t}, b\in D^{>t}$
	\item (Co-invariance) $D^{\le t}[-1]\subset D^{\le t}$, and $D^{> t}[1]\subset D^{>t}$
	\item (Decomposition) $\forall c\in D$, there is a distinguished triangle $a\rightarrow c\rightarrow b\rightarrow $ with $a\in D^{\le t}$ and $b\in D^{>t}$. 	
\end{itemize}
\end{para}
\begin{para}
We will sometimes refer to $D^{\le t}$ as the negative part and $D^{>t}$ as the positive part of the $t$-structure (resp. of the weight structure).
\end{para}
\begin{para} For a $t$-structure, the decomposition of $c$ in the last property can be shown to be unique and then $a$ (resp. $b$) is often denoted as $\tau_{\le t}c$ (resp. $\tau_{>t}c$). It is easy to show that $\tau_{\le t}$ (resp. $\tau_{>t}$) give rise to a right (resp. left) adjoint to the fully faithful inclusion $D^{\le t}\hookrightarrow D$ (resp. $D^{>t}\hookrightarrow D$).

For a weight structure, the decomposition of an object $c$ is no longer unique. However we will continue to use the notations $\tau_{\le t}c$ (resp. $\tau_{>t}c$) to denote \emph{any} such decomposition.
\end{para}

\begin{para}We will be frequently interested in constructing $t$-structures which are also weight structures, that is where $D^{\le t}$ and $D^{>t}$ are triangulated subcategories of $D$ (in particular the core of the $t$-structure or the weight structure, $D^{\le t}\cap D^{>t}[1]$ resp. $D^{\le t}\cap D^{>t}[-1]$ by definition, is trivial). Let us call such a pair $(D^{\le t}, D^{>t})$ as an $m$-structure ($m$ stands for Morel, since the constructions are motivated by those used in \cite[\S 3]{morelThesis}).\end{para}

To construct a $m$-structure is equivalent to give an adjoint to a full triangulated subcategory:
\begin{proposition}\label{tIsAdjoint}
	Let $D^{\le t}$ be a full triangulated subcategory of a triangulated category $D$. Then to give a $m$-structure $(D^{\le t}, D^{> t})$ on $D$ is to give a right adjoint $\tau_{\le t}$ to the inclusion $D^{\le t}\hookrightarrow D$. In this case, $D^{> t}$ can be identified to the subcategory of objects $x$ in $D$ such that $\tau_{\le t}(x)=0$.
\end{proposition}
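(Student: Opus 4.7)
The ``only if'' direction is standard: for any $m$-structure the truncation $\tau_{\le t}$ is the right adjoint to the inclusion $D^{\le t}\hookrightarrow D$ (as noted in the paragraph preceding the proposition), and the identification $D^{>t}=\ker\tau_{\le t}$ then follows from uniqueness of $t$-structure truncations. The real content is the converse. Given the inclusion $i:D^{\le t}\hookrightarrow D$ of a full triangulated subcategory admitting a right adjoint $\tau_{\le t}$, my plan is to \emph{define} $D^{>t}:=\{x\in D:\tau_{\le t}(x)=0\}$ and verify the four axioms of an $m$-structure on the pair $(D^{\le t},D^{>t})$.

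The easy axioms come first. Since $i$ is fully faithful, the unit $\id\to\tau_{\le t}\circ i$ is a natural isomorphism, and hence for $a\in D^{\le t}$, $b\in D^{>t}$ adjunction gives $\hom(a,b)=\hom(a,\tau_{\le t}b)=\hom(a,0)=0$, establishing orthogonality. As $D^{\le t}$ is a triangulated subcategory of $D$, shift-stability in both directions for $D^{\le t}$ is automatic; Karoubi-closedness for $D^{\le t}$ holds because membership is equivalent to the counit $\tau_{\le t}x\to x$ being an isomorphism, a condition stable under passage to direct summands.

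The heart of the argument is the decomposition. For $c\in D$, form the triangle
\[
\tau_{\le t}c \xrightarrow{\eta_c} c \to b \to \tau_{\le t}c[1]
\]
by completing the counit $\eta_c$, and one must show $b\in D^{>t}$. For arbitrary $a\in D^{\le t}$, applying $\hom(a,-)$ yields a long exact sequence in which $\hom(a,b)$ is flanked by the maps
\[
\hom(a,\tau_{\le t}c)\to \hom(a,c),\qquad \hom(a,\tau_{\le t}c[1])\to \hom(a,c[1]),
\]
both given by post-composition with (a shift of) the counit $\eta_c$. Using the shift equivalence to rewrite the second as $\hom(a[-1],\tau_{\le t}c)\to \hom(a[-1],c)$, and noting $a,a[-1]\in D^{\le t}$, both maps are the adjunction isomorphisms. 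Hence $\hom(a,b)=0$ for every $a\in D^{\le t}$. Specializing to $a=\tau_{\le t}b$ gives $\hom(\tau_{\le t}b,b)=0$; under the adjunction isomorphism $\hom(\tau_{\le t}b,b)\cong \hom(\tau_{\le t}b,\tau_{\le t}b)$ the identity $\id_{\tau_{\le t}b}$ corresponds to the counit $\eta_b$, so vanishing forces $\id_{\tau_{\le t}b}=0$ and $\tau_{\le t}b=0$, i.e.\ $b\in D^{>t}$.

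The computation above simultaneously identifies $D^{>t}=(D^{\le t})^\perp$, so $D^{>t}$ is itself a triangulated subcategory closed under direct summands, supplying the remaining axioms. The main point demanding care is the identification of the connecting maps in the long exact sequence with the adjunction isomorphisms; this rests on the fact that $\tau_{\le t}$ commutes with shifts, a consequence of the uniqueness of adjoints applied to $i\circ[n]=[n]\circ i$.
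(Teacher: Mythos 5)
Your proof is correct, but the decomposition step differs from the paper's. The paper appeals to the fact that a right adjoint of a triangulated functor is itself triangulated (citing Neeman, Theorem 5.3.6), applies $\tau_{\le t}$ to the triangle $\tau_{\le t}h\to h\to \tau_{>t}h\to$, observes the first arrow becomes the identity, and concludes $\tau_{\le t}(\tau_{>t}h)=0$. You instead run the long exact sequence for $\hom(a,-)$, identify the flanking maps with the adjunction isomorphisms, deduce $\hom(a,b)=0$ for every $a\in D^{\le t}$, and then specialize $a=\tau_{\le t}b$ to force $\tau_{\le t}b=0$. Both are sound; the paper's route is shorter but leans on an external structural fact about adjoints of exact functors, while yours is more elementary and has the side benefit of exhibiting directly that $D^{>t}=(D^{\le t})^\perp$ (which the paper records separately as Remark~\ref{otherIsPerp}). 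One small note: the closing remark that your argument ``rests on $\tau_{\le t}$ commuting with shifts'' slightly oversells the dependence --- once you rewrite $\hom(a,\tau_{\le t}c[1])\to\hom(a,c[1])$ as $\hom(a[-1],\tau_{\le t}c)\to\hom(a[-1],c)$, that map is literally the adjunction isomorphism for $a[-1]\in D^{\le t}$, so no compatibility of $\tau_{\le t}$ with the shift is actually invoked. You also explicitly address Karoubi-closedness of $D^{\le t}$ via the counit criterion, a point the paper glosses over.
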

\begin{proof}
	Given the $m$-structure, $\tau_{\le t}$ is the truncation to the negative part, which is the right adjoint to the natural inclusion since this is a $t$-structure. Conversely, given a right adjoint $\tau_{\le t}:D\rightarrow D^{\le t}$, let $D^{> t}:=\{x\big|\tau_{\le t}(x)=0\}$. Then orthogonality, and invariance are obvious. As for decomposition, for any $h$, let $\tau_{\le t}h\rightarrow h$ denote the natural morphism of adjunction, and let $\tau_{>t}h$ be defined as the third part of the cone. Then it is enough to verify that $\tau_{\le t}(\tau_{>t}h)=0$. But the functor $\tau_{\le t}$ is triangulated since it is adjoint to a triangulated functor \cite[5.3.6]{neeman2014triangulated}, and applying it to the triangle, $\tau_{\le t}h\rightarrow h\rightarrow \tau_{>t}h\rightarrow $, the first map reduces to identity, and we see that $\tau_{\le t}(\tau_{>t}h)=0$ as required.
\end{proof}%
\begin{remark}
	It is possible to work with a $t$-structure instead of an $m$-structure for the previous proposition.
\end{remark}

\begin{para}
	A frequent approach to constructing a right adjoint to a triangulated functor $i:D^{\le t}\rightarrow D$ is to use a form of Brown representability -- if the category $D^{\le t}$ is compactly generated and $D$ has small direct sums, then the existence of adjoint is automatic \cite[8.4.4.]{neeman2014triangulated}. However this adjoint is unwieldy, and the real hard work is to show that it preserves compact objects (which, in the motivic categories, correspond to the geometric or constructible objects).

	Instead, in our approach, we focus on triangulated categories which may not have arbitrary direct sums. Punctual gluing still works, and we automatically get a $t$-structure and hence an adjoint functor. This has the added advantage that constructions are somewhat explicit and have good properties with respect to the compact objects.  It is not any weaker than the other approach - this is the content of \ref{compactToNonCompact} and \ref{compactAreGenerated}, and is explained below.
\end{para}

%

Following definitions will be useful:
\begin{definition}
	Let $D$ be a pseudo-abelian triangulated category. Let $A, B, H\subset D$ be full subcategories containing $0$ which are isomorphism closed -- that is if $x\cong y$ for some $x\in D$ and $y\in H$ (resp.  $y\in A$, resp. $y\in B$) then $x\in H$ (resp. $x\in A$, resp. in $x\in B$).
	Then we define the following full subcategories:
	\begin{align*}		
		Ext^1(B,A)	&:= \{ d\in D \big|\exists a\rightarrow d\rightarrow b\rightarrow  \text{ distinguished }\text{with }a\in A,b\in B\} \hspace{-40em}\\
	Ext^1(H) &:= Ext^1(H,H)	& 
		Ext^n(H)	&:= Ext^1(Ext^{n-1}(H))\;\forall n\ge 2\\
	Ext(H)&:=\cup_{i\ge 1}{Ext^i(H)} &
		H^\infty	&:= \{\oplus_{i\in I} h_i\big|h_i\in H, I\text{ arbitrary small }\}\\	
	Ext^\infty (H) &:=Ext(H^\infty)  &
		H[\Z] &:= \{ h[n]\big| h\in H,n\in \Z\}\\
	\mathcal R(H)&:=\{d\in D \big| d\text{ is a retract of some }h\in H\} \hspace{-40em}\\
	\vvspan{H} &:= \mathcal R(Ext(H[\Z])) &
		\vvtspan{H} &:= Ext^\infty(H[\Z])\\
		A^\perp &:= \{d\in D\mid \hom(a,d)=0,\forall a\in A\} & ^\perp A &:= \{d\in D\mid \hom(d,a)=0,\forall a\in A\}		
	\end{align*}
	(Here $H^\infty, Ext^\infty, \vvtspan{H}$ are defined when $D$ has small direct sums). Also say:
	\begin{align*}
		&\text{$A\perp B$ if $A^\perp\supset B \Leftrightarrow A\subset {^\perp B}$.} &  &\text{$(A,B)$ \emph{decompose} $H$ if $H\subset Ext^1(B,A)$.}\\
		&\text{$D$ is \emph{finitely generated by} $H$ if $D=\vvspan H $.} & &\text{$D$ is \emph{generated by} $H$ if $D=\vvtspan H $.}
	\end{align*} 
\end{definition}


\begin{remark}
	In some places in literature (e.g. in \cite{hebert2011structure}), $Ext^\infty(H)$ is defined as the full subcategory $Ext(Ext(H)^\infty)$. The two definitions are equivalent: $Ext(H)^\infty \subset Ext(H^\infty)$ since an arbitrary co-product of distinguished triangles is a distinguished triangle, and hence $Ext(Ext(H)^\infty) \subset Ext(Ext(H^\infty))=Ext(H^\infty)$. $Ext(H^\infty)\subset Ext(Ext(H)^\infty)$ since $H\subset Ext(H)$, giving the other inclusion. 
\end{remark}
\begin{remark}	
 	$\mathcal R(\vvtspan H)=\vvtspan H$ because the image of a projector can be constructed using homotopy colimits (see \cite[1.6.8]{neeman2014triangulated}) and $\vvtspan H $ is closed under countable direct sums and cones, hence under homotopy colimits.
\end{remark}
\begin{remark}\label{otherIsPerp}
	For both $t$-structures and weight structures $(D^{\le t})^\perp = D^{> t}$, and $D^{\le t} = {^\perp (D^{>t})}$ -- for $t$-structures this is the calculation in the proof of \ref{tIsAdjoint}, while for weight structures see \cite[1.3.3]{bondarko_weights}.
\end{remark}

If we can construct a $m$-structure on $\vvspan H$ we can do so on $\vvtspan H$, provided $H$ consists of compact objects:
\begin{proposition}\label{compactToNonCompact}
		Let $(A,B)$ form a $m$-structure on $H$ and assume that $A$ consists of compact objects. Then $(Ext^\infty(A), Ext^\infty(B))$ forms a $m$-structure on $Ext^\infty(H)$.
\end{proposition}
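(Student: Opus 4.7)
The plan is to verify the three defining axioms of an $m$-structure for the pair $(Ext^\infty(A), Ext^\infty(B))$: both parts should be triangulated subcategories of $Ext^\infty(H)$, they should be orthogonal, and every object of $Ext^\infty(H) = Ext(H^\infty)$ should admit a decomposition. The first axiom is purely formal: since $A$ is a triangulated subcategory of $H$, it is shift-closed, hence so is $A^\infty$ because shifts commute with coproducts. By induction on the $Ext$-tower, each $Ext^n(A^\infty)$ is shift-closed, and since it is closed under extensions by construction, it is closed under cones by rotating triangles; thus $Ext^\infty(A) = Ext(A^\infty)$ is a triangulated subcategory of $D$, and likewise for $B$ and for $H$.

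Orthogonality is where the compactness hypothesis on $A$ enters, in an asymmetric way. Starting from $A \perp B$, compactness of $a \in A$ yields $\hom(a, \oplus_j b_j) \cong \oplus_j \hom(a, b_j) = 0$, so $A \perp B^\infty$. A straightforward induction on the $Ext$-level, using the long exact sequence of $\hom(a,-)$ applied to a defining triangle of an object in $Ext^n(B^\infty)$, then gives $A \perp Ext^\infty(B)$. To extend on the left, observe that $\hom$ turns coproducts in the first variable into products, so $A^\infty \perp Ext^\infty(B)$; a second induction on the $Ext$-level (using $\hom(-,b)$ applied to triangles in $Ext^n(A^\infty)$) upgrades this to $Ext^\infty(A) \perp Ext^\infty(B)$.

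The decomposition property is established by showing the class $\mathcal{C}$ of objects of $Ext^\infty(H)$ admitting a decomposition $a \to c \to b$ with $a \in Ext^\infty(A)$, $b \in Ext^\infty(B)$ contains $H^\infty$ and is closed under extensions. For $c = \oplus_i h_i \in H^\infty$ one takes the direct sum of the given decompositions $a_i \to h_i \to b_i$, which remains distinguished because coproducts of distinguished triangles are distinguished. For extension-closure, given a triangle $h' \to c \to h''$ with decompositions $a' \to h' \to b'$ and $a'' \to h'' \to b''$, I would run the standard $t$-structure octahedron argument: form the cone $X$ of the composition $a' \to h' \to c$ to obtain a triangle $b' \to X \to h''$, use the orthogonality $\hom(a'', b'[1]) = 0$ just proved to lift $a'' \to h''$ to a map $a'' \to X$, take its cone $b$ (which sits in a triangle $b' \to b \to b''$ and hence lies in $Ext^\infty(B)$), form the composition $c \to X \to b$, and let $a$ be its fibre. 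A final octahedron places $a$ in a triangle $a' \to a \to a''$, so $a \in Ext^\infty(A)$, completing the decomposition of $c$.

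The main obstacle is the orthogonality step: compactness of $A$ is indispensable because $\hom(a,-)$ must commute with coproducts, while no corresponding hypothesis is available on $B$, forcing the two inductions on the $Ext$-tower to be carried out in a specific order. Once orthogonality is in hand, the triangulated-subcategory verification and the decomposition reduce, respectively, to formal stability of $Ext$ under shifts and to a standard octahedron-based construction.
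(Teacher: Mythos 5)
Your proposal is correct and follows essentially the same route as the paper: compactness of $A$ to pass orthogonality through coproducts, induction up the $Ext$-tower for orthogonality, and the standard octahedron/$3\times 3$ construction (hinging on $\hom(a'',b'[1])=0$) to decompose an extension from decompositions of its two ends. The only difference is organizational — you establish the full orthogonality $Ext^\infty(A)\perp Ext^\infty(B)$ first and then show the class of decomposable objects is extension-closed, while the paper interleaves the two inductions level by level.
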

\begin{proof}
	Since $A$ consists of compact objects $A\perp B\Rightarrow A \perp B^\infty$. It follows that $A^\infty \perp B^\infty$. It is easy to see that $A'\perp B'\Rightarrow  Ext^1(A')\perp B'$ as well as $A'\perp Ext^1(B')$, and hence inductively, $Ext^n(A')\perp Ext^n(B')$. Since $Ext^n(-)\subset Ext^{n+1}(-)$, inductively, $Ext(A')\perp Ext(B')$, and applying to $A' = A^\infty$ and $B' = B^\infty$, we see the orthogonality of $(Ext^\infty(A), Ext^\infty(B))$. 
	
	Invariance is obvious since $Ext(-)$ and $(-)^\infty$ commute with shifts.
	
	For decomposition, note that $H\subset Ext^1(B,A)$, then $H^\infty \subset Ext^1(B^\infty, A^\infty)$, since arbitrary direct sum of distinguished triangles is a distinguished triangle. Also, if $A\perp B[1]$ and $H\subset Ext^1(B,A)$, then $Ext^1(H) \subset Ext^1(Ext^1(B), Ext^1(A))$ as is easily seen by the following diagram:
\[
\xymatrix{
a\ar@{..>}[d]\ar@{..>}[r] & h\ar[d]\ar@{..>}[r] & b\ar@{..>}[d]\ar@{..>}[r] & a[1]\ar@{..>}[d]\\
a_2\ar[r]\ar@{-->}[d] & h_2\ar[r]\ar[d] & b_2\ar[r]\ar@{-->}[d] & b[1]\ar@{-->}[d]\\
a_1[1]\ar[r] & h_1[1]\ar[r] & b_1[1]\ar[r] & a[2]
}
\]
	Here (bottom) dashed arrows exist, since $\hom(a_2,b_1[1]) =0$. Defining $a$ by the triangle $a\rightarrow a_2\rightarrow a_1[1]\rightarrow $ and similarly for $b$, we complete the diagram using dotted arrows. Hence $a\in Ext^1(A)$, $b\in Ext^1(B)$, and so $Ext^1(H)$ is decomposed by $(Ext^{1}(A),Ext^{1}(B))$. 
	
	Now claim for $Ext^n$, hence also $Ext$, follows by induction since $A\perp B[1]$ implies $Ext^1(A)\perp Ext^1(B)[1]\subset Ext^1(B[1])$. Applying to $A^\infty$ and $B^\infty$ we get the decomposition as required.

\end{proof}
\begin{remark}
	It is enough to work with a weight structure instead of an $m$-structure for the previous proposition.
\end{remark}
In the motivic setting, it is standard to lay down results in the situation of (subcategories) of the form $\vvtspan S$ where $S$ consists of compact objects. Then the claim that an $m$-structure restricts to compact objects is equivalent to saying that we have a $m$-structure on $\vvspan S$ by the previous proposition and the following:
\begin{proposition}\label{compactAreGenerated}
	Assume that $D=\vvtspan S$ and that the objects in $S$ are compact. Then the full subcategory of compact objects in $D$ can be identified with $\vvspan S$.
\end{proposition}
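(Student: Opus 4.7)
The plan is to prove the two inclusions $\vvspan S \subset D^c$ and $D^c \subset \vvspan S$ separately, where $D^c$ denotes the full subcategory of compact objects of $D$.

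For the easy inclusion $\vvspan S \subset D^c$, I would note that $D^c$ is a thick subcategory of $D$: closed under shifts (trivially), under cones (from the five-lemma applied to the long exact sequence $\hom(-, \bigoplus_i y_i)$), and under retracts (a retract of a compact is compact). Since $S \subset D^c$ by hypothesis and $\vvspan S = \mathcal{R}(Ext(S[\Z]))$ is, by construction, the smallest thick subcategory of $D$ containing $S$, the inclusion follows at once.

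For the hard inclusion $D^c \subset \vvspan S$, I would invoke the standard Neeman-style argument for compactly generated triangulated categories (cf.~\cite{neeman2014triangulated}). Given a compact $c \in D$, the goal is to build a sequential tower $0 = x_0 \to x_1 \to x_2 \to \cdots$ of objects in $\vvspan S$, equipped with compatible maps $x_n \to c$, so that $c \iso \mathrm{hocolim}\, x_n$. One constructs $x_{n+1}$ from $x_n$ by taking the cone of the sum of all maps $\bigoplus_\alpha s_\alpha[k_\alpha] \to \mathrm{cone}(x_n \to c)$ indexed by the maps from shifts of objects of $S$, so that every such map is trivialised in the limit; the generation hypothesis $D = \vvtspan S$ guarantees the resulting homotopy colimit is $c$. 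Once this is in place, compactness of $c$ gives
\[
\hom(c,c) = \hom(c, \mathrm{hocolim}\, x_n) = \colim_n \hom(c, x_n),
\]
so $\mathrm{id}_c$ factors through some $x_N$, exhibiting $c$ as a retract of $x_N \in \vvspan S$. Because $\vvspan S$ is retract-closed (the outer $\mathcal{R}$ in its definition), $c \in \vvspan S$ as required.

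The main obstacle is the tower construction in the last step: everything else is formal bookkeeping with thick subcategories and the definition of compactness. This is precisely where the generation hypothesis $D = \vvtspan S$ is used, and it amounts to the standard small-object-type argument in Neeman's theory, which carries over verbatim to the present setting.
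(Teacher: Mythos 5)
Your easy inclusion is fine, and your overall strategy is the right one --- the paper itself simply cites \cite[4.4.5]{neeman2014triangulated} for the whole statement, so you are in effect reproving Neeman's theorem. But the hard inclusion as you have written it has a genuine gap, and it sits exactly where Neeman's lemma does its real work. The objects $x_{n+1}$ in your tower are cones on maps out of $\bigoplus_\alpha s_\alpha[k_\alpha]$, where $\alpha$ ranges over \emph{all} maps from shifts of objects of $S$ into $\mathrm{cone}(x_n\to c)$; this index set is infinite in general, so $x_{n+1}$ lies in $\vvtspan{S}$ but not in $\vvspan{S}$, which by the paper's definitions only allows finite direct sums. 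Consequently the conclusion ``$c$ is a retract of $x_N\in\vvspan S$'' does not follow: what your argument actually yields is that $c$ is a retract of an object of $\vvtspan S$, which is vacuous since $c\in D=\vvtspan S$ to begin with.

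The missing step is the finiteness reduction: one must show, by induction on $n$ and using compactness of $c$ \emph{and of the objects of $S$}, that the factorization $c\to x_N\to c$ of $\mathrm{id}_c$ can be pushed down to a factorization through an object built from \emph{finite} coproducts of shifts of objects of $S$ in $N$ extension steps. Concretely, a map from a compact object into an arbitrary coproduct factors through a finite subcoproduct, and one propagates this through the cone construction with octahedra, replacing each $\bigoplus_\alpha s_\alpha[k_\alpha]$ by a finite sub-sum large enough to receive all the relevant maps at each stage. This is precisely the content of \cite[4.4.5]{neeman2014triangulated} (the Thomason--Neeman argument), and it is the second, essential place where the hypothesis that $S$ consists of compact objects enters. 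Without this step your sketch establishes only the trivial containment; with it, the proof is complete and agrees with the result the paper invokes.
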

\begin{proof}
	This follows immediately from \cite[4.4.5]{neeman2014triangulated}.
\end{proof}

\section{Punctual Gluing}\label{sec:gluing}%
 A general procedure of gluing $t$-structures, which also works for co-$t$-structures, was laid down in \cite{BBD} -- if $D(X)$ denotes the (bounded) triangulated category of sheaves on $X$, then one can construct a  $t$-structure on $D(X)$ given a $t$-structure each on $D(Z)$ and $D(U)$ where $Z\subset X$ is closed and $U$ is its open complement. This can be generalized to yield gluing on any fixed stratification by induction. Under fairly mild conditions (``constructibility'' of objects under consideration), one can construct a stratification independent version of it (which was implicitly used in \emph{loc.~cit.} to construct the perverse $t$-structure). 

While constructibility is usually formulated as property of an individual sheaf, it is better seen to manifest as ``continuity'', a property of the category of constructible sheaves. Through a simple argument, it is possible to show that, in presence of continuity, to define a $t$-structure on $D(X)$ one only needs a $t$-structure on the triangulated categories $D(x)$ for $x$ any (Zariski) point of $X$. This is made precise below.

We begin with the following situation, which is standard:
	\begin{definition}[Grothendieck's four functors]\label{gluing:4functors}
		Given a scheme $X$ let $Sub(X)$ be the category of sub-schemes of $X$ (i.e.. schemes $f:Y\rightarrow X$, $f$ an locally-closed immersion as objects and obvious morphisms). In particular, all morphisms in $Sub(X)$ are immersions, of finite type. 
		
		We say \emph{formalism of Grothendieck's four functors} holds if:
		\begin{enumerate}[i.]
			 \item For any $Y$ in $Sub(X)$ we are given a triangulated category $D_Y$. 
			 \item For any morphism $f:Y\rightarrow Z$ in $Sub(X)$, we are given adjoint functors
			 	\begin{align*}
					f^*: D_Y \leftrightarrows D_Z : f_* & &\text{ and }& &
					f_!: D_Y \leftrightarrows D_Z : f^!
				\end{align*}
				such that there are isomorphisms of functors:
				\begin{align*}
					(f g)_* \iso f_*g_* & & (f g)_! \iso f_!g_! & & (f g)^* \iso g^*f^* & & (f g)^! \iso g^!f^!
				\end{align*}
				Also if $1:X\rightarrow X$ is the identity morphism, and $\id:D_X\rightarrow D_X$ denotes the identity natural transformation. Then we must have an isomorphism of functors:
				\begin{align*}
					1_* \iso 1_! \iso 1^* \iso 1^! \iso \id
				\end{align*}
				
			\item We are given natural transformations
				\begin{align*}
					f^!	\rightarrow f^* & & f_!	\rightarrow f_*
				\end{align*}
				where the first is an isomorphism for $f$ an open immersion, and the second an isomorphism for $f$ a closed immersion.
			\item If $i:Z\rightarrow Y$ is a closed immersion with $j:U\rightarrow Y$ the open immersion of the complement, then the functors $(i^*,i_*,i^!),(j_!,j^*,j_*)$ follow the formalism of gluing \ref{gluing:openClosed}.
		\end{enumerate}
	\end{definition}
	\begin{definition}[Formalism of gluing]\label{gluing:openClosed}
		The triangulated functors:
			\begin{align*}
				j_*, j_!: D_U	\leftrightarrows D_X	: j^* & & i_*: D_Z	\leftrightarrows D_X	: i^*, i^!
			\end{align*}
		between triangulated categories $D_U,D_Z,D_X$ are said to satisfy \emph{formalism of gluing} if:
		\begin{enumerate}[i.]
			\item $(j_!, j^*)$, $(j^*, j_*)$, $(i^*, i_*)$ and $(i_*, i^!)$ are adjoints tuples.
			\item $j^*i_*=0$ and hence by adjunction $i^*j_! = 0$ as well as $i^!j_* = 0$.
			\item (Localization)  $\forall A\in D_X$, we have functorial distinguished triangles:
			\begin{align*} j_!j^*A\rightarrow A\rightarrow i_*i^*A\rightarrow & & i_*i^!A\rightarrow A\rightarrow j_* j^*A\rightarrow \end{align*}
			where the morphisms come from adjunction.
			\item We have isomorphism of functors
			\begin{align*} \forall A\in D_U,\;j^*j_*A\xrightarrow\cong A\xrightarrow\cong j^*j_!A & & \forall B\in D_Z,\;i^*i_* B\xrightarrow\cong B\xrightarrow\cong i^!i_*B  \end{align*}
			with the morphisms coming from adjunction.
		\end{enumerate}
	\end{definition}
	We will need $f^!$ and $f^*$ defined not only for locally closed immersions, but also when $f:\Spec K\hookrightarrow X$ denotes any (Zariski) point of $X$ (that is $K$ is the residue field of $x\in X$, and we consider the induced map $\Spec K\rightarrow X$). While, in the situations we intend to apply this, $f^*$ is a given, $f^!$ needs to be defined by hand. We make this precise below:
	\begin{definition}[Extended formalism of gluing]\label{gluing:extended4functors}
		Let $X$ be a scheme and let us assume that Grothendieck's four functors \ref{gluing:4functors} exist. $f_Y:Y\rightarrow X$ will denote the natural immersion for any $Y\in Sub(X)$. The situation is said to satisfy \emph{extended formalism of gluing} if the following happens:
		
		Assume that for each Zariski point $\Spec k\hookrightarrow X$ we are given a triangulated category $D(k)$. Let $y=\Spec k$ denote the corresponding point in $X$ and let $Y=\overline {\{y\}}$. Let $Z\in Sub(X)$ be such that $y\in Z$ as well. Let $\epsilon_Z:\Spec k\hookrightarrow Z$ denote the natural morphism. Assume that in such a situation we are given a functor
			\begin{align*}
				\epsilon_Z^*:D_Z\rightarrow D(k)&  &\text{ such that }\epsilon_Z^* = \epsilon_{Z'}^{*}\circ f^*\text{ for all }\Spec k\xrightarrow{\epsilon_{Z'}} Z'\xrightarrow f Z\text{ factoring }\epsilon_Z
			\end{align*}

		Since $\overline{\{y\}} = Y$ therefore $Y\subset \bar Z$ and $i:Y\cap Z\hookrightarrow Z$ is a closed immersion. Let $\epsilon_{Y\cap Z}:\Spec k\rightarrow Y\cap Z$ be the natural morphism. Then we define:
		\[
			\epsilon_Z^! := \epsilon_{Y\cap Z}^*\circ i^!: D_Z\rightarrow D_{Y\cap Z}\rightarrow D(k)
		\]
		Notice that if $\Spec k\xrightarrow{\epsilon_{Z'}} Z'\xrightarrow f Z$ factors $\epsilon_Z$, with $i':Y\cap Z'\hookrightarrow Z'$ being the immersion, then $Y\cap Z\subset Y=\bar y$ lies in closure of $Y\cap Z'$, since $y\in Y\cap Z'$. Since $Y'\cap Z\subset Y\cap Z$ is also locally closed, it must be open in $Y\cap Z$. It follows that if $j:Y\cap Z'\rightarrow Y\cap Z$ denote the immersion, $j^*=j^!$, and hence we have a canonical identification of functors: 
		\[
			\epsilon_Z^! = \epsilon_{Y\cap Z}^*\circ i^! = (\epsilon_{Y\cap Z'}^*\circ j^*)\circ i^! = \epsilon_{Y\cap Z'}^*\circ (i\circ j)^! =\epsilon_{Y\cap Z'}^*\circ (f\circ i')^!=\epsilon_{Y\cap Z'}^*\circ i'^!\circ f^!=\epsilon_{Z'}^!\circ f^!
		\]
	\end{definition}
	
	Finally, we make the essential definitions:
	\begin{definition}[Continuity]\label{gluing:continuity}
		Assume the situation in \ref{gluing:extended4functors}.  Let $\epsilon_Y:\Spec k\hookrightarrow X$ be a Zariski point in $X$ with closure $Y$. The situation is said to satisfy \emph{continuity} if for any such $\epsilon_Y$:
		\begin{enumerate}[i.]
			\item (Essentially surjective) Let $a\in D(k)$. 
			Then $\Spec k$ has a neighborhood $\Spec k\xrightarrow{h}U\subset Y$  open dense in $Y$ and an object $\bar a\in D_U$ such that $h^*(\bar a) = a$.
			\item (Full) Let $a, b\in D_Y$. Then for any morphism $\alpha\in \hom(\epsilon_Y^*a,\epsilon_Y^*b)$, there is a $\Spec k\xrightarrow{g} U\xrightarrow{f} Y$ open dense, and a map $\bar\alpha\in \hom(f^*a, f^*b)$, such that $\alpha = g^*\bar\alpha$.
			\item (Faithful) Let $a, b\in D_Y$. Then for any morphism $\bar\alpha\in \hom(a,b)$, such that $\epsilon^*(\bar \alpha)=0$, there is a $\Spec k\xrightarrow{g} U\xrightarrow{f} Y$ open dense, $f^*(\bar\alpha)=0$.
		\end{enumerate}
	\end{definition}
	\begin{definition}[Punctual gluing]\label{gluing:spreadingOut}
		Assume that for each $\Spec k\hookrightarrow X$, we are given a $t$-structure (respectively, a weight structure) $(D^{\le}(k), D^{>}(k))$ on the category $D(k)$. For any $U\in Sub(X)$, define
		\begin{align*}
			D^{\le}(U)	:=\{a&\in D_U\big| \epsilon^*(a)\in D^{\le}(k)\text{ for }\epsilon:\Spec k\rightarrow U\text{ any point of }U\} \\
			D^{>}(U)	:=\{a&\in D_U\big| \epsilon^!(a)\in D^{>}(k)\text{ for }\epsilon:\Spec k\rightarrow U\text{ any point of }U\}
		\end{align*}
		as full subcategories. In particular if $f:S\hookrightarrow T$ is an immersion, $S, T\in Sub(X)$:
		\begin{align*}
			f^*(D^{\le}(T))\subset D^{\le}(S)& &f^!(D^{>}(T))\subset D^{>}(S)
		\end{align*}
	\end{definition}
	\begin{definition}[Continuity for $t$-structures and weight structures]\label{gluing:continuityForT}
		Assume the situation of \ref{gluing:spreadingOut} with $(D^{\le}(k), D^{>}(k))$ a $t$-structure (respectively a weight structure) on $D(k)$. Let $\epsilon_Y:\Spec k\hookrightarrow X$ be a Zariski point in $X$ with closure $Y$. 
%
		The situation is said to satisfy \emph{continuity of $t$-structures} (respectively \emph{continuity of weight structures}) if for any such $\epsilon_Y$:
		\begin{enumerate}[i.]
			\item (Continuity for the negative part) For any $a\in D^{\le}(k)$, there is a neighborhood $U$ of $\Spec k$ in $Y$, $\Spec k\xrightarrow{f}U\subset Y$ and an object $\bar a\in D^{\le}(U)$ with $f^*\bar a = a$.
			\item (Continuity for the positive part) For any $a\in D^{>}(k)$, there is a neighborhood $U$ of $\Spec k$ in $Y$, $\Spec k\xrightarrow{f}U\subset Y$ and an object $\bar a\in D^{>}(U)$ with $f^!\bar a = a$.
		\end{enumerate}
	\end{definition}

	We are now in a position to prove our main result:
	\begin{theorem}[Punctual gluing]\label{gluing:mainresult}
		Assume continuity \ref{gluing:continuity} and continuity of $t$-structure (respectively, weight structure) \ref{gluing:continuityForT} on a Noetherian scheme $X$. Then $(D^{\le}(X), D^{>}(X))$ is a $t$-structure (respectively, a weight structure) on $D_X$.
	\end{theorem}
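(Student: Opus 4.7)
The proof proceeds by Noetherian induction on $X$: assume the theorem on every proper closed $Z\subsetneq X$ and verify the axioms on $X$. Invariance/co-invariance and (in the weight case) Karoubi-closure are immediate, because $\epsilon^*$ and $\epsilon^!$ are additive triangulated functors and the defining conditions of $D^{\le}(X)$ and $D^{>}(X)$ are imposed pointwise in each $D(x)$, where the requisite stability is part of the pointwise structure.

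\emph{Orthogonality.} Given $f\colon a\to b$ with $a\in D^{\le}(X)$, $b\in D^{>}(X)$, pick a generic point $\eta$ of $X$. Let $X_\eta\subset X$ be the open subscheme on which $\eta$ is the unique generic point. Applying the composition clause of \ref{gluing:extended4functors} to $\Spec k(\eta)\to X_\eta\hookrightarrow X$ and using $j^!=j^*$ for open immersions forces $\epsilon_\eta^!=\epsilon_\eta^*$ on $D_X$; hence $\epsilon_\eta^*a\in D^{\le}(\eta)$ and $\epsilon_\eta^*b\in D^{>}(\eta)$ are pointwise orthogonal, giving $\epsilon_\eta^*(f)=0$. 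Faithfulness in \ref{gluing:continuity} then makes $f$ vanish on a dense open of $\overline{\{\eta\}}$; intersecting with $X_\eta$ yields an open neighborhood of $\eta$ in $X$ on which $f$ restricts to zero. Collecting over all generic points gives a dense open $U\subset X$ with $j^*f=0$. The localization triangle factors $f$ through $i_*i^*a\to b$ on $Z=X\setminus U$, adjunct to a morphism $i^*a\to i^!b$ in $D_Z$; the compatibilities in \ref{gluing:extended4functors} place $i^*a\in D^{\le}(Z)$ and $i^!b\in D^{>}(Z)$, so the inductive hypothesis on $Z$ kills this map and $f=0$.

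\emph{Decomposition on a dense open.} For each generic $\eta$, decompose $\epsilon_\eta^*c$ as $a_\eta\to\epsilon_\eta^*c\to b_\eta$ in $D(\eta)$. Use the negative-part clause of \ref{gluing:continuityForT} to lift $a_\eta$ to some $\bar a\in D^{\le}(V)$, and fullness in \ref{gluing:continuity} to lift $a_\eta\to\epsilon_\eta^*c$ to a map $\bar a|_{V'}\to c|_{V'}$ with cone $\bar b'$. Independently, the positive-part clause of \ref{gluing:continuityForT} lifts $b_\eta$ to some $\bar b\in D^{>}(W)$; fullness plus faithfulness then upgrade the pointwise isomorphism $\epsilon_\eta^*\bar b'\cong b_\eta\cong\epsilon_\eta^*\bar b$ to an isomorphism $\bar b'|_{U_\eta}\cong\bar b|_{U_\eta}$ on a smaller open, placing $\bar b'|_{U_\eta}$ in $D^{>}(U_\eta)$. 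Since the opens $X_\eta$ are pairwise disjoint, the $U_\eta$ assemble into a dense open $U\subset X$ on which $c|_U$ has a decomposition $a_U\to c|_U\to b_U$.

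\emph{Extension across $Z$; main obstacle.} Let $C$ be the fiber of the adjunct $c\to j_*b_U$; using $j^*j_*=\id$, $j^*i_*=0$, and $i^!j_*=0$, one obtains $j^*C\cong a_U$ and $i^!C\cong i^!c$. By the inductive hypothesis on $Z$, decompose $i^*C$ as $\mu\to i^*C\to\nu$ with $\mu\in D^{\le}(Z)$ and $\nu\in D^{>}(Z)$, and let $A$ be the fiber of the adjunct $C\to i_*\nu$; then $j^*A\cong a_U$ and $i^*A\cong\mu$ exhibit $A\in D^{\le}(X)$. Letting $B$ be the cone of the composite $A\to C\to c$, the octahedral axiom produces a triangle $i_*\nu\to B\to j_*b_U\to$ in which both outer terms lie in $D^{>}(X)$ (via $j^*i_*=i^!j_*=0$ and $j^*j_*=i^!i_*=\id$), so $B\in D^{>}(X)$ by pointwise extension-closure. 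Hence $A\to c\to B\to$ is the desired decomposition. The technical heart of the argument is the construction on the dense open: promoting a single pointwise decomposition in $D(\eta)$ to a distinguished triangle on an open neighborhood whose cone is provably in $D^{>}$ requires all three clauses of \ref{gluing:continuity} to cooperate with both halves of \ref{gluing:continuityForT}; once that is in place, the BBD-style extension across the closed complement above is formal.
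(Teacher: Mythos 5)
Your proof is correct and follows essentially the same route as the paper's: the same generic-point-plus-Noetherian-induction argument for orthogonality, and the same BBD-style construction for decomposition (your $C$, $A$, $B$ are the paper's $u$, $v$, $w$, with $\nu=\tau^Z_{>}i^*u$). The only cosmetic differences are that you assemble the dense open over all generic points at once, use the $j_!$-localization triangle for $a$ rather than the $j_*$-triangle for $b$ in the orthogonality step, and verify $B\in D^{>}(X)$ via extension-closure of $D^{>}(k)$ rather than by computing $i^!B$ and $j^*B$ directly.
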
		
	\begin{proof}
			(Karoubi-closed) Since both $f^*$ and $f^!$ preserve direct sums, the claim is immediate since $D^{\le}(k)$ resp. $D^>(k)$ are Karoubi-closed by assumption. 	
	
			(Orthogonality) Let $a\in D^{\le}(X)$ and $b\in D^{>}(X)$. We do a Noetherian induction on $X$. The base case is when $X=\Spec k$, in which case it follows since $D^{\le}(k)\perp D^{>}(k)$. Let $\eta:\Spec K\rightarrow X$ be a generic point of $X$. Then $\hom(\eta^*(a), \eta^*(b)) = 0$ by definition, using  $D^{\le}(K)\perp D^{>}(K)$ (notice that $\eta^!:=\eta^*$ in this case, by definition \ref{gluing:extended4functors}). Let $f\in \hom(a,b)$, then since $\eta^*(f)=0$, there is a a $j:U\subset X$ open, such that $j^*(f)=0$. Since $j^*j_* = \id$, it follows that the composite: $a\xrightarrow{f} b \rightarrow j_*j^*b$ vanishes. Hence $f$ factors through the third term in the triangle, $i_*i^!b$, where $i:Z=X-U\hookrightarrow X$ is the complement. But $\hom(a,i_*i^!b)=\hom(i^*a, i^!b)=0$ by induction hypothesis and we are done.

			(Invariance/Co-invariance) Invariance or co-invariance is immediate since $f^*$, $f^!$ commute with shifts.
			
			(Decomposition) We do a Noetherian induction on $X$. The base case is when $X=\Spec k$ in which case it follows since $(D^{\le}(k),D^{>}(k))$ decompose $D(k)$.
			
			Let $h\in D_X$ and $\eta:\Spec K\rightarrow X$ be a generic point of $X$. Then, there is a triangle $a \rightarrow \eta^*h\rightarrow b \rightarrow$ with $a\in D^{\le}(K)$, and $b\in D^{>}(K)$. By continuity of $t$-structure, and restricting neighborhoods if necessary, there is a $j:U\hookrightarrow X$ open, neighborhood of $\eta$, and $\bar a\in D^{\le}(U)$, $\bar b\in D^{>}(U)$, with $a=\eta^*\bar a, b=\eta^*\bar b$. Therefore, by continuity, restricting $U$ even further if needed, we get morphisms $\bar a\rightarrow j^*h\rightarrow \bar b\rightarrow$ composing to $0$, and therefore also inducing maps $Cone(\bar a\rightarrow j^*h)\rightarrow \bar b$. This map is an isomorphism, possibly by restricting U, and hence we can even assume that $\bar a\rightarrow j^*h\rightarrow \bar b\rightarrow $ is distinguished.

			Let $i:Z\rightarrow X$ denote the complement. Consider the triangles:
			\begin{align*}
				 \text{(defining }u\text{): } & u\xrightarrow{\alpha} h\rightarrow j_*\bar b\rightarrow & \text{(t1)}\\
				 \text{(defining }v\text{): } & v\xrightarrow{\beta} u\rightarrow i_*\tau^Z_{>}i^*u\rightarrow \text{ with } \tau^Z_{>} \text{ being the truncation to }D^{>}(Z) &\text{(t2)}\\
				 \text{(defining }w\text{): } & v\xrightarrow{\alpha\beta}h\rightarrow w\rightarrow & \text{(t3)}\\
				 \text{(via octahedron axiom): } & i_*\tau^Z_{>}i^*u\rightarrow w\rightarrow j_*\bar b\rightarrow & \text{(t4)}
			\end{align*}
			where $\tau^Z_{>}$ is well defined by Noetherian induction. Then we have:
			\begin{align*}
				i^*v&\cong \tau^Z_{\le}i^*u \in D^{\le}(Z)	& &\text{ using }i^*\text{ applied to (t2)}\\
				j^*v&\cong j^*u\cong \bar a \in D^{\le}(U) & &\text{ using }j^*\text{ applied to (t2), (t1)}\\
				i^!w&\cong \tau^Z_{>}i^*u \in D^{>}(Z) 	& &\text{ using }i^!\text{ applied to (t4)}\\
				j^*w&\cong \bar b \in D^{>}(U)			& &\text{ using }j^*\text{ applied to (t4)}
			\end{align*}
			
		Since each Zariski point of $X$, lies in either $Z$ or $U$, it follows that $v\in D^{\le}(X)$ and $w\in D^{>}(X)$. Then the triangle (t3) implies decomposition of an arbitrary $h$, as required.
	\end{proof}

We prove the following result for completeness, it shows that several natural constructions correspond to a glued $t$-structure:
\begin{proposition}\label{gluing:unglue}
	Fix a scheme $X$ and assume we have extended four functors \ref{gluing:extended4functors} and continuity \ref{gluing:continuity}. Assume that for each $Y\in Sub(X)$ we have a $t$-structure (resp. a weight structure) $(D^{\le}(Y), D^{>}(Y))$ on $D_Y$ such that for any immersion $f:Y\hookrightarrow Y'$,
	\begin{align*}
		f^*(D^{\le}(Y')\subset D^{\le}(Y) & &		f^!(D^{>}(Y')\subset D^{>}(Y)
	\end{align*}
	For each $\epsilon:\Spec K\hookrightarrow X$ define:
	\begin{align*}
		D^{\le}(K) = \bigcup_Y\epsilon_Y^*(D^{\le}(Y))	& &	D^{>}(K) = \bigcup_Y\epsilon_Y^!(D^{>}(Y))
	\end{align*}
	where $\epsilon_Y:\Spec K\hookrightarrow Y\rightarrow X$ denotes any factorization of $\epsilon$ as $Y$ varies over subschemes of $X$ containing $\Spec K$.
	
	Then $(D^{\le}(K), D^{>}(K))$ forms a $t$-structure (resp. a weight structure) on $D(K)$ satisfying continuity of $t$-structures, and the $t$-structure $(D^{\le }(Y), D^{>}(Y))$ on $D(Y)$ is obtained by gluing \ref{gluing:mainresult}.
\end{proposition}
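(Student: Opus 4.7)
The plan is to first verify that $(D^{\le}(K), D^{>}(K))$ is a $t$-structure (resp.\ weight structure) on $D(K)$, next check that this family satisfies continuity of $t$-structures, and finally deduce from a uniqueness argument that the hypothesized $(D^{\le}(Y), D^{>}(Y))$ coincides with the one produced by applying \ref{gluing:mainresult} to the constructed family.

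The central reduction runs as follows. Given $a = \epsilon_Y^*\bar a$ and $b = \epsilon_{Y'}^!\bar b$ with $\bar a\in D^{\le}(Y)$ and $\bar b\in D^>(Y')$, set $Z = Y\times_X Y'\in Sub(X)$. Using that $f^*$ preserves negative parts and $f^!$ preserves positive parts, together with the functoriality of the extended $^!$ in \ref{gluing:extended4functors}, we may rewrite $a = \epsilon_Z^*\alpha$ and $b = \epsilon_Z^!\beta$ with $\alpha\in D^{\le}(Z)$, $\beta\in D^>(Z)$. Letting $Y_0\subset X$ denote the closure of $\Spec K$, the subscheme $W := Z\cap Y_0$ is locally closed in $Y_0$ and contains the generic point $\Spec K$, hence is open in $Y_0$; pulling $\alpha, \beta$ back along $W\hookrightarrow Z$ (again by the compatibility hypothesis), we may further replace $Z$ by $W$, so that $\Spec K$ is generic in $Z$ and $\epsilon_Z^! = \epsilon_Z^*$.

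With this in hand, all the axioms for $(D^{\le}(K), D^{>}(K))$ become easy consequences of continuity applied to the given structure on $D_Z$. Orthogonality: any $\phi\in\hom_{D(K)}(\epsilon^*\alpha, \epsilon^*\beta)$ lifts via fullness to $\bar\phi: f^*\alpha\to f^*\beta$ over some open dense $f:U\hookrightarrow Z$, and vanishes since $f^*\alpha\in D^{\le}(U)$ and $f^*\beta = f^!\beta\in D^>(U)$. Invariance/co-invariance is automatic. For decomposition, use essential surjectivity to write $c\in D(K)$ as $\epsilon_U^*\bar c$ for some open dense $U\subset Y_0$, decompose $\bar c$ via the given $t$-structure on $D_U$, and pull back (noting $\epsilon_U^* = \epsilon_U^!$ since $\Spec K$ is generic in $U$). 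For weight structures, Karoubi-closedness reduces analogously to lifting a $D(K)$-idempotent to a genuine idempotent on some $D_U$ via fullness and faithfulness, then splitting it there. Continuity of $t$-structures for the constructed family is then immediate: the witness $U = Y\cap Y_0$ produced by the reduction already provides $\bar a|_U\in D^{\le}(U)$, which is contained in the glued negative part at $U$.

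With the glued $t$-structure $(\tilde D^{\le}(Y), \tilde D^>(Y))$ now supplied by \ref{gluing:mainresult}, the inclusions $D^{\le}(Y)\subset\tilde D^{\le}(Y)$ and $D^>(Y)\subset\tilde D^>(Y)$ follow directly from the definitions. Equality is a routine uniqueness argument: given $a\in\tilde D^{\le}(Y)$, decompose via the hypothesized structure as $a_{\le}\to a\to a_>\to$; then $a_>\in\tilde D^{\le}(Y)\cap\tilde D^>(Y)$, using closure of $\tilde D^{\le}(Y)$ under extensions, and this intersection is trivial by orthogonality (as $\hom(x,x)=0$ forces $x=0$), so $a\in D^{\le}(Y)$. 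The main obstacle throughout is the careful bookkeeping between $\epsilon^*$ and $\epsilon^!$; the trick of intersecting with the closure $Y_0$ of the point—which forces these two functors to agree—is what ultimately makes continuity directly applicable at $\Spec K$.
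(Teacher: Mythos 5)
Your construction of the $t$-structure (resp.\ weight structure) on $D(K)$ and the verification of continuity essentially mirror the paper's; the small difference is that you carry out the reduction by intersecting $Y,Y'$ with the closure $Y_0$ of $\Spec K$ (and observing this makes $\epsilon^* = \epsilon^!$), while the paper instead shows directly that one can replace $X$ by $Y_0$ by checking that $f_!$, $f_*$ send the negative, resp.\ positive, parts over $Y\cap Y_0$ into those over $Y$ (via adjunction and the perp description of \ref{otherIsPerp}). Both routes work and buy you the same thing; yours is marginally more elementary, and your treatment of Karoubi-closedness via idempotent lifting along fullness/faithfulness is actually spelled out more carefully than in the paper.

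However, the final uniqueness step as you wrote it does not cover the weight-structure case. To conclude $a_>\in\tilde D^{\le}(Y)$ you rotate the decomposition triangle to $a\to a_>\to a_\le[1]\to$ and invoke closure of $\tilde D^{\le}(Y)$ under extensions together with $a_\le[1]\in\tilde D^{\le}(Y)$. The last membership uses $\tilde D^{\le}(Y)[1]\subset\tilde D^{\le}(Y)$, which is \emph{invariance}; for a weight structure the relevant axiom is co-invariance $\tilde D^{\le}(Y)[-1]\subset\tilde D^{\le}(Y)$, and the shift goes the wrong way. (The triviality of $\tilde D^{\le}(Y)\cap\tilde D^{>}(Y)$ by orthogonality is fine in both cases.) The paper avoids this entirely: from $D^{\le}(Y)\subset\tilde D^{\le}(Y)$ and $D^{>}(Y)\subset\tilde D^{>}(Y)$, take perps and apply \ref{otherIsPerp}, i.e.\ $\tilde D^{\le}(Y)={}^{\perp}\tilde D^{>}(Y)\subset{}^{\perp}D^{>}(Y)=D^{\le}(Y)$ and similarly for the positive parts; this one-liner is uniform across $t$-structures and weight structures and would be the clean replacement for your extension/intersection argument.
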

\begin{proof}
	\emph{$(D^{\le }(K), D^{>}(K))$ is a $t$-structure} (resp. a weight structure): 
	First note that if $Z$ is closure of $\Spec K$ in $X$, then $f^*, f^!:D^{\le }(Y)\rightarrow D^{\le }(Y\cap Z)$ (resp. $f^!:D^{> }(Y)\rightarrow D^{> }(Y\cap Z)$ is essentially surjective -- if $a\in D^{\le }(Y\cap Z)$ (resp. $a\in D^{>}(Y\cap Z)$) then if we let $b=f_!a$ (resp. $b=f_*a$), we have that $b\perp D^{>}(Y)$ (resp. $D^{\le Y}\perp B$) by adjunction and $f^*b=f^!b=b$.
	
	Therefore we can replace $X$ by $Z$ and assume that $\Spec K$ is generic in $X$ which is irreducible. $\epsilon_Y^*=\epsilon_Y^!$ since $Y$ must be open. Now we get:
	\begin{itemize}
		\item (Karaubi-closed) Clear, since $f^*, f^!$ preserve direct sums and $D^{\le}(Y)$, $D^{>}(Y)$ are Karaubi-closed.
		\item (Orthogonality) $a\in D^{\le}(K)$, $b\in D^{>}(K)$. Then $a=\epsilon_Y^*\bar a$ with $\bar a\in D^{\le}(Y)$, $b=\epsilon_{Y'}^*\bar b$ with $\bar b\in D^{>}(Y')$. Restricting to $U=Y\cap Y'$ with immersions $j:U\hookrightarrow Y$ and $j':U\hookrightarrow Y'$ we have $j^*\bar a\in D^{\le}(U)$ and ${j'}^*\bar a\in DM^{>}U$. Hence $\hom(j^*\bar a, j^*\bar b)=0$, hence $\hom(a, b)=0$ by continuity.
		\item (Invariance) Clear, since $f^*, f^!$ preserve shifts.
		\item (Decomposition) Let $h\in D(K)$, then $h=\epsilon_U^*\bar h$ for $\bar h$ in $D_U$ for some $U\subset X$ open. Then we have a decomposition $a\rightarrow h\rightarrow b\rightarrow$ with $\bar a\in D^{\le}(U)$, $\bar b\in D^{>}(U)$. Therefore it follows that $\epsilon_U^*\bar a\rightarrow \epsilon_U^*\bar h\rightarrow \epsilon_U^*\bar b\rightarrow$ is a triangle. But then first (resp. last) term lies in $D^{\le}(K)$ (resp. $D^{>}(K)$) by definition.
	\end{itemize}
	
	Finally to see that the given $t$-structure (resp. a weight structure) is a glued one, we first notice that continuity for $t$-structures (resp. weight structures) \ref{gluing:continuityForT} is obvious by the way subcategories are define. Then, gluing gives rise to a $t$-structure, let us call it $(D^{\le t_1}(X), D^{> t_1}(X))$. Then, by definition, $D^{\le}(X)\subset D^{\le t_1}(X)$ and $D^{>}(X)\subset D^{> t_1}(X)$. But then $D^{\le}(X)^\perp \supset D^{\le t_1}(X)^\perp$ and ${^\perp D^{>}(X)}\supset {^\perp D^{>t_1}(X)}$ and the claim now follows by \ref{otherIsPerp}.
\end{proof}	

\begin{remark}
	Using this it is easy to show that the perverse $t$-structure of \cite{BBD} as well as the Morel's $t$-structures as constructed in \cite{arvindVaish} (which are a mild generalization of the original construction in \cite{morelThesis}) are obtained by punctual gluing. It can also be easily seen that \ref{gluing:mainresult} can be used to give a different construction of the $t$-structures in these two cases.
\end{remark}

\section{Review of motivic sheaves}\label{sec:motives}
		In this section we review the (triangulated category) of motivic sheaves with rational coefficients, where our main applications live. We summarize the limited properties of the motivic sheaves we use below:
\begin{para}\label{motives:6functors}\label{motives:comparision}\label{motives:vanishingsForSmooth}
	Associated to any separated scheme $X$ of finite type over any field $k$ one can associate a category $DM(X)$ such that following properties hold:
	\begin{enumerate}
		 \item $DM(X)$ is a $\Q$-linear rigid tensor triangulated category which is idempotent complete, and for any morphism $f$ from $Y\rightarrow X$ we are given adjoint morphisms:
			 	\begin{align*}
					f^*: DM(X) \leftrightarrows DM(Y) : f_* & \text{ for }f\text{ arbitrary}\\
					f_!: DM(Y) \leftrightarrows DM(X) : f^! & \text{ for }f\text{ separated of finite type over }k
				\end{align*}
				with $f^*$ monoidal, and isomorphism of functors
				\begin{align*}
					(f g)_* \iso f_*g_* & & (f g)_! \iso f_!g_! & & (f g)^* \iso g^*f^* & & (f g)^! \iso g^!f^!
				\end{align*}
				whenever both sides make sense. We let $1_X$ denote the unit for the monoidal structure in $DM(X)$. In each $DM(X)$ there is an invertible (for the monoidal structure) object $1_X(1)$ (Tate motive) which is preserved under the four functors. We let $M(r):=M\otimes 1_X(1)^{\otimes r}$ for any $M$ in $DM(X)$ and any $r\in \Z$.
			\item We have a natural transformation:
				\begin{align*}
					f_!	\rightarrow f_*
				\end{align*}
				which is an isomorphism for $f$ proper.
			\item (Localization) If $i:Z\hookrightarrow X$ is a closed immersion and $j:U\hookrightarrow X$ denote the open complement, then the functors $i^*,i_*,i^!,j_!,j^*,j_*$ satisfy the formalism of gluing \ref{gluing:openClosed}.
			\item (Base change) Any cartesian square (for morphisms of finite type) leads to natural isomorphisms as follows:
				\[\begin{array}{lcl}
					\begin{tikzcd}
						Y'\ar[r, "f'"]\ar[d, "g'"]	&X'\ar[d, "g"]		\\
						Y\ar[r, "f"]				&X	
					\end{tikzcd}
							& \Rightarrow 
							 \begin{array}{l}
								f^*g_! \overset\iso\longrightarrow g'_!{f'}^{*} \\
								g'_*{f'}^{!} \overset\iso\longrightarrow f^!g_{*}
							  \end{array}
				\end{array}\]
				For $g$ proper, identifying $g_!$ with $g_*$ yields so called \emph{proper base change}, and in this case the isomorphism holds without finite type assumption. For $f$ smooth, purity below allows us to replace $f^!$ with $f^*$ and yields so called \emph{smooth base change}.
			\item (Purity)
					If $f:Y\rightarrow X$ is quasi-projective, smooth, of relative dimension $d$ then for any object $M\in DM(X)$, we have functorial isomorphisms:
				\[
					f^!(M)(-d)[-2d]\iso f^*(M) \text{ where } d=\dim Y-\dim X
				\]
				 (Absolute purity) If $i:Z\hookrightarrow X$ denote a closed immersion of everywhere co-dimension $c$ with $Z,X$ regular then we have an isomorphism:
				\[
					i^! 1_X \iso  1_Z(-c) [-2c]
				\]
				where $1_Y$ is the unit of the monoidal structure in $DM(Y)$.
		\item (Separated-ness) If $f:Y \rightarrow X$ is a finite surjective universal homeomorphism then $f^*$ induces an equivalence $DM(X)\rightarrow DM(Y)$ and hence $(f^*\iso f^!, f_*\iso f_!)$ form inverses up to equivalence.
%
		\item (Comparison with (motivic) cohomology)  For any $\pi:X\rightarrow \Spec k$ with $X$ regular and connected and $k$ perfect. Then we have:
		\[
			\Hom_{DM(\Spec k)}( 1_k,\pi_* 1_X(q)[p]) 	=  H^{p,q}(X) = \begin{cases}
																						0 & p>2q\text{ or }p>q+\dim X\\
																						0 & q\le1\text{ and }(p,q)\notin\{(0,0),(1,1),(2,1)\}\\
																						\mathbb{Z}(X)\otimes\Q & (p,q)=(0,0)\\
																						\mathcal{O}^{*}(X)\otimes\Q & (p,q)=(1,1)\\
																						Pic(X)\otimes\Q & (p,q)=(2,1)
																		\end{cases}
		\]
		where $H^{p,q}(X)$ denotes the motivic cohomology of $X$ with rational coefficients.
		\item (Continuity) Assume $\{X_i\}_{i\in I}$ is a pro-object in category of schemes where $I$ is a small co-filtering category, such that the transition maps $g_{j\rightarrow i}:X_j\rightarrow X_i$ are affine for all maps $j\rightarrow i$ in $I$. Let $X=\lim _{i\in I}X_i$ in the category of schemes (which exists because of the assumption on $g_{j\rightarrow i}$) and let $f_i:X\rightarrow X_i$ denote the natural map. Let $i_0$ be any object in $I$, and let $I/i_0$ be the category over $i_0$: 
		\begin{align*}
			 \forall A\in DM(X) &\;\;\;\;\; \exists j\in I, j\in A' \text{ s.t. }A\iso f_j^*(A')& &\\
			\forall A, B\in DM(X_{i_0}) &\;\;\;\; \lim_{\underset{j\in I/i_0}\longleftarrow}\Hom_{DM(X_i)}(g^*_{j\rightarrow i_0}A,g^*_{j\rightarrow i_0}B) = \Hom_{DM(X)}(f_{i_0}^*A,f_{i_0}^*B)
		\end{align*}
		\end{enumerate}
\end{para}
\begin{para}\label{motives:DAandDM}
	One choice for such a construction is the category of motivic sheaves without transfers as constructed by Ayoub in \cite{ayoub_thesis_1, ayoub_thesis_2}. This is the category $\mathbb{SH}_\mathfrak{M}^T(X)$ of \cite[4.5.21]{ayoub_thesis_2} with $\mathfrak{M}$ being the complex of $\Q$-vector spaces (and one works with the topology \'etale topology), also denoted as $DA(X)$ in the discussion \cite[2.1]{ayoub2012relative}. The second choice for the construction is the compact objects in the category of motivic sheaves with transfers, the Beilinson motives $DM_{B,c}(X)$ as described in the article \cite{cisinski2012triangulated}. 
\end{para}
\begin{remark}\label{remark:qp}
		Constructions of \cite{ayoub_thesis_1, ayoub_thesis_2} require that all schemes are assumed to be quasi-projective (see \cite[\S 1.3.5]{ayoub_thesis_1}) over the base. Since such objects are stable under immersions, base change, etc., and also because Jong's alterations can be chosen to be quasi-projective \cite{jongAlterations}, this does not cause any problem in the proofs with this additional restriction.
\end{remark}
Our assumptions are then valid in both these situations: 
\begin{proposition} 
	The assumption \ref{motives:6functors} holds if we work either with $DA(X)$ (and restrict to quasi-projective $X$) or $DM_{B,c}(X)$ of \ref{motives:DAandDM}.
\end{proposition}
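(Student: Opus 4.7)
The plan is to verify each of the eight items in \ref{motives:6functors} separately, citing the appropriate references for $DA(X)$ (from Ayoub's thesis \cite{ayoub_thesis_1,ayoub_thesis_2} and \cite{ayoub2012relative}) and for $DM_{B,c}(X)$ (from \cite{cisinski2012triangulated}). Items (1)--(4) constitute the standard formalism of Grothendieck's six operations on a motivic triangulated category: they are built into the very construction in both settings. For $DA(X)$, the rigid tensor triangulated structure, the adjoint pairs $(f^*,f_*)$ and $(f_!,f^!)$, the compatibility with composition, the Tate twist, the natural transformation $f_!\to f_*$ with its iso for proper $f$, localization, and base change are all spelled out in the relevant chapters of \cite{ayoub_thesis_1,ayoub_thesis_2}. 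For $DM_{B,c}(X)$ the same ingredients are assembled in \cite{cisinski2012triangulated}, where they prove that Beilinson motives form a motivic triangulated category over Noetherian schemes of finite dimension and that the six operations preserve compact/constructible objects. The restriction to quasi-projective schemes for $DA(X)$ is the one noted in \ref{remark:qp}.

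Item (5) (relative and absolute purity) is explicit in both references: relative purity follows from the Thom isomorphism for smooth morphisms, and absolute purity for regular closed immersions is proved for $DM_B$ in \cite{cisinski2012triangulated} (using the comparison with Voevodsky motives and Quillen's higher Chow group computation). For $DA$, absolute purity can be obtained either directly or transported via the comparison functor $DA(X)\to DM_{B,c}(X)$, which is an equivalence after tensoring with $\Q$ in the settings considered. Item (6) (invariance under finite surjective universal homeomorphisms) is the ``separatedness'' property of Ayoub's construction, proven in \cite{ayoub_thesis_2} for $DA$, and for $DM_B$ it is part of the motivic triangulated category axioms established in \cite{cisinski2012triangulated}; this is essentially a consequence of the fact that the \'etale site is insensitive to universal homeomorphisms rationally.

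Item (7) (comparison with motivic cohomology in low weights) reduces, for $DM_B$, to the standard identification $\Hom_{DM(k)}(1_k,\pi_*1_X(q)[p])=H^{p,q}(X)_\Q$ of \cite{cisinski2012triangulated}; the explicit low-weight formulas then follow from Voevodsky's computations $H^{0,0}(X)=\Z(X)$, $H^{1,1}(X)=\mathcal O^*(X)$, $H^{2,1}(X)=\mathrm{Pic}(X)$, and the vanishing bounds $p>2q$ (Beilinson--Soul\'e type triviality in the stated range) and $p>q+\dim X$ (the Voevodsky vanishing). For $DA$ the same result is obtained via the comparison with $DM_B$, which is a rational equivalence preserving all six operations, so the Hom groups match.

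Item (8) (continuity for affine co-filtering limits of schemes) is proved directly in \cite{ayoub_thesis_2} for $DA$ and in \cite{cisinski2012triangulated} for $DM_B$; both proofs exploit that the models of these categories (Nisnevich or \'etale sheaves with transfers/without transfers) are themselves compatible with such co-filtering limits on generators, after which the statement follows by passing to Bousfield localizations and homotopy categories. I expect this last item to be the most delicate point to verify, since the statement in \ref{motives:6functors}(8) involves both essential surjectivity on objects and the Hom-limit identification; however, both of these are exactly what the continuity theorems in the two cited references provide, so no new argument is needed. The entire proposition therefore reduces to a careful collation of known results, and it suffices to indicate the precise references item by item.
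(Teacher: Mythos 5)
Your proposal is correct and takes essentially the same approach as the paper: both proofs are a collation of known results, verifying items (1)--(8) by citing Ayoub's thesis and realization paper for $DA$ and Cisinski--D\'eglise for $DM_{B,c}$, with item (7) reduced to Voevodsky's motivic cohomology computations. The only cosmetic difference is that for item (7) in the $DA$ case the paper passes through the equivalence $DA(k)\cong DM(k)$ with transfers over a perfect field rather than through $DM_B$, which amounts to the same comparison.
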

\begin{proof}
	For $DA(X)$ \cite[4.5]{ayoub_thesis_2} shows that $DA(-)$ gives rise to an homotopy-stable algebraic derivator in the sense of \cite[2.4.13]{ayoub_thesis_1}. The axiom DerAlg 5 allows one to invoke the results of \cite[1.4]{ayoub_thesis_1}. In particular, \cite[1.4.2]{ayoub_thesis_1}  gives the existence of the four functors $f^*, f^!, f_*, f_!$ satisfying $(1), (2), (4)$. Purity $(5)$ follows from \cite[1.6.19]{ayoub_thesis_1} (also see \cite[2.14]{ayoubrealisation}) , while absolute purity is shown in \cite[Cor. 7.5]{ayoubrealisation}. Localization $(3)$ is \cite[4.5.47]{ayoub_thesis_2}. For separatedness $(6)$ by adjointness it is enough to show that $f^*$ is an equivalence which follows from \cite[3.9]{ayoubrealisation}. Property $(8)$ is shown in \cite[Prop. 2.30]{ayoubrealisation}. 
	Property $(7)$ follow from the fact that there are equivalences $a^{tr}:DA(k)\cong DM(k):o^{tr}$, for $k$ perfect where the right hand side is the corresponding construction with transfers due to \cite{voevodsky} (see \cite[2.4]{ayoub2012relative}) for which we know the computations of motivic cohomology by \cite[3.5, 4.2, 19.3]{weibel_mazza_notes} (or see \cite[2.2.6]{vaish2016motivic} for details).

	For the Beilinson motives $DM_{B,c}(X)$ all the properties of \ref{motives:6functors} except $(6), (7)$ are already discussed in the introduction of \cite{cisinski2012triangulated}, section $C$ -- properties $(1)$ to $(5)$ and $(8)$ are part of Grothendieck's six functor formalism as defined in \cite[A.5.1]  {cisinski2012triangulated} (continuity is an extension of usual six functor formalism as mentioned in their list) and hold for $DM_{B,c}(X)$ by \cite[C.Theorem 3]{cisinski2012triangulated}. Separatedness $(6)$ can be deduced from \cite[Prop. 2.1.9]{cisinski2012triangulated}. Property $(7)$ can be deduced from the corresponding computation in Voevodsky's category, see \cite[3.5, 4.2, 19.3]{weibel_mazza_notes} (this is also present in \cite[11.2.3]{cisinski2012triangulated}).
\end{proof}
%
\begin{para}Our constructions will happen in the category of cohomological motives -- these are the cohomological analogues of (that is, dual of) effective motives. The key properties of the category of cohomological motives are 
\begin{itemize}
	\item It is (finitely) generated by motives of the form $\pi_*1_X$ for $\pi$ (essentially) smooth, projective.
	\item It includes all motives of the form $\pi_*1_X$ for $\pi$ arbitrary.
	\item It is stable under $f^*, f_*, f^!, f_!$ for $f$ immersions, and $f^*$ for $f$ arbitrary, hence satisfying the formalism of gluing and extended gluing.
\end{itemize}
These properties are standard and have been well covered in literature, see e.g. \cite[\S 3.1]{ayoub2012relative} or \cite[\S 3]{vaish2016motivic}. We briefly summarize the situation below. We throughout assume the situation of \ref{motives:6functors}.\end{para}

%
%
\begin{definition}\label{definieCohMotives}
	Let $Y$ be a Noetherian scheme of finite type over $k$.  We define the following collection of objects of $DM(Y)$ (also identified with the corresponding full subcategory):
	\begin{align*}
		S^{coh}(X)	= \{ p_*1_X \big| p:Y\rightarrow X\text{ projective}, Y\text{ regular}\}
		& &DM^{coh}(X) = \vvspan{S^{coh}(X)}
	\end{align*}
	$DM^{coh}(X)$ is called as the \emph{category of cohomological motives} over $X$. 
\end{definition}
We can replace projectivity in the definition of $S^{coh}(X)$ by properness:
\begin{lemma}[Motivic Chow's lemma]\label{motivicChowLemma}
	Let $p:Y\rightarrow X$ be proper, with $Y$ regular. Then there is a map $q:Y'\rightarrow Y$ with $p\circ q:Y'\rightarrow X$ projective, $\dim Y'=\dim Y$, $Y'$ regular such that $p_*1_Y$ is a retract of $(p\circ q)_*1_{Y'}$.
\end{lemma}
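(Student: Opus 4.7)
The plan is to reduce to classical Chow's lemma combined with de Jong's alterations, and then extract the retract from a trace/degree argument that uses the $\Q$-coefficients.

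First, I would apply the classical Chow's lemma to the proper morphism $p:Y\rightarrow X$ to obtain a projective birational morphism $q_1:Y_1\rightarrow Y$ with $p\circ q_1:Y_1\rightarrow X$ projective; by Remark~\ref{remark:qp} we are free to assume everything in sight is quasi-projective. The scheme $Y_1$ is not necessarily regular, so next I would apply de Jong's alterations \cite{jongAlterations} to get a projective, generically finite, surjective morphism $q_2:Y'\rightarrow Y_1$ with $Y'$ regular. Setting $q=q_1\circ q_2$, the composition $p\circ q:Y'\rightarrow X$ is projective, $Y'$ is regular, and since $q$ is surjective and generically finite, $\dim Y'=\dim Y$, as required for the geometric part of the claim.

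It remains to exhibit $p_*1_Y$ as a retract of $(p\circ q)_*1_{Y'}=p_*q_*1_{Y'}$. By applying $p_*$, it suffices to show that $1_Y$ is a retract of $q_*1_{Y'}$ in $DM(Y)$. Here I would use the standard trace argument available with $\Q$-coefficients: the morphism $q:Y'\rightarrow Y$ is proper (so $q_!=q_*$), generically finite of some degree $d$, between regular schemes. On the one hand, the unit of adjunction gives a canonical map $\eta:1_Y\rightarrow q_*q^*1_Y=q_*1_{Y'}$. On the other hand, one has a trace (or transfer) map $\mathrm{tr}:q_*1_{Y'}\rightarrow 1_Y$; a clean way to construct it is to restrict to the dense open $U\subset Y$ over which $q$ is finite \'etale (we may replace $Y'$ by a further alteration, still keeping $Y'$ regular, to ensure generic \'etaleness, using that $\Q\subset$ our coefficients), define the trace on $U$ using the finite \'etale structure, and then extend over all of $Y$ by purity / the vanishing $\hom(1_Y,i_*i^!q_*1_{Y'})=0$ coming from absolute purity and regularity, exactly as in the standard motivic trace construction (cf.\ the references in \cite[\S 3]{vaish2016motivic} or \cite[\S 3]{ayoub2012relative}). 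The composition $\mathrm{tr}\circ\eta:1_Y\rightarrow 1_Y$ is multiplication by $d$, which is invertible in the $\Q$-linear category $DM(Y)$, so $\tfrac{1}{d}\,\mathrm{tr}$ splits $\eta$ and exhibits $1_Y$ as a retract of $q_*1_{Y'}$.

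The main obstacle is the trace construction in step two: it is the only nonformal ingredient and requires that the alteration be arranged to be generically \'etale (automatic in characteristic zero; in positive characteristic one typically uses Gabber's refinement of de Jong, harmless since $p$ is invertible in $\Q$) so that a well-defined degree-$d$ trace exists on a dense open and extends to all of $Y$ by a purity argument in the regular setting. Once this trace is in hand, everything else is formal manipulation of adjunctions, and applying $p_*$ to the splitting produces the desired retraction $p_*1_Y\hookrightarrow (p\circ q)_*1_{Y'}$.
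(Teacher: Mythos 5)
The paper does not prove this lemma at all: it simply cites H\'ebert [3.1], and your argument is essentially a reconstruction of the standard proof of that cited result (classical Chow's lemma, then de Jong/Gabber alterations to regularize, then the $\Q$-linear trace argument splitting $1_Y\to q_*1_{Y'}$), so the overall strategy is the right one. One imprecision worth fixing is in the step you correctly identify as the only nonformal one: the vanishing you invoke, $\hom(1_Y,i_*i^!q_*1_{Y'})=0$, is the obstruction group for extending a map \emph{into} $q_*1_{Y'}$ from $1_Y$ across the closed complement, i.e.\ for the unit $\eta$ --- but $\eta$ exists globally by adjunction and needs no extension. What actually has to be extended is the trace, a map \emph{out of} $q_*1_{Y'}$, and the relevant obstruction (via the triangle $j_!j^*\to\mathrm{id}\to i_*i^*$ and proper base change) lives in $\hom\bigl(q_{Z*}1_{Y'_Z},\,i^!1_Y[1]\bigr)$, which one kills using absolute purity on a regular stratification of $Z$ together with the vanishing of motivic cohomology in negative twists (or, as in Cisinski--D\'eglise, by Noetherian induction on $\dim Y$). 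With that correction the argument goes through and agrees with the proof of the result the paper is quoting.
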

\begin{proof}
	This is \cite[3.1]{hebert2011structure}.
\end{proof}
	It is possible to work with $f$ arbitrary for definition of $S^{coh}(X)$:
	\begin{lemma}\label{arbitraryY}
		For any morphism $f:Y\rightarrow X$, we have that $f_* 1_Y(-c)\in DM^{coh}(X)$, where $c\ge 0$. 
	\end{lemma}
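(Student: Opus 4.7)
The plan is Noetherian induction on $\dim Y$, combining Nagata compactification, de Jong's alterations, and the $\Q$-linear trace to reduce to the case of a projective map from a regular scheme, which is immediate from the definition of $DM^{coh}(X)$ together with motivic Chow's lemma \ref{motivicChowLemma}; this follows the general approach of \cite[\S 3.1]{ayoub2012relative} and \cite[\S 3]{vaish2016motivic}. I first dispose of the Tate twist: by the projective bundle formula (a consequence of property $(7)$ of \ref{motives:6functors} applied to $\mathbb{P}^c_k$), $\mathrm{pr}_* 1_{Y \times \mathbb{P}^c} \iso \bigoplus_{i=0}^c 1_Y(-i)[-2i]$ where $\mathrm{pr}: Y \times \mathbb{P}^c \to Y$, so $f_* 1_Y(-c)[-2c]$ is a direct summand of $(f \circ \mathrm{pr})_* 1_{Y \times \mathbb{P}^c}$, and Karoubi-closure plus shift-invariance of $DM^{coh}(X)$ reduce to the $c = 0$ case for arbitrary $f$. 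Separatedness (property $(6)$) lets me assume $Y$ reduced. By Nagata compactification I factor $f = \bar f \circ j$ with $\bar f: \bar Y \to X$ proper and $j: Y \hookrightarrow \bar Y$ open of dense image (replacing $\bar Y$ by the scheme-theoretic closure of $j(Y)$), so $Z := \bar Y \setminus Y$ has $\dim Z < \dim Y$; pushing the localization triangle $i_* i^! 1_{\bar Y} \to 1_{\bar Y} \to j_* 1_Y \to$ by $\bar f_*$ identifies $f_* 1_Y$ as the cofiber of $(\bar f i)_* i^! 1_{\bar Y} \to \bar f_* 1_{\bar Y}$. Strengthening the induction to \emph{$g_* M \in DM^{coh}(X')$ for all $g: Y' \to X'$ with $\dim Y' < \dim Y$ and all $M \in DM^{coh}(Y')$}, and using the stability of $DM^{coh}$ under $i^!$ for immersions, the boundary term lies in $DM^{coh}(X)$, reducing the problem to proper $f$.

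For proper $f: Y \to X$ with $Y$ reduced, I apply de Jong's theorem (in the projective, quasi-projective form permitted by remark \ref{remark:qp}) to obtain a generically étale projective alteration $\pi: \tilde Y \to Y$ with $\tilde Y$ regular. Then $f\pi: \tilde Y \to X$ is proper with $\tilde Y$ regular, so motivic Chow's lemma \ref{motivicChowLemma} places $(f\pi)_* 1_{\tilde Y}$ in $DM^{coh}(X)$. Let $U \subset Y$ be the open dense étale locus of $\pi$, with complement $W$ of dimension $< \dim Y$; the $\Q$-linear trace exhibits $1_U$ as a direct summand of $(\pi|_{\pi^{-1}(U)})_* 1_{\pi^{-1}(U)}$. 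A standard dévissage combining the two localization triangles (for $U \subset Y$ and $\pi^{-1}(U) \subset \tilde Y$) with this trace idempotent identifies $f_* 1_Y$ as a retract of $(f\pi)_* 1_{\tilde Y}$ modulo pushforwards from $W$, which lie in $DM^{coh}(X)$ by the strengthened inductive hypothesis.

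The hard part will be this final dévissage: the trace splitting is defined only over the étale locus $U$, and propagating it to all of $Y$ requires combining the two localization triangles with the trace idempotent on $U$ so that the failure to split globally is localized on $W$ (of smaller dimension), where the induction absorbs it. The essential inputs are motivic Chow (for the regular projective case), absolute purity (to compute $i^!$ on regular pairs as a Tate shift, keeping boundary terms in $DM^{coh}$), and the $\Q$-linearity of $DM$ (which inverts degrees of finite étale covers via trace).
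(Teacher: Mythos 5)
The paper itself disposes of this lemma in one line, by citing \cite[3.1.7]{vaish2016motivic} together with \ref{motivicChowLemma}, so what you are doing is reconstructing that external argument; your outline (induction on dimension, $\mathbb{P}^c$-bundles for the Tate twist, de Jong alterations plus the $\Q$-linear trace, localization d\'evissage) is indeed the strategy of \cite[\S 3.1]{ayoub2012relative}, \cite{hebert2011structure} and \cite{vaish2016motivic}. But as written there is a genuine gap in how you handle the closed boundary terms. In the Nagata step you push forward the triangle $i_*i^!1_{\bar Y}\to 1_{\bar Y}\to j_*1_Y$ and absorb the term $(\bar f i)_*\,i^!1_{\bar Y}$ by appealing to ``stability of $DM^{coh}$ under $i^!$ for immersions.'' That stability is \ref{t-Structure:stabilityOfCohomologicalMotives}(iv), which is stated after the present lemma and, in the source it cites, is proved downstream of it --- so the appeal is circular; and absolute purity does not compute $i^!1_{\bar Y}$ here, because the Nagata compactification $\bar Y$ need not be regular. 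Your strengthened inductive hypothesis ($g_*M\in DM^{coh}(X')$ for \emph{all} $M\in DM^{coh}(Y')$ with $\dim Y'<\dim Y$) does not rescue this: $DM^{coh}(Y')$ is generated by $p_*1_Z$ with $p:Z\to Y'$ projective and $Z$ regular of arbitrary dimension, so that class is not controlled by an induction on $\dim Y'$, and you never prove the inductive step for it.

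The repair, which is what the cited sources actually do, is to arrange that every boundary term is a proper pushforward of a \emph{negative Tate twist of the unit} on a lower-dimensional scheme, since that is exactly what the induction covers. For $f$ proper one uses the other localization triangle $j_!1_U\to 1_Y\to i_*1_W$: since $f_*j_!=f_!j_!=(fj)_!$, the closed term is $(fi)_*1_W$ with $\dim W<\dim Y$, and the open term is compared with $(f\pi)_!1_{\tilde Y}$ upstairs via the trace; the only $(-)^!$ that ever appears is $\tilde\imath^{\,!}1_{\tilde Y}$ on the \emph{regular} alteration $\tilde Y$, where absolute purity (after refining the stratification so the strata are regular) converts it into twists $1(-e)[-2e]$ --- this is precisely why the statement must carry the twist $(-c)$. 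The non-proper case then requires the separate assertion $j_*1_V(-c)\in DM^{coh}(Y)$ for open immersions $j$, proved by the same alteration-plus-purity d\'evissage rather than by quoting $i^!$-stability. Two smaller points: the projective bundle formula is not property $(7)$ of \ref{motives:6functors} (that item is the motivic cohomology computation) but a separate standard input; and the twist reduction should be phrased as closure of $DM^{coh}(X)$ under $(-c)$, checked on the generators $p_*1_Z$ via $Z\times\mathbb{P}^c$, since replacing $Y$ by $Y\times\mathbb{P}^c$ inside a dimension induction raises the dimension.
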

	\begin{proof}
	This is \cite[3.1.7]{vaish2016motivic}, where we use \ref{motivicChowLemma} to reduce to our situation.
	\end{proof}


This allows us to prove stability properties of $DM^{coh}(X)$ which are relevant for formalism of gluing:
	\begin{proposition}[Stability of cohomological motives]\label{t-Structure:stabilityOfCohomologicalMotives}
		Let $f:Y\rightarrow X$ be a morphism of schemes of finite type. Then:
		\begin{enumerate}[i.]
			\item 
				For $f$ arbitrary, $f_*(DM^{coh}(Y))\subset DM^{coh}(X)$.
			\item 
				For $f$ arbitrary, $f^*( DM^{coh}(X)) \subset  DM^{coh}(Y)$
			\item 
				For $f$ quasi-finite (e.g. an immersion), $f_!( DM^{coh}(Y)) \subset  DM^{coh}(X)$.
			\item 
				For $f$ an immersion $f^!( DM^{coh}(X)) \subset  DM^{coh}(Y)$.
		\end{enumerate}
		Additionally, if $\epsilon:\Spec K\rightarrow X$ is any Zariski point, then $\epsilon^*(DM^{coh}(X))\subset DM^{coh}(K)$.
	\end{proposition}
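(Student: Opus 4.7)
The plan is to reduce every part of the proposition to the action of the relevant functor on the generators $p_*1_Z$ of $DM^{coh}(-)$ (with $p$ projective and $Z$ regular). Because $DM^{coh}(-)=\vvspan{S^{coh}(-)}$ is triangulated and Karoubi-closed and every functor appearing is triangulated and commutes with direct sums and retracts, this reduction is automatic. The key workhorse is Lemma \ref{arbitraryY}, which lets us absorb objects of the form $g_*1_W$ with $g$ only arbitrary (no regularity on $W$) into $DM^{coh}$; this is what will let us recognize the pullbacks produced by base change as cohomological motives even though base change ruins regularity of the source.

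For (i), $f_*(p_*1_Z)=(f\circ p)_*1_Z$ is exactly of the form to which \ref{arbitraryY} applies, so it lies in $DM^{coh}(X)$. For (ii), proper base change along the cartesian square
\[
\begin{tikzcd}
Z'\ar[r,"f'"]\ar[d,"p'"] & Z\ar[d,"p"] \\
Y\ar[r,"f"] & X
\end{tikzcd}
\]
identifies $f^*p_*1_Z\cong p'_*1_{Z'}$ with $p'$ projective; although $Z'$ need not be regular, Lemma \ref{arbitraryY} again places this in $DM^{coh}(Y)$. For (iii), Zariski's Main Theorem factors a (separated) quasi-finite $f$ as an open immersion $j$ followed by a finite (hence proper) map $\bar f$, so $f_!=\bar f_*\circ j_!$; the $\bar f_*$-factor is (i), while for $j_!$, letting $i$ be the closed complement, the localization triangle
\[
j_!A\;\to\;j_*A\;\to\;i_*i^*j_*A\;\to\;j_!A[1]
\]
has its right two terms in $DM^{coh}(X)$ by (i) and (ii), forcing $j_!A\in DM^{coh}(X)$. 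For (iv), factor an immersion as $Y\hookrightarrow\bar Y\hookrightarrow X$ (open then closed); the open part is (ii) since $f^!=f^*$, and for a closed immersion $i$ with open complement $j$ the localization triangle $i_*i^!A\to A\to j_*j^*A$ together with (i)--(ii) puts $i_*i^!A\in DM^{coh}(X)$, whence $i^!A=i^*(i_*i^!A)\in DM^{coh}(Z)$ by (ii) (using $i^*i_*=\mathrm{id}$).

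The main obstacle is the Zariski-point addendum. Let $Y\subset X$ be the reduced closure of $\epsilon(\Spec K)$ and factor $\epsilon$ as $\Spec K\xrightarrow{\epsilon_Y}Y\xrightarrow{\iota}X$ with $\iota$ a closed immersion and $\epsilon_Y$ the generic point of $Y$; by (ii) we may replace $X$ by $Y$ and assume $\epsilon$ is the generic point of the irreducible $X$. Proper base change on a generator $p_*1_Z$ then yields $\epsilon^*p_*1_Z\cong p'_*1_{Z'}$ with $p':Z'=Z\times_X\Spec K\to\Spec K$ projective. The difficulty is twofold: $\Spec K$ is not of finite type over $k$, so Lemma \ref{arbitraryY} does not apply verbatim, and $Z'$ is typically not regular. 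I would overcome this by running the very argument that underlies \ref{arbitraryY} directly over the excellent base $\Spec K$: choose a de Jong alteration $q:W\to Z'$ with $W$ regular projective, use that with $\Q$-coefficients the trace exhibits $1_{Z'}$ as a direct summand of $q_*1_W$, and conclude that $p'_*1_{Z'}$ is a retract of $(p'\circ q)_*1_W\in S^{coh}(K)$ and hence lies in $DM^{coh}(K)$. Alternatively, one may spread $Z'$ out to an open $U\subset X$ as $p^{-1}(U)\to U$ (which is automatically projective with regular source, being open in the regular $Z$) and apply continuity (\ref{motives:6functors}(8)) to identify $\epsilon^*p_*1_Z$ with the image of an object of $DM^{coh}(U)$, reducing to an instance of (ii).
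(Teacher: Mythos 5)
Your proposal is correct and follows essentially the same route as the paper: verify everything on the generators $p_*1_Z$, use proper base change to compute the pullback, and absorb the resulting (possibly singular) projective pushforward into $DM^{coh}$ via Lemma~\ref{arbitraryY} (whose content is the motivic Chow/alteration trick). Where you differ is in scope and care. The paper delegates parts (i)--(iv) wholesale to \cite[3.1.9]{vaish2016motivic} and, for the Zariski-point addendum, just asserts proper base change plus \ref{arbitraryY} with no further comment; you instead reprove (i)--(iv) from the standard generators-plus-localization-triangle arguments, which is fine and self-contained, and you explicitly flag the genuine subtlety that the paper elides: $\Spec K$ is not of finite type over $k$, so \ref{arbitraryY} as literally stated (for Noetherian schemes of finite type over $k$) does not directly apply, and one must either re-run the de Jong alteration argument over the excellent base $\Spec K$ or invoke continuity \ref{motives:6functors}(8) to realize $\epsilon^*p_*1_Z$ as a colimit of restrictions to open neighborhoods $U$ of the generic point and reduce to an instance of (ii). Both of your fixes are correct, and the continuity route is arguably the cleaner one in this framework since continuity is already part of the axiomatic package. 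In short: same strategy, more complete write-up, and a legitimate technical repair of a point the paper leaves implicit.
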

	\begin{proof}
		This is \cite[3.1.9]{vaish2016motivic} except for the additional claim. Additional claim can be verified on generators and proper base change tells us that $\epsilon^*p_*1_Y = p'_*1_{Y'}$ where $p':Y'\rightarrow \Spec K'$ is pullback of $p:Y\rightarrow X$ projective, and hence lies in $DM^{coh}(K)$ using \ref{arbitraryY}.
	\end{proof}

\begin{proposition}[Formalism of gluing, continuity]\label{coh:continuity}
	The extended formalism of gluing \ref{gluing:extended4functors} and continuity \ref{gluing:continuity} for any $X$, for 
			$D_Y=DM^{coh}(Y)$ and $D(K)=DM^{coh}(K)\cong DM^{coh}(K^{perf})$

%
%
\end{proposition}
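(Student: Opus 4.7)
The plan is to verify the two requirements separately: the extended formalism of gluing \ref{gluing:extended4functors} and the three parts of continuity \ref{gluing:continuity}. The inputs are the package \ref{motives:6functors}, the stability statement \ref{t-Structure:stabilityOfCohomologicalMotives}, and Lemma \ref{arbitraryY}. The formalism is purely formal; the only genuine content lies in one step of continuity.

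For the extended formalism of gluing, \ref{t-Structure:stabilityOfCohomologicalMotives} directly provides the existence of $f^*, f_*, f^!, f_!$ on $DM^{coh}$ for immersions and of $\epsilon_Z^*$ for Zariski points. Every remaining datum requested by \ref{gluing:4functors} and \ref{gluing:extended4functors} --- the composition isomorphisms, the natural transformations $f^! \to f^*$ and $f_! \to f_*$, the identification $1_* \iso 1_! \iso 1^* \iso 1^! \iso \id$, the localisation triangles and unit/counit isomorphisms of \ref{gluing:openClosed}, and the compatibility $\epsilon_Z^* = \epsilon_{Z'}^* \circ f^*$ --- simply descends from $DM$ to $DM^{coh}$ because every morphism and object in sight already lives in the cohomological subcategory. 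I expect no work beyond recording this.

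For continuity, fix a Zariski point $\epsilon_Y \colon \Spec K \hookrightarrow Y$ with closure $Y$. Since $Y$ is irreducible with $\Spec K$ as its generic point, scheme-theoretically $\Spec K = \varprojlim_U U$ as $U$ ranges over open subsets of $Y$, and the transition maps (open immersions) are affine. Property $(8)$ of \ref{motives:6functors} therefore applies and yields simultaneously
\[
DM(K) \;\iso\; 2\text{-}\varprojlim_U DM(U), \qquad \Hom_{DM(K)}(\epsilon^* a, \epsilon^* b) \;=\; \varinjlim_U \Hom_{DM(U)}(f_U^* a, f_U^* b).
\]
Combined with the full faithfulness of $DM^{coh}(-) \hookrightarrow DM(-)$, the Hom identity gives the \emph{full} and \emph{faithful} parts of continuity for $DM^{coh}$ at once: a morphism in $DM^{coh}(K)$ is already a morphism in $DM(K)$, and its spreading to some $DM(U)$ lies automatically in $DM^{coh}(U)$ since its source and target do; vanishing is likewise detected after shrinking $U$.

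The one step with genuine content is essential surjectivity: I need $\bar a \in DM^{coh}(U)$, not merely in $DM(U)$. Using $DM^{coh}(K) = \vvspan{S^{coh}(K)}$ I would induct on the construction of $a$ from generators via shifts, extensions, and retracts. For a generator $a = p_*1_X$ with $p \colon X \to \Spec K$ projective and $X$ regular, pick any finite-type spreading $p_U \colon X_U \to U$ with $p_U$ projective; the crucial point is that \ref{arbitraryY} then places $(p_U)_*1_{X_U}$ in $DM^{coh}(U)$ without any hypothesis on $X_U$, and proper base change identifies its pullback along $\epsilon$ with $p_*1_X$. Extensions spread by lifting the connecting morphism via the fullness already established and completing a triangle inside the triangulated subcategory $DM^{coh}(U')$. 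Retracts spread by lifting an idempotent via fullness, enforcing $\bar e^2 = \bar e$ after shrinking via faithfulness, and invoking the Karoubi-closedness of $DM^{coh}(U)$ which is built into its definition. The only point where I anticipate friction is the generator step: without \ref{arbitraryY} one would be forced to spread regular $X$ to regular $X_U$, which is not generally possible, so bypassing this requirement by admitting $(p_U)_*1_{X_U}$ into the cohomological class even when $X_U$ fails to be regular is the one non-formal idea of the proof.
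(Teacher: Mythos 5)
Your proposal follows the same route as the paper's proof, which is considerably terser: the paper simply notes that $DM^{coh}(K)\xrightarrow{\cong}DM^{coh}(K^{perf})$ by separatedness and continuity, and that the extended formalism descends from $DM(-)$ because \ref{t-Structure:stabilityOfCohomologicalMotives} shows all the functors restrict to $DM^{coh}(-)$. What you add is a genuine unpacking of the continuity part \ref{gluing:continuity}, which the paper leaves implicit: you correctly observe that since $DM^{coh}$ is a full subcategory the \emph{Full} and \emph{Faithful} conditions are immediate from \ref{motives:6functors}(8), while \emph{essential surjectivity} needs the extra care that the spread $\bar a$ land in $DM^{coh}(U)$ and not merely $DM(U)$. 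Your identification of \ref{arbitraryY} as the tool that absorbs the failure of regularity to spread out is exactly the right non-formal step, and the induction on the $\vvspan{-}$ structure via shifts, extensions and retracts is standard and sound. The one thing you omit is a word about the asserted equivalence $DM^{coh}(K)\cong DM^{coh}(K^{perf})$, which is part of the proposition's statement; it follows in one line from separatedness \ref{motives:6functors}(6) together with continuity \ref{motives:6functors}(8) and the stability of $DM^{coh}$ under $f^*$ and $f_*$, as the paper notes.
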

\begin{proof}
		Note that $DM^{coh}(K)\xrightarrow{\cong}DM^{coh}(K^{perf})$ by using separatedness and continuity. The extended formalism of gluing holds for this setup since the same holds for the full category $DM(-)$, and \ref{t-Structure:stabilityOfCohomologicalMotives} tells us that the functors restrict to $DM^{coh}(-)$.
\end{proof}

\section{Examples and applications}\label{sec:applications}

\makeatletter
\renewcommand{\themaster}{\thesubsection.\arabic{master}}
\@addtoreset{master}{subsection}
\makeatother

Below we give several examples and applications for punctual gluing. Note that the results in \ref{sec:application:bondarko} and \ref{sec:application:artinpicard} are not new and have independently appeared, but the main result in \ref{sec:application:1motivic} is new.

\subsection{Bondarko's weight structure}\label{sec:application:bondarko} As a first example we demonstrate how to recover the Chow weight structure on $DM(X)$ as constructed in \cite{hebert2011structure} and independently in \cite{bondarko2014weights} beginning with the Bondarko's construction of the Chow weight structure on $DM(k)$ where $k$ is perfect.

	Recall that in \cite[\S 6.5]{bondarko_weights}, Bondarko constructs a bounded weight structure on $DM_{gm}(k)$  ($=DM(k)$ in our notation) satisfying the following properties:
	\begin{enumerate}
		\item The heart of the weight structure, $H(k):=DM_{w\le 0}(k)\cap DM_{w> 0}(k)[-1]$, is given by:
			\[
				H(k) = \mathcal R (p_*1_X(c)[2c]\mid p:X\rightarrow \Spec k\text{ smooth, proper}, c\in \Z)
			\]
		\item We have an explicit description of the positive and the negative part of the weight structure as:
			\begin{align*}
				DM_{w\le 0}(k) = \mathcal R(Ext(\cup_{n\le 0} H(k)[n] ))& &DM_{w> 0}(k) = \mathcal R(Ext(\cup_{n> 0} H(k)[n] ))
			\end{align*}
	\end{enumerate}
	The first property follows by \cite[\S 6.6]{bondarko_weights}, and the second can be deduced by noting that $H\oplus H\subset H$ and $\mathcal R(H) = H$ using the calculation \cite[1.10]{hebert2011structure}.
	
	Then the following recovers the key result of \cite{hebert2011structure} and \cite{bondarko2014weights}:
	\begin{theorem}\label{application:weights}
		The extended formalism of gluing \ref{gluing:extended4functors} and continuity \ref{gluing:continuity} for any $S$, for $D_Y=DM(Y)$ and $D(K)=DM(K)\cong DM(K^{perf})$. The weight structure $(DM_{w\le 0}(K^{perf}), DM_{w> 0}(K^{perf}))$ on $D(K)=D(K^{perf})$ satisfies continuity of weight structures \ref{gluing:continuityForT}, and hence gives rise to a glued weight structure on $DM(S)$. 
	\end{theorem}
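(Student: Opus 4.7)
The plan is to establish extended gluing, continuity at the level of $DM(-)$, and then continuity of the weight structure; the final assertion will follow from the main punctual gluing theorem \ref{gluing:mainresult}. For the first step, the formalism of Grothendieck's four functors is property $(3)$ of \ref{motives:6functors}. For each Zariski point $\Spec K \to Z \subset S$, I would define $\epsilon_Z^*$ as the composition $DM(Z) \to DM(K) \iso DM(K^{perf})$, with the equivalence coming from separatedness $(6)$; the compatibility $\epsilon_Z^* = \epsilon_{Z'}^* \circ f^*$ is immediate from functoriality. Continuity \ref{gluing:continuity} in all three forms is the continuity property $(8)$ of $DM$ applied to the pro-system $\Spec K = \lim_U U$ indexed by open neighborhoods $U$ of $\Spec K$ in $Y = \overline{\{\Spec K\}}$, combined with separatedness to pass between $K$ and $K^{perf}$.

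For continuity of the weight structure, fix a Zariski point $\Spec K \hookrightarrow S$ with closure $Y$, and write $f: \Spec K \hookrightarrow U \subset Y$ for the open immersion into an open neighborhood. Given $a \in DM_{w\le 0}(K^{perf})$, Bondarko's description expresses $a$ as a retract of an iterated extension of generators $p_{i*} 1_{X_i}(c_i)[2c_i + n_i]$ with $p_i: X_i \to \Spec K^{perf}$ smooth proper and $n_i \le 0$. Standard limit arguments in algebraic geometry, combined with separatedness, spread each $p_i$ to a smooth proper $\bar p_i : \bar X_i \to U$; continuity $(8)$ spreads out the extensions and retract projectors after possibly shrinking $U$, producing $\bar a \in DM(U)$ with $f^* \bar a = a$. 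For any point $\epsilon': \Spec K' \to U$, proper base change $(4)$ gives ${\epsilon'}^{*}(\bar p_{i*} 1_{\bar X_i}(c_i)[2c_i + n_i]) = p''_{*} 1_{X''}(c_i)[2c_i + n_i]$ with $p'': X'' \to \Spec K'$ smooth proper, lying in $H(K'^{perf})[n_i] \subset DM_{w\le 0}(K'^{perf})$; closure of $D^{\le}(U)$ under extensions, shifts by $[n]$ with $n \le 0$, and retracts (inherited pointwise) delivers $\bar a \in D^{\le}(U)$.

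For the positive part the analogous spreading-out produces $\bar a \in DM(U)$ with $f^! \bar a = f^* \bar a = a$ (as $f$ is open), and it suffices to show ${\epsilon'}^!\bigl(\bar p_{*} 1_{\bar X}(c_0)[2c_0 + n]\bigr) \in DM_{w>0}(K'^{perf})$ for each generator with $n > 0$. Let $Z = \overline{\{\Spec K'\}} \cap U$ with closed immersion $i: Z \hookrightarrow U$, and write $i_{\bar X}, \bar p_Z$ for the base changes. Proper base change gives $i^! \bar p_{*} 1_{\bar X} = \bar p_{Z*} i_{\bar X}^! 1_{\bar X}$. I would then choose $V \subset Z$ an open dense regular subscheme containing $\Spec K'$ (available since $Z$ is reduced and hence regular at its generic point) and shrink $U$ so both $U$ and hence $\bar X, \bar X_V$ are regular. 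Via $\epsilon_Z^* = \epsilon_V^* j_V^*$ with $j_V: V \hookrightarrow Z$ open, further base change yields $j_V^* i^! \bar p_{*} 1_{\bar X} = \bar p_{V*} i_{\bar X,V}^! 1_{\bar X}$, and absolute purity $(5)$ applied to the locally closed immersion $i_{\bar X,V}: \bar X_V \hookrightarrow \bar X$ (factored as open followed by closed, with closed part between regular schemes of codimension $c = \mathrm{codim}_U V$) gives $i_{\bar X,V}^! 1_{\bar X} = 1_{\bar X_V}(-c)[-2c]$. A last application of $\epsilon_V^*$ and proper base change then yields
\[
	{\epsilon'}^!\bigl(\bar p_{*} 1_{\bar X}(c_0)[2c_0 + n]\bigr) = p''_{*} 1_{X''}(c_0 - c)[2(c_0 - c) + n],
\]
a shift by $[n]$ with $n > 0$ of an element of $H(K'^{perf})$, hence in $DM_{w>0}(K'^{perf})$ as needed.

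With continuity of weight structures verified, \ref{gluing:mainresult} produces the glued weight structure on $DM(S)$. The main technical obstacle I expect is the ${\epsilon'}^!$ computation on heart generators: the issue is that $Z$ and $\bar X_Z$ need not be regular at non-generic points, so one must reduce via continuity of $DM$ to a regular open neighborhood $V$ of $\Spec K'$, where absolute purity applies. This reduction is the crucial step allowing the punctual-gluing framework to bypass the direct construction of a weight structure on $DM(U)$ found in \cite{hebert2011structure} and \cite{bondarko2014weights}.
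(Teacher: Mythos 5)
Your proposal is correct and follows essentially the same route as the paper: reduce via continuity and the Karoubi/extension-closure of the weight classes to the heart generators $p_*1_X(c)[2c]$, spread these out, and compute $\epsilon'^*$ by proper base change and $\epsilon'^!$ by proper base change plus absolute purity after shrinking to a regular open neighborhood of the point, obtaining the twist $(-c)[-2c]$ that keeps the result in $H(-)$ — this is exactly the content of the paper's Lemma~\ref{lemma:boat}. The one point you gloss over (invoking ``separatedness'' once) is that for imperfect $K$ the smooth proper $W/K^{perf}$ only descends to a finite purely inseparable extension $K'/K$, so the spread-out $\bar a$ is $\bar s_*\bar f_*1_{\bar X}$ with $\bar f$ smooth proper over a finite radicial cover $U'\to U$ rather than over $U$ itself; tracking this extra radicial layer through the base-change and purity computations is what occupies the bulk of the paper's proof of Lemma~\ref{lemma:boat}, but it is routine and does not affect the validity of your argument.
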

	\begin{proof}
		Formalism of gluing and continuity hold by \ref{motives:6functors}. 
		
		Let $a, a'\in D(K)$, with $\delta:\Spec K \rightarrow U\subset X$ with $a = \delta^*\bar , a'=\delta^*\bar a'$ for some $\bar a, \bar a'\in D_U$. Then, by continuity, if $b\in \mathcal R(\{a\})$ (resp. $b\in Ext^1(\{a\},\{a'\})$), by restricting $U$ to a smaller neighborhood of $\Spec K$ if necessary, there is a $\bar b\in R(\{\bar a\})$ (resp. $\bar b\in Ext^1(\{\bar a\},\{\bar a'e\})$) such that $\delta^*\bar b=b$.
		
		Further, since $\delta^*, \delta^!$ commutes with shifts, it follows that it is enough to show that for any $a\in H(K^{perf})$, there is a neighborhood $U$ of $\Spec K$, $\delta:\Spec K\rightarrow U$, and a $\bar a\in D_U$ with $\delta^*\bar a = a$ such that for any $\epsilon: \Spec L\rightarrow U$, $\epsilon^*\bar a \in H(L^{perf})$. In fact, we can even assume $a = p_*1_X(c)[2c]$ with $p:X\rightarrow \Spec K^{perf}$ proper, smooth. Since $\delta^*$ and $\delta^!$ commute with shifts and twists and $H(k)(c)[2c] = H(k)$ for all $c$, we can even assume $c=0$. Then the claim follows from the next lemma.
	\end{proof}
\begin{lemma}\label{lemma:boat}
	Let $\epsilon:\Spec K\rightarrow S$ be any point and let $a=p_*1_X\in D(K) = DM(\Spec K^{perf})$ with $p:W\rightarrow \Spec K^{perf}$ proper, smooth. Then we can find a neighborhood $U$ of $\Spec K$, $\epsilon:\Spec K\overset{\epsilon_U}\hookrightarrow S$, and an element $\bar a \in D_U$ with $\epsilon^*\bar a = \epsilon^!\bar a = a$ and such that if $\delta:\Spec L \rightarrow U$ is any point of $U$, 
		\begin{align*}
			\delta^*\bar a = p_{L*}1_{W_L}& &\delta^!\bar a = p_{L*}1_{W_L}(-c)[-2c]
		\end{align*}
		with $p_L:W_L\rightarrow \Spec L^{perf}$ proper smooth and $c>0$ if $\delta\ne \epsilon$.
\end{lemma}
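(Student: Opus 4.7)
The plan is to geometrically spread $W \to \Spec K^{perf}$ to a smooth proper family over an open neighborhood of $x := \epsilon(\Spec K)$ in $Y := \overline{\{x\}}$, and then to read off the stalks $\delta^* \bar a$ and $\delta^! \bar a$ using proper base change, smooth base change, and absolute purity from \ref{motives:6functors}.

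Since $K^{perf} = \mathrm{colim}_n K_n$ over the finite purely inseparable subextensions $K_n/K$ and $W/K^{perf}$ is of finite type, for $n$ large $W$ descends to a smooth proper $W_n \to \Spec K_n$. The finite radicial extension $\Spec K_n \to \Spec K$ spreads to a finite universal homeomorphism $f \colon V_n \to V$ over some open neighborhood $V$ of $x$ in $Y$ (standard finite spreading plus openness of the universal-homeomorphism locus, after shrinking $V$). The smooth proper $W_n \to \Spec K_n$ in turn spreads to a smooth proper $\bar p \colon \bar W \to V_n$ by standard spreading of finitely presented morphisms together with openness of the smooth and proper loci (shrinking $V_n$, hence $V$, as needed). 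After one more shrinking I may assume $V_n$ is regular, whence so is $\bar W$ since $\bar p$ is smooth. Set $\bar a := (f \circ \bar p)_* 1_{\bar W} \in D_V$. For the generic point $\epsilon \colon \Spec K \hookrightarrow V$, the extended formalism \ref{gluing:extended4functors} gives $\epsilon^! = \epsilon^*$; iterated proper base change through $\bar W \to V_n \to V$, combined with the separatedness equivalence $DM(K) \cong DM(K^{perf})$ of \ref{motives:6functors}(6), yields $\epsilon^* \bar a \cong p_* 1_W = a$.

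For a point $\delta \colon \Spec L \to V$ distinct from $\epsilon$, let $Z := \overline{\{\delta\}} \cap V$, $i \colon Z \hookrightarrow V$ the closed immersion, and $c := \mathrm{codim}(Z, V) \geq 1$. Proper base change applied successively along $\delta$, $f$, and $\bar p$ gives $\delta^* \bar a \cong p_{L*} 1_{W_L}$ with $p_L \colon W_L \to \Spec L^{perf}$ smooth proper, the finite universal homeomorphism appearing in the base change of $f$ being absorbed by the separatedness equivalence $DM(L) \cong DM(L^{perf})$. For $\delta^! \bar a = \delta_Z^* i^! \bar a$, I use the base-change identity $i^! \bar p_* \cong \bar p_{Z *} i'^!$ to reduce to computing $i'^! 1_{\bar W}$, where $i' \colon \bar W_Z \hookrightarrow \bar W$ is the base change of $i$; shrinking $V$ to an open neighborhood of $\delta$ on which $Z$ is regular (allowed since the extended formalism is insensitive to such restriction, and $\delta$ lies in the open regular locus of $Z$ as its generic point), smoothness of $\bar p$ makes $\bar W_Z$ regular as well, so absolute purity \ref{motives:6functors}(5) identifies $i'^! 1_{\bar W} \cong 1_{\bar W_Z}(-c)[-2c]$. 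One final round of proper base change then delivers $\delta^! \bar a \cong p_{L*} 1_{W_L}(-c)[-2c]$.

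The main obstacle is the initial geometric spreading: one must pass from $W/K^{perf}$ (whose base is not of finite type over $S$) to a smooth proper family over a genuine open in $Y$ without losing any of smoothness, properness, or the identification of the motive. This is handled by routing through an intermediate finite purely inseparable $K_n$, spreading the radicial extension $\Spec K_n \to \Spec K$ and the smooth proper $W_n \to \Spec K_n$ separately, and using separatedness of $DM$ under radicial covers to package the result back into a single object $\bar a$ on $V$; the remaining computation of the stalks is then straightforward bookkeeping with base change and absolute purity.
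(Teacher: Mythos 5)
Your proof is correct and follows essentially the same strategy as the paper's: descend $W/K^{perf}$ to a smooth proper $W_n/K_n$ over a finite purely inseparable $K_n/K$, spread out the radicial extension and the smooth proper family over an open dense $U\subset \overline{\{\epsilon\}}$, take $\bar a$ to be the pushforward of the unit, and then compute $\delta^*$ via proper base change and $\delta^!$ via proper base change plus absolute purity, absorbing the radicial covers through separatedness. Your note that shrinking $V$ around a given $\delta$ to achieve regularity of $Z$ is permitted, because $\bar a$ was fixed on the original $U$ and the extended formalism makes $\delta^!$ insensitive to such restriction, is exactly the justification the paper relies on as well; the paper merely spells out the bookkeeping through an explicit diagram while you present it as a narrative.
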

\begin{proof}
		Given $p:W\rightarrow \Spec K^{perf}$ smooth proper, $p$ is defined over some $K'/K$ finite purely inseparable. That is there is a smooth proper $f$ as in the diagram below, such that $W$ is a pullback of $X$ along $t$. Then we can spread out $f$, $s$, to $\bar f$, $\bar s$ on some irreducible neighborhood $U$ of $\Spec K$ such that $\bar f$ is proper smooth and $\bar s$ is finite radicial as shown in the diagram below.

		We have $\delta^!\bar a =\underline\delta^*i^!\bar a$, where $ \delta = i\circ \underline\delta$ is a factorization of $\delta$ with $\underline\delta$ generic in $Y\subset U$. By restricting $U$ if necessary, we can assume $Y$ as well as $U$ are regular. 
Then by restricting $U$, we also assume that $\bar X, U'$ are regular. Now we can construct the  diagram below as follows: $s_Y, s'$ are pullback of $\bar s$; $Y'_{red}, V'_{red}$ are the reduced subvariety underlying $Y', V'$; $\delta'$ which is a pullback of $\underline\delta$, gives rise to the map $v$. Since $\bar s$ is finite radicial, so is $s\circ s_1$ and hence the perfection $r_1$ factors via $t_1$. $f_Y, f_1, f'_1$ are pullback of $\bar f$.
	\[\hspace{-0.1\linewidth}\begin{tikzcd}
		W\ar[d, "\circ" description, "p"]\ar[rd]& & & & & &X'_1\ar[d, "\circ" description, "f'_1"]
		 \\
		\Spec K^{perf}\ar[rd, "t"]\ar[rdd, "r" below]& X\ar[r]\ar[d, "\circ" description, "f"] &\bar X\ar[d, "\circ" description, "\bar f"]\ar[rr, hookleftarrow, "l_X"]& &X_Y\ar[d, "\circ" description, "f_Y"]\ar[r, hookleftarrow, "v''"]&X_1\ar[d, "\circ" description, "f_1"]\ar[ru, leftarrow, "t''"]
										&\Spec L^{perf}\\
		& \Spec K'\ar[d, "\ast" description, "s"]\ar[r]& U'\ar[r, "i'", hookleftarrow]\ar[d, "\ast" description, "\bar s"] &Y'\ar[rrd, hookleftarrow, "\delta'"]\ar[d, "\ast" description, "s_Y"]\ar[r, "\ast" description, "l", hookleftarrow]	   &Y'_{red}\ar[r,hookleftarrow, "v"] &V'_{red}\ar[d, "\ast" description, "s_1"]\ar[ru, leftarrow, "t_1"]\\
	\begin{minipage}{0.4\linewidth}
		$-\circ\rightarrow$ smooth\\
		$-\ast\rightarrow$ finite radicial
	\end{minipage}\hspace{-0.2\linewidth}
	& \Spec K\ar[r, "\epsilon"]& U\ar[rrrd, leftarrow, "\delta" below]\ar[r, "i", hookleftarrow]	&Y\ar[rrd, hookleftarrow, "\underline\delta"]	&	   &V'\ar[d, "\ast" description, "s'" left]\\
		& &\- & & &\Spec L\ar[ruuu, leftarrow, "r_1" right]	
	\end{tikzcd}\]
	It is clear that:
	\begin{align*}
		\underline\delta^*i^*\bar s_*\bar f_*1_{\bar X} &= \underline\delta_*s_{Y*}i^{'*}\bar f_*1_{\bar X} = s'_*\delta^{'*}i^{'*}\bar f_*1_{\bar X} = s'_*s_{1*}s_{1}^*\delta^{'*}i^{'*}\bar f_*1_{\bar X} = s'_*s_{1*}v^*l^*i^{'*}\bar f_*1_{\bar X} \\
			& = s'_*s_{1*}v^*l^*i^{'*}\bar f_*1_{\bar X} = s'_*s_{1*}v^* f_{Y*}l_X^*1_{\bar X} = s'_*s_{1*}f_{1*}v^{''*}l_X^* 1_{\bar X} = s'_*s_{1*}f_{1*}1_{X_1}
	\end{align*}
	using separatedness and base change. It follows that, restricting to $DM(L^{perf})$, 
	\begin{align*}
		r_1^*\delta^* \bar a &= r_1^*\underline\delta^*i^*\bar s_*\bar f_*1_{\bar X} = r_1^*s'_*s_{1*}f_{1*}1_{X_1} = t_1^*s_1^*s^{'*}s'_*s_{1*}f_{1*}1_{X_1} \\
		&= t_1^*f_{1*}1_{X_1} = f'_{1*}t^{''*} 1_{X_1} = f'_{1*}1_{X'_1}
	\end{align*}
	and therefore we can define $\bar a=\bar s_*\bar f_*1_{\bar X}$, and for $L$, $W_L:= X'_1$ and $p_L:=f'_1$. Note that $f'_1$ is smooth and proper since $\bar f$ is and we are done. 
	
		For $\delta^!(\bar a)$, note that:
	\begin{align*}
		\underline\delta^*i^!\bar s_*\bar f_*1_{\bar X} &= \underline\delta_*s_{Y*}i^{'!}\bar f_*1_{\bar X} = s'_*\delta^{'*}i^{'!}\bar f_*1_{\bar X} = s'_*s_{1*}s_{1}^*\delta^{'*}i^{'!}\bar f_*1_{\bar X} = s'_*s_{1*}v^*l^*i^{'!}\bar f_*1_{\bar X} \\
			& = s'_*s_{1*}v^*l^!i^{'!}\bar f_*1_{\bar X} = s'_*s_{1*}v^* f_{Y*}l_X^!1_{\bar X} = s'_*s_{1*}f_{1*}v^{''*}l_X^! 1_{\bar X} = s'_*s_{1*}f_{1*}1_{X_1}(-c)[-2c]
	\end{align*}
	just as before, where we also use purity and $c=\dim X-\dim X_1=\dim U-\dim Y>0$ if $\delta\ne\epsilon$. It follows that, restricting to $DM(L^{perf})$, 
	\begin{align*}
		\delta^!\bar a = r_1^*\delta^! \bar a &= r_1^*\underline\delta^*i^!\bar s_*\bar f_*1_{\bar X} = r_1^*\underline\delta^*i^*\bar s_*\bar f_*1_{\bar X}(-c)[-2c] = f'_{1*}1_{X'_1}(-c)[-2c].
	\end{align*}
\end{proof}	

\subsection{Relative Artin and relative Picard motive}\label{sec:application:artinpicard}
	Let us consider the triangulated category of cohomological $0$-motives (Artin motives) and $1$-motives (Picard motives) -- the homological version of the same over a field were introduced and studied in \cite{ayoub20091motivic}, while the cohomological version, and over the base is studied in \cite{lehalleur2015motivic}. These are defined as:
	\begin{align*}
		DM^{coh}_{0-mot}(S)&=\vvspan{f_*1_X\big| f:X\rightarrow S\text{ proper of relative dimension}\le 0}\\	
		DM^{coh}_{1-mot}(S)&=\vvspan{f_*1_X\big| f:X\rightarrow S\text{ proper of relative dimension}\le 1}
	\end{align*}
	(In \cite{lehalleur2015motivic}, these categories are denoted as $DA^0(S), DA^1(S)$, we change the notation to avoid confusion. Also note that we allow only finite direct sums - this will be enough to recover the full construction using \ref{compactToNonCompact} and \ref{compactAreGenerated}).
	
	Below, we construct relative Artin motive ($i=0$) and relative Picard motive ($i=1$) in $DM^{coh}_{i-mot}(S)$ for any motive in $DM^{coh}(S)$. The final constructions are not really new -- for $i=0$, this is present in \cite{ayoub2012relative} and in \cite{vaish2016motivic} and for $i=1$ this appears in \cite{lehalleur2015motivic} -- but our method is more streamlined and natural.
	
	For the construction, we need to assume the same over a field, which is the result in \cite{ayoub20091motivic}. Also, while the $i=0$ case is self-contained, for $i=1$ case we depend on a certain $\Hom$ vanishing which is computed in the companion article \cite[3.3.6]{vaish2017weight}. Alternatively, for the purpose of the main application, the reader may skip the section entirely, since \ref{imotivic:overafield} is also proved in \cite[3.12]{lehalleur2015motivic} (with the relevant exactness properties computed in \cite[3.3]{lehalleur2015motivic}).
	
	We first prove the following lemma:
	\begin{lemma}\label{imotivic:overafield}
		Let $K$ be a field and $r:\Spec K^{perf}\rightarrow \Spec K$ the natural map. The equivalence $r^*:DM^{coh}(K)\leftrightarrows DM^{coh}(K^{perf}):r_*$ restricts to an equivalences of categories:
		\[
			r^*:DM^{coh}_{i-mot}(\Spec K)	\leftrightarrows DM^{coh}_{i-mot}(\Spec K^{perf})
		\]
	\end{lemma}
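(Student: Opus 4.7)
The plan is to verify that both $r^*$ and $r_*$ send the generating set of one of the $i$-motivic categories into the $\vvspan{-}$ of the corresponding generators on the other side, after which the equivalence restricts formally since $\vvspan{-}$ is preserved by any triangulated functor that commutes with finite direct sums and retracts.

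For $r^*$, consider a generator $f_*1_X$ of $DM^{coh}_{i-mot}(\Spec K)$ with $f:X\to\Spec K$ proper of relative dimension $\le i$. Base-changing $f$ along $r$ yields a Cartesian square with $X' := X\times_{\Spec K}\Spec K^{perf}$ and morphisms $r':X'\to X$, $f':X'\to\Spec K^{perf}$. Since $f$ is proper, proper base change (see \ref{motives:6functors}) gives $r^*f_*1_X \iso f'_*r'^*1_X = f'_*1_{X'}$. As $f'$ is again proper of relative dimension $\le i$, the image $r^*f_*1_X$ is a generator of $DM^{coh}_{i-mot}(\Spec K^{perf})$.

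For $r_*$, consider a generator $g_*1_Y$ with $g:Y\to\Spec K^{perf}$ proper of relative dimension $\le i$. The key step is to descend $g$ to a finite purely inseparable extension: since $K^{perf}$ is the filtered colimit of such extensions $K'/K$ sitting inside $K^{perf}$ and $g$ is of finite presentation, a standard spreading-out argument (EGA IV.8) produces a proper morphism $g_1:Y_1\to\Spec K'$ of relative dimension $\le i$ whose base change along $r_1:\Spec K^{perf}\to\Spec K'$ is $g$ (dimensions are preserved under purely inseparable extension). Factoring $r = s\circ r_1$ with $s:\Spec K'\to\Spec K$ finite purely inseparable, the separatedness axiom in \ref{motives:6functors} identifies $r_{1*}$ as an inverse to $r_1^*$, and proper base change applied to the Cartesian square defining $g$ yields
\[
r_*g_*1_Y \;=\; s_*r_{1*}g_*1_Y \;=\; s_*r_{1*}r_1^*g_{1*}1_{Y_1} \;=\; s_*g_{1*}1_{Y_1} \;=\; (s\circ g_1)_*1_{Y_1}.
\]
As $s$ is finite, in particular proper of relative dimension $0$, the composite $s\circ g_1$ is proper of relative dimension $\le i$, so $r_*g_*1_Y$ is a generator of $DM^{coh}_{i-mot}(\Spec K)$.

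Since both $r^*$ and $r_*$ are triangulated and respect direct sums and retracts, the two computations above show that they send the generators of the $i$-motivic subcategories into the corresponding $\vvspan{-}$ on the other side; the restriction of the equivalence $(r^*, r_*)$ then follows automatically. The only non-formal input is the descent step for $r_*$, namely that a proper morphism of finite presentation to $\Spec K^{perf}$ arises by pullback from one over a finite purely inseparable extension of $K$; the rest is bookkeeping with proper base change and separatedness.
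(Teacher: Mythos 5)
Your proposal is correct and follows essentially the same route as the paper: check both $r^*$ and $r_*$ on generators, using proper base change for $r^*$ and, for $r_*$, descent of the proper morphism to a finite purely inseparable subextension of $K^{perf}/K$ combined with the separatedness axiom. The paper's proof is just a terser version of the same argument (it cites continuity and separatedness for the descent step where you cite EGA spreading out), so there is nothing to add.
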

	\begin{proof}
		The equivalence for $DM^{coh}(-)$ is obtained by separatedness, continuity and \ref{t-Structure:stabilityOfCohomologicalMotives}, and hence we only need to show that that $r^*$ and $r_*$ preserve $i$-motivic subcategories. This can be verified on generators.
		
		For $f^*$ it is obvious by base change since $\bar X=X\otimes_K K^{perf}$ has the same dimension as $X$. For $r_*$, any such $\bar p:\bar X\rightarrow \Spec K^{perf}$ is obtained as a pullback of $p:X\rightarrow \Spec L$ with $\dim X=\dim \bar X$ and $q:\Spec L\rightarrow \Spec K$ purely inseparable field extension. Hence $f_*\bar p_* 1_{\bar X}= q_*p_*1_X$ by continuity and separatedness, and we are done.
	\end{proof}

	\begin{theorem}\label{application:picard}
		For $i\in\{0, 1\}$, the inclusion $DM^{coh}_{i-mot}(S)\hookrightarrow DM^{coh}(S)$ admits a right adjoint:
		\[
			\omega^i:DM^{coh}(S)\rightarrow DM^{coh}_{i-mot}(S)
		\]
	\end{theorem}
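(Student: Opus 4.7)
The plan is to construct the right adjoint $\omega^i$ as the truncation functor associated to an $m$-structure on $DM^{coh}(S)$ whose negative part is $DM^{coh}_{i-mot}(S)$, obtained by punctual gluing \ref{gluing:mainresult}, and then to invoke Proposition \ref{tIsAdjoint} to extract $\omega^i$.

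The input is the absolute case. Over a perfect field $k$, the inclusion $DM^{coh}_{i-mot}(k) \hookrightarrow DM^{coh}(k)$ admits a right adjoint by (the dual of) \cite[\S 5]{ayoub20091motivic}, hence by \ref{tIsAdjoint} gives an $m$-structure on $DM^{coh}(k)$ with negative part $DM^{coh}_{i-mot}(k)$. Lemma \ref{imotivic:overafield} transports this along the equivalence $r^*$ to yield an $m$-structure on $DM^{coh}(K)$ for any (possibly non-perfect) field $K$.

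Next one must verify continuity of $t$-structures \ref{gluing:continuityForT} for this pointwise $m$-structure. For the negative part, any generator $p_*1_X$ of $DM^{coh}_{i-mot}(K^{perf})$, with $p:X\to\Spec K^{perf}$ proper of relative dimension $\le i$, descends to some finite purely inseparable extension of $K$ and then spreads out to a proper morphism $\bar p:\bar X\to U$ of relative dimension $\le i$ over a neighborhood $U$ of $\Spec K$; proper base change gives $\delta^*\bar p_*1_{\bar X}\in DM^{coh}_{i-mot}(L)$ at every $\delta:\Spec L\hookrightarrow U$, exactly as in the proof of Lemma \ref{lemma:boat}. For the positive part, given $a\in DM^{coh}(K)$ with $\omega^i(a)=0$, one must produce $\bar a$ over some $U$ with $\omega^i(\delta^!\bar a)=0$ at every point of $U$; for $i=0$ this follows by tracking the defining $\hom$-vanishing against the (spread-out) generators using continuity \ref{motives:6functors}(8), while for $i=1$ it requires the more subtle external $\hom$-vanishing established in \cite[3.3.6]{vaish2017weight}.

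Applying \ref{gluing:mainresult} now yields an $m$-structure $(D^{\le}(S), D^{>}(S))$ on $DM^{coh}(S)$. The containment $DM^{coh}_{i-mot}(S)\subset D^{\le}(S)$ is immediate on generators by proper base change. For the reverse containment, a Noetherian induction on $S$ reduces to an open--closed decomposition $j:U\hookrightarrow S$, $i:Z\hookrightarrow S$: any $a\in D^{\le}(S)$ has $j^*a\in D^{\le}(U)=DM^{coh}_{i-mot}(U)$ and $i^*a\in D^{\le}(Z)=DM^{coh}_{i-mot}(Z)$ by induction, and the localization triangle $j_!j^*a\to a\to i_*i^*a\to$ together with the stability of $DM^{coh}_{i-mot}$ under $j_!$ (proved by compactifying $f:X\to U$ proper of relative dimension $\le i$ to $\bar f:\bar X\to S$ and using the localization triangle attached to $\bar f_*1_{\bar X}$) and under $i_*$ (obvious) places $a$ in $DM^{coh}_{i-mot}(S)$. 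Then \ref{tIsAdjoint} furnishes $\omega^i$. The principal obstacle is the continuity of the positive part for $i=1$, which depends essentially on the $\hom$-vanishing from \cite{vaish2017weight}.
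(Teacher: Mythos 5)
Your overall architecture --- absolute $m$-structure over each residue field coming from Ayoub--Barbieri-Viale, continuity of $t$-structures, punctual gluing \ref{gluing:mainresult}, then \ref{tIsAdjoint} --- is exactly the paper's, and you are right (indeed more explicit than the paper) that one must also identify the glued negative part $D^{\le}(S)$ with $DM^{coh}_{i-mot}(S)$ before \ref{tIsAdjoint} yields the adjoint to the stated inclusion; your localization argument for that, using $i_*$- and $j_!$-stability, is fine in outline (though the existence of a compactification of relative dimension $\le i$ deserves justification, or a direct quotation of the stability results of \cite{lehalleur2015motivic}).

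The gap is in the continuity of the positive part for $i=0$, which is the technical heart of the proof. You propose to ``track the defining $\Hom$-vanishing against the spread-out generators using continuity \ref{motives:6functors}(8)'', but continuity only computes $\Hom$s at the \emph{generic} point of a subvariety as a limit over its open neighbourhoods. Your hypothesis $DM^{coh}_{0-mot}(K)\perp a$ lives at the generic point of $U$ and gives no control over $\Hom(b,\delta^!\bar a)$ for a non-generic point $\delta$ with closure $W\subsetneq U$, since $\delta^!=\underline\delta^*\circ i^!$ passes through restriction to $W$ first. The paper's mechanism at such points is different and, crucially, \emph{unconditional in $a$}: reducing to generators $a=p_*1_W$ with $p$ smooth proper, Lemma \ref{lemma:boat} produces an extension $\bar a$ with $\delta^!\bar a=p_{L*}1_{W_L}(-c)[-2c]$, where $c>0$ is the codimension picked up via absolute purity, and then $\Hom(g_*1_Z,\,p_{L*}1_{W_L}(-c)[-2c])$ vanishes for every $0$-motivic generator $g_*1_Z$ by the vanishing of motivic cohomology $H^{p,q}$ in negative Tate twists $q<0$ (property \ref{motives:6functors}(7)). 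The hypothesis $\omega^0(a)=0$ is used only at the generic point itself, where $\epsilon^!\bar a=a$. Without the purity twist and the motivic-cohomology vanishing, your $i=0$ case has no proof; you correctly flag the analogous external $\Hom$-vanishing from \cite{vaish2017weight} as the input for $i=1$, but the $i=0$ case needs its own (simpler, but still nontrivial) vanishing of the same nature.
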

	\begin{proof}
		Existence of right adjoint is same as existence of $t$-structure with $DM^{coh}_{i-mot}(S)$ as the negative part, by \ref{tIsAdjoint}. 
		By \cite[2.4.1 and 2.4.7]{ayoub20091motivic} there are left adjoints
		\begin{align*}
			L\pi_0	&:DM^{eff}_c(K)\rightarrow DM_{\le 1, c}^{eff}(K)\\
			LAlb	&:DM^{eff}_c(K)\rightarrow DM_{\le 1, c}^{eff}(K)
		\end{align*}
		to the natural inclusion $DM_{\le i, c}^{eff}(K)\hookrightarrow DM^{eff}_c(K)$.
		
		Here $DM^{eff}_c(K)$ is the category of (compact objects in) effective motives of Voevodsky (that is $DM^{eff}_c(K):=DM^{eff}_{gm}(K)$), and $DM^{eff}_{\le i, c}(K)$ is finitely generated by motives of varieties of dimension $\le i$. That is:
		\begin{align*}
			DM^{eff}_c(K) &= \vvspan{p_\#1_X\big| p:X\rightarrow \Spec K\text{ projective}}\\
			DM^{eff}_{\le i,c}(K) &= \vvspan{p_\#1_X\big| p:X\rightarrow \Spec K\text{ projective}, \dim X\le i}
		\end{align*}
		 (a priori in \cite{ayoub20091motivic} one restricts to smooth schemes over $k$, but  by \cite[1.25]{lehalleur2015motivic} this is not a restriction).
		Using Verdier duality, and noting that $\mathbb D p_\#1_X = p_*1_X$, we get a right adjoint:
		\[
			\omega^i_K:DM^{coh}(K)\rightarrow DM^{coh}_{i-mot}(K)
		\]
		to the fully faithful inclusion $DM^{coh}_{i-mot}(K)\hookrightarrow DM^{coh}(K)$ (see \cite[3.12]{lehalleur2015motivic} for the detailed argument). Then we have a $t$-structure on $DM^{coh}(K)$ with 
			\begin{align*}
				D^{\le}(K) & = D^{\le}_i(K) := DM^{coh}_{i-mot}(\Spec K)\cong DM^{coh}_{i-mot}(\Spec K^{perf}) \\
				D^{>}(K) & =D^{>}_i(K):=\{a\in DM^{coh}(K^{perf})\big| D^{\le}_i(K)\perp a\}.
			\end{align*}
		
		Now formalism of gluing and continuity for $Y\mapsto DM^{coh}(Y)$, $K\mapsto DM^{coh}(K^{perf})$ is \ref{coh:continuity}. We show that the continuity for $t$-structure holds for $(D^{\le}(K), D^{>}(K))$ defined above.
		
		\emph{Continuity for $D^{\le}(K)$}: Given any generator $a=p_*1_{X}\in D^{\le}_i(K)$, $p:X\rightarrow \Spec K$ proper of dimension $\le i$, we spread it out to a morphism $\bar p:\bar X\rightarrow U$ of relative dimension $\le i$ in some neighborhood of $U$. Then $\epsilon^*\bar p_*1_{\bar X} = p_{L*}1_{\bar X_L }\in DM^{coh}_{i-mot}(L)$ for any $\epsilon:\Spec L\hookrightarrow U$, using proper base change, since $\dim (\bar X_L)\le i$ as well. 
		
		\emph{Continuity for $D^{>}(K)$}: 
		
		-- Case $i=0$ : for any $a\in {D^{>}_0}\subset D(K)$, we can find an extension $\bar a\in DM^{coh}(U)$, such that $\epsilon^!\bar a = a$ for $\epsilon:\Spec K\rightarrow U$ generic by continuity. We can also assume $U$ is irreducible. We will show that $\delta^!\bar a\in D^{>}(K)$ for some extension $\bar a$ for \emph{any} $a\in D(K)=DM^{coh}(K^{perf})$, provided $\delta\ne \epsilon$.
		
			But it is enough to assume $a = p_*1_W$ for some $p:W\rightarrow \Spec K^{perf}$ proper smooth, because such objects finitely generate $D(K)$. But then by \ref{lemma:boat} we can find $\bar a$ on some $U$ such that $\delta^!\bar a=p_{L*}1_{X_L}(-c)[-2c]$ for some $p_L:X_L\rightarrow \Spec L^{perf}$ proper smooth, $c>0$.

	 We want to show that $DM^{coh}_{0-mot}(L^{perf})\perp p_{L*}1_{X_L}(-c)[-2c]$.  It is enough to take a generator $b=g_*1_Z$ with $Z\rightarrow L^{perf}$ proper, smooth, of relative dimension $0$, in particular, one can take $Z=\Spec M$, with $g:Z\rightarrow L^{perf}$, $M/L^{perf}$ a finite separable extension. Then, since $g$ is both proper and smooth, 
	\[
		\hom (g_*1_Z, p_{L*}1_{X_L}(-c)[-2c]) = \hom (1_Z, g^*p_{L*}1_{X_L}(-2c)[-c]) = \hom (1_Z, p_{M*}1_{X_M}(-c)[-2c]) = 0
	\]
	where $p_M:X_M\rightarrow \Spec M$ is the pullback of $X$ to $\Spec M$, and this vanishes by vanishing of $H^{-c, -2c}(X_M)$ (since $c>0$).
		
		\emph{Case $i=1$}: $D^{\le }_0(K)\subset D^{\le}_1(K)$. Therefore $D^{>}_0(K)\supset D^{>}_1(K)$. By \cite[3.3.6]{vaish2017weight},
		for any $a\in D_0^>(K^{perf})$, we can find an extension $\bar a$ over some $U$, such that $\epsilon^!\bar a = a$ for $\epsilon:\Spec K\rightarrow U$ generic and $b\perp \delta^!\bar a$ for any $b=p'_*1_X[i]$ for $p':X'\rightarrow \Spec K^{perf}$ with $\dim X'\le 1$, any $i\in \Z$, and any $\delta:\Spec L\rightarrow U$, $\delta\ne \epsilon$. 
		If $A[n]\perp B$ for all $n$, then $Ext(\cup_n A[n])\perp B$, and hence $\vvspan A\perp B$. Using this for $A$ as the collection of $b$ as above, and $B=\delta^!\bar a$, we see that $D_1^{\le }(L) \perp \delta^!\bar a$ as required.

 	Thus using \ref{gluing:mainresult} and \ref{tIsAdjoint} we get the requisite adjoint.
	\end{proof}
	
\subsection{$1$-motivic $t$-structure over a base}\label{sec:application:1motivic}
	In \cite[\S 3.2]{ayoub2011nmotivic}, analogue of the still conjectural motivic $t$-structure on $DM(k)$ is constructed on a full triangulated subcategory, the subcategory of effective $1$-motives. On compact objects, this also follows from \cite{MR2102056}. This can be dualized to yield a cohomological version, a $t$-structure on the category $DM^{coh}_{1-mot}(k)$ finitely generated by motives of curves. In \cite{lehalleur2015motivic}, this construction is extended to the relative setting, and the $t$-structure is analogously constructed on the triangulated subcategory of $1$-motives over a base. 
	 
	We use \ref{gluing:mainresult} to verify that the $t$-structure of \cite{lehalleur2015motivic} in the relative setting restricts to compact objects, thereby strengthening their main result.
%
%
\begin{para}
	We want to define the $1$-motivic $t$-structure on the subcategory $DM_{1-mot}^{coh}(X)$. As a first step we need the notion of Grothendieck's six functors \ref{gluing:4functors}. Since in \ref{application:picard} $DM_{1-mot}^{coh}(X)$ is constructed as the negative part of a $t$-structure obtained by gluing, it is preserved under $f^*,f_!$. For $j$ an open immersion, we define ``$j_*$'' to be $\omega^1\circ j_*$, and for $i$ a closed immersion ``$i^!$'' to be $\omega^1\circ i^!$. To avoid confusion, we will use $i^!$ and $j_*$ only in the traditional sense in motives and not in the sense of \ref{gluing:4functors}. 	
\end{para}

	A straightforward verification now shows that this setup satisfies \ref{gluing:4functors}:

\begin{lemma}[Extended formalism of gluing]\label{1motive:formalism}
	Consider the assignment $W\mapsto D_W:=DM^{coh}_{1-mot}(W)$, and, for any open immersion $j:U\hookrightarrow W$ (resp. closed immersion $i:Z\hookrightarrow W$) the natural functors:
	\begin{align*}
		j^*: DM^{coh}_{1-mot}(W) &\rightarrow  DM^{coh}_{1-mot}(U): \omega^1\circ j_*, j_! \\ i^*, \omega^1\circ i^!: DM^{coh}_{1-mot}(W) &\rightarrow  DM^{coh}_{1-mot}(Z): i_*
	\end{align*}
	Then this setup satisfies the notion of Grothendieck's six functors \ref{gluing:4functors}. If we define 
	\begin{align*}
		D(K):=DM^{coh}_{1-mot}(K)& &\epsilon^*:DM^{coh}_{1-mot}(W)\rightarrow DM^{coh}_{1-mot}(K)
	\end{align*}
	for any point $\epsilon:\Spec K\hookrightarrow W$. Then this setup satisfies extended formalism \ref{gluing:extended4functors}.
\end{lemma}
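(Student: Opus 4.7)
The strategy leverages the fact that $DM^{coh}_{1-mot}(W)\subset DM^{coh}(W)$ is the negative part of the glued $t$-structure of \ref{application:picard}, equivalently, a full triangulated subcategory whose inclusion admits $\omega^1_W$ as right adjoint. Since the ambient $DM^{coh}$ already satisfies the extended four-functor formalism by \ref{coh:continuity}, the task reduces to (i) checking which ambient functors restrict directly to the $1$-motivic subcategory, and (ii) showing that the remaining right-adjoint functors can be corrected by post-composing with $\omega^1$, with all axioms then following by Yoneda-style manipulations.

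First I would check preservation of $DM^{coh}_{1-mot}$ under $j^*, j_!, i^*, i_*$. The stability under $f^*$ and $f_!$ is recorded in the paragraph preceding the lemma (it is built into the definition \ref{gluing:spreadingOut} of the glued $t$-structure), while $i_*$ for a closed immersion preserves the generators $f_*1_X$ because $if$ remains proper of the same relative dimension $\le 1$. Next I would establish the Yoneda commutations $\omega^1\circ f\cong f\circ\omega^1$ for $f\in\{j^*, i^*, i_*\}$; for example, for $b\in DM^{coh}_{1-mot}(U)$ and $x\in DM^{coh}(W)$,
\[
\hom(b,\omega^1 j^* x) = \hom(b,j^* x) = \hom(j_! b, x) = \hom(j_! b,\omega^1 x) = \hom(b, j^*\omega^1 x),
\]
using that $j_! b\in DM^{coh}_{1-mot}(W)$ by Step 1 and the $(\iota,\omega^1)$-adjunction; the case of $i_*$ uses preservation under $i^*$, and the case of $i^*$ uses preservation under $i_*$.

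With preservation and these commutations in hand, verifying the axioms of \ref{gluing:4functors} is essentially formal. The adjunction $(j^*,\omega^1 j_*)$ follows by restricting the ambient $(j^*,j_*)$-adjunction and using $\hom(a,x)=\hom(a,\omega^1 x)$ for $a\in DM^{coh}_{1-mot}$; similarly for $(i_*,\omega^1 i^!)$, while $(j_!,j^*)$ and $(i^*,i_*)$ come directly from the ambient ones. Composition isomorphisms such as $(\omega^1 g_*)\circ(\omega^1 f_*)\cong \omega^1(gf)_*$ follow from the ambient composition combined with the Yoneda commutations. The natural transformations $f^!\to f^*$ and $f_!\to f_*$ descend from ambient by applying $\omega^1$. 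For the open/closed gluing axioms \ref{gluing:openClosed}, the vanishings ($j^* i_*=0$, and hence $i^*j_!=0$ and $\omega^1 i^!\omega^1 j_*=\omega^1\omega^1 i^! j_*=0$) are inherited; the triangle $j_! j^* a\to a\to i_* i^* a\to$ lies in $DM^{coh}_{1-mot}(W)$ already by Step 1; and the triangle $i_*\omega^1 i^! a\to a\to \omega^1 j_* j^* a\to$ is obtained by applying the triangulated functor $\omega^1$ to the ambient localization triangle $i_* i^! a\to a\to j_* j^* a\to$ for $a\in DM^{coh}_{1-mot}(W)$, using $\omega^1 a=a$ and the Step 2 commutation $\omega^1 i_*\cong i_*\omega^1$.

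Finally, the extended formalism \ref{gluing:extended4functors} is immediate once one verifies that $\epsilon^*$ preserves $DM^{coh}_{1-mot}$, which follows by proper base change on generators $p_*1_X$ (exactly as in the final assertion of \ref{t-Structure:stabilityOfCohomologicalMotives}, noting $\dim X_K\le \dim X/S + 0\le 1$); the compatibility $\epsilon_Z^*=\epsilon_{Z'}^*\circ f^*$ is inherited directly from the ambient category. I expect the principal technical point to be Step 2, the Yoneda commutations, which rest crucially on the preservation results for $j_!, i_*, i^*, j^*$ and hence ultimately on the construction of $DM^{coh}_{1-mot}$ in \ref{application:picard} as a glued subcategory; once these commutations are in place, the remainder is routine bookkeeping.
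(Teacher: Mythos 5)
Your proposal is correct and follows essentially the same route as the paper: exploit that $DM^{coh}_{1-mot}$ is the negative part of the glued $t$-structure of \ref{application:picard}, note preservation under $j^*,j_!,i^*,i_*$, correct $j_*$ and $i^!$ by $\omega^1$, and verify all axioms by adjunction/Yoneda together with applying the triangulated functor $\omega^1$ to the ambient localization triangles. The only cosmetic difference is the composition law, where the paper kills the error term via $g_*(DM^{>}(V))\subset DM^{>}(W)$ while you use the adjunction computation directly; these are equivalent.
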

\begin{proof}
	Let us denote the positive part of the $t$-structure on $W$ as constructed in \ref{application:picard} as $DM^>(W)$. Since this is a $t$-structure constructed by gluing, we have $\omega^1\circ f^* = f^*$, $\omega^1\circ f_!=f_!$ when applied to objects in $DM^{coh}_{1-mot}(-)$, for $f\in \{i,j\}$.
	
	The construction \ref{gluing:4functors}$(i)$ is given to us and  \ref{gluing:4functors}$(iii)$ is then $\omega^1$ applied to the same result on usual pullbacks and pushforwards. The composition law in \ref{gluing:4functors}$(ii)$ is verified below for $f_*$, case for $f^!$ is similar, and that for $f^*, f^!$ even more direct: 
	
	Let $U\overset f\hookrightarrow V\overset g\hookrightarrow W$ be any pair of immersions. Let $a\in DM^{coh}_{1-mot}(U)$ and $b\in DM^{>}(V)$ be defined by the triangle: $\omega^1(f_*a)\rightarrow f_*a\rightarrow b\rightarrow$. Applying the triangle-preserving $\omega^1 g_*$ we get 
		\[
			\omega^1 g_*\omega^1 f_*a\rightarrow \omega_1 g_*f_*a\rightarrow \omega^1g_*b\rightarrow.
		\]
		But the last term vanishes, since $g_*(DM^{>}(V))\subset DM^{>}(W)$ by construction. Hence the first map is an isomorphism, as required.
		
		Finally we verify the formalism of gluing: 
		
		If $a\in DM^{coh}_{1-mot}(W)$ and $b\in DM^{coh}_{1-mot}(U)$ we have $\hom(j^*a,b)=\hom(a,j_*b)=\hom(a,\omega^1\circ j_*b)$, the second coming from the fact that $a\in DM^{coh}_{1-mot}(W)$. This proves adjunction \ref{gluing:openClosed}$(i)$, the other cases of adjunction being similar. Vanishings \ref{gluing:openClosed}$(ii)$ follow directly from that of $j^*i_*$ and hence by adjunction for other cases. Localization \ref{gluing:openClosed}$(iii)$(resp. isomorphisms \ref{gluing:openClosed}$iv$) can be obtained by applying $\omega^1$ to the usual localization triangle (resp. isomorphism), noting that $\omega^1\circ i_*=i_*\circ \omega^1$ (resp. $\omega^1\circ j^*=j^*\circ \omega^1$).
		
		The case for extended formalism is now obvious using the continuity of the negative part of the $t$-structure in \ref{application:picard}.
\end{proof}

We now recall briefly the results from \cite{lehalleur2015motivic}:
\begin{para}[Relative $1$-motivic $t$-structure]
	Let $S$ be any scheme. Then \cite[1.1]{lehalleur2015motivic} defines:
	\begin{align*}
		DA^1(S) := \vvtspan{f_*1_X\big| f:X\rightarrow S\text{ proper of relative dimension}\le 1}\\
		DA_1(S) := \vvtspan{f_\#1_X\big| f:X\rightarrow S\text{ proper of relative dimension}\le 1}\\
		DA^1_c(S) : = \vvspan{f_*1_X\big| f:X\rightarrow S\text{ proper of relative dimension}\le 1}\\
		DA_{1,c}(S) : = \vvspan{f_\#1_X\big| f:X\rightarrow S\text{ proper of relative dimension}\le 1}
	\end{align*}
	(In particular, $DM^{coh}_{1-mot}(S) = DA^1_c(S)$. Note that we denote $\ll{-}\gg$ of \cite{lehalleur2015motivic} by $\vvtspan{-}$).
	
	One also defines (see \cite[Appendix A]{lehalleur2015motivic}):
	\begin{align*}
		\mathcal M_1(S) := \mathcal R((\text{category of Deligne }1-\text{motives over }S)\otimes \Q)
	\end{align*}
	the rational version of category of Deligne $1$-motives. Given any scheme map $f:T\rightarrow S$, there is a pullback map $f^*:\mathcal M_1(S)\rightarrow \mathcal M_1(T)$. Then, \cite[2.13 and 1.27]{lehalleur2015motivic} gives us a functor:
	\[
		\Sigma^\infty_{coh}: \mathcal M_1(S) \xrightarrow{\Sigma^\infty} DA_{1,c}(S)\xrightarrow[\cong]{(-1)}DA^1_c(S)
	\]
	such that for any morphisms of schemes $f:T\rightarrow S$, we have the compatibility \cite[2.14]{lehalleur2015motivic}:
	\[
		R_f: f^*\Sigma^{\infty}_{coh}\overset\cong\longrightarrow \Sigma^\infty_{coh}f^* : \mathcal M_1(S)\rightarrow DA^1_c(T).
	\]
	Finally, one defines a $t$ structure $t_{MM}^1(S)$ on $DA^{1}(S)$ whose heart is denoted as $\mathbf{MM}^1(S)$ (see \cite[4.9]{lehalleur2015motivic}), such that $\Sigma^\infty_{coh}(\mathcal M_1(S))\subset \mathbf{MM}^1(S)$ (see \cite[4.22]{lehalleur2015motivic}). 

	If $k$ is a field, this $t$-structure is already due to \cite{ayoub2011nmotivic} and on compact objects due to \cite{MR2102056}. We have an equivalence of $t$-categories (see \cite[4.19]{lehalleur2015motivic}, using \cite[1.27]{lehalleur2015motivic}):
	\[
		\Sigma^\infty_{coh}:(D^b(\mathcal M_1(k)), \text{standard }t\text{-structure})\xrightarrow{\cong} (DA^1_c(k), t_{MM}^1(k))
	\]
\end{para}

We now prove the main result of this section:
\begin{theorem}\label{application:t1MMrespectsCompact}
	$t^1_{MM}(S)$ restricts to a $t$-structure on $DA^1_c(S)=DM^{coh}_{1-mot}(S)$.
\end{theorem}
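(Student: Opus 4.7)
The strategy is to reconstruct $t^1_{MM}(S)|_{DA^1_c(S)}$ directly via the punctual gluing machinery of Theorem~\ref{gluing:mainresult} applied to the extended six-functor formalism of Lemma~\ref{1motive:formalism}, and then identify the resulting glued $t$-structure with the restriction of $t^1_{MM}(S)$ using Proposition~\ref{gluing:unglue}.

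First, I would check that the assignments $Y \mapsto DA^1_c(Y)$ and $K \mapsto DA^1_c(K^{perf})$ satisfy continuity in the sense of \ref{gluing:continuity}. Lemma~\ref{1motive:formalism} already provides the extended formalism of gluing, so the remaining content is that generators of the form $f_*1_X$ (with $f$ proper of relative dimension $\le 1$) can be spread out: given such an object over $\Spec K$, one realizes it as the pullback of an analogous family $\bar f_*1_{\bar X}$ over some open neighborhood $U$, in complete analogy with the argument used in Lemma~\ref{lemma:boat}. Full and faithful continuity then follows from the corresponding statement for $DM^{coh}(-)$ (Proposition~\ref{coh:continuity}) since $DA^1_c$ is a full subcategory preserved by the relevant functors.

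Next, at each Zariski point $\epsilon: \Spec K \hookrightarrow S$, I equip $DA^1_c(K^{perf})$ with the $1$-motivic $t$-structure of \cite{ayoub2011nmotivic} / \cite{MR2102056}, identified via the equivalence $\Sigma^\infty_{coh}: D^b(\mathcal{M}_1(K^{perf})) \xrightarrow{\cong} DA^1_c(K^{perf})$ with the standard $t$-structure. Continuity of the $t$-structure \ref{gluing:continuityForT} for the negative part is then clean: writing $a = \Sigma^\infty_{coh}(M^\bullet)$ with $M^\bullet$ supported in degrees $\le 0$, one spreads out each Deligne $1$-motive $M^i$ to $\bar M^i$ over some neighborhood $U$ (a standard Noetherian spreading argument on group schemes), and the compatibility $R_f: f^* \Sigma^\infty_{coh} \cong \Sigma^\infty_{coh} f^*$ forces $\delta^*\bar a \in D^{\le 0}(L)$ at every point $\delta: \Spec L \hookrightarrow U$. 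For the positive part one spreads out analogously but must argue about the $!$-pullback $\omega^1 \circ \delta^!$ appearing in Lemma~\ref{1motive:formalism}; the cleanest route is to characterize $D^{>0}(K)$ as the right orthogonal to $D^{\le 0}(K)$ and then deduce continuity of the positive part from continuity of the negative part together with the Noetherian-induction argument already internalized in the proof of \ref{gluing:mainresult}.

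Finally, Theorem~\ref{gluing:mainresult} produces a $t$-structure $(D^{\le}(S), D^{>}(S))$ on $DA^1_c(S)$. To identify it with $t^1_{MM}(S)|_{DA^1_c(S)}$, observe that both restrict to the standard $t$-structure on $D^b(\mathcal{M}_1(K^{perf}))$ at each point (by construction on our side, and by \cite[\S 4]{lehalleur2015motivic} on the other), and that $t^1_{MM}(Y)$ on $DA^1_c(Y)$ satisfies the preservation properties under $f^*$ and $\omega^1 \circ f^!$ for immersions that are required by Proposition~\ref{gluing:unglue}. Invoking that proposition shows that $t^1_{MM}(S)|_{DA^1_c(S)}$ \emph{is} the glued $t$-structure, which in particular shows that truncations of compact objects remain compact.

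\textbf{Main obstacle.} The delicate step will be continuity of the positive part, because the relevant $!$-pullback is $\omega^1 \circ \delta^!$ rather than bare $\delta^!$, so one does not immediately control it via the $\Sigma^\infty_{coh}$ equivalence. I expect to handle this either by the orthogonality characterization sketched above, or by establishing a spread-out lemma analogous to Lemma~\ref{lemma:boat} that computes $\omega^1 \circ \delta^!$ of a spread-out object explicitly in terms of Deligne $1$-motives on the residue field; a secondary difficulty is the comparison with the $t$-structure of \cite{lehalleur2015motivic}, which is built by non-compact techniques and must be matched with our punctually-defined one on the intersection $DA^1_c(S)$.
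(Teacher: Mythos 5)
Your overall strategy — reconstruct the $t$-structure on $DA^1_c(S)$ by punctual gluing via Theorem~\ref{gluing:mainresult}, using the formalism of Lemma~\ref{1motive:formalism}, and then match it against $t^1_{MM}(S)$ — is indeed the one the paper uses. But both of the places you yourself flag as delicate are gaps in the proposal as written, and the single device that closes them in the paper's proof is missing.

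First, your route to continuity of the positive part does not go through. You propose to ``characterize $D^{>}(K)$ as the right orthogonal to $D^{\le}(K)$ and then deduce continuity of the positive part from continuity of the negative part together with the Noetherian-induction argument already internalized in the proof of \ref{gluing:mainresult}.'' This is backwards: \ref{gluing:mainresult} \emph{consumes} continuity of both parts as input; it cannot be invoked to supply continuity of the positive part. Orthogonality to $D^{\le}(L)$ is the right target, but to control $\omega^1\delta^!\bar a$ at an arbitrary point $\delta$ you first need to know that the spread-out $\bar a$, and also the spread-outs $\bar z$ of objects $z\in D^{\le}(L)$, actually lie in the \emph{global} subcategories $D^{>t^1_{MM}}(U)$ and $D^{\le t^1_{MM}}(V)$ respectively — and only then can you quote the exactness of $j^*$ and $\omega^1 i^!$ for $t^1_{MM}$ (which is \cite[4.13]{lehalleur2015motivic}) to conclude $\hom(\delta_2^*\bar z, \delta_2^*j^*\omega^1 i^!\bar a) = 0$. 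These ``additional claims'' (that the spread-outs land in the global $t^1_{MM}$ parts, not merely the pointwise ones) are proved inductively in tandem with continuity in the paper; they are not a corollary of anything in your outline and there is no substitute for them.

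Second, your appeal to Proposition~\ref{gluing:unglue} for the final identification is circular: to apply \ref{gluing:unglue} with $D_Y = DA^1_c(Y)$ you must already have a $t$-structure on $DA^1_c(Y)$ for every subscheme $Y$, i.e.\ you must already have the theorem. (Applying it instead with $D_Y = DA^1(Y)$ gives information about the non-compact category, not about where the truncations of compacts land.) The paper instead shows directly, by Noetherian induction and the localization triangle, that the glued $D^{\le}(S)$ and $D^{>}(S)$ are contained in $D^{\le t^1_{MM}}(S)$ and $D^{>t^1_{MM}}(S)$; the induction step again leans on the ``additional claims'' above plus the $t^1_{MM}$-exactness of $j^*$ and $\omega^1 i^!$. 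So the missing idea in your proposal is precisely to carry, throughout the continuity proof, the strengthened statement that spread-out objects remain in the global $D^{\le t^1_{MM}}$ and $D^{>t^1_{MM}}$ — without it neither the positive-part continuity nor the identification with $t^1_{MM}(S)$ can be completed.
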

\begin{proof}
	We will construct a $t$-structure on $DA^1_c(S)=DM^{coh}_{1-mot}(S)$ using \ref{gluing:mainresult} and verify that it is the same as $t^1_{MM}(S)$ restricted to $DA^{1}_c(S)$. Let $(D^{\le t_{MM}^1}(S), D^{> t_{MM}^1}(S))$ denote the subcategories involved in the $t$-structure $t_{MM}^1(S)$.
	
	\underline{\emph{Construction}}: Extended formalism of gluing holds for $DA^1_c(S)$ due to \ref{1motive:formalism}. Over a field, $k$, we have the identification:
	\[
		\Sigma^\infty_{coh}:(D^b(\mathcal M_1(k)), \text{standard }t\text{-structure})\xrightarrow{\cong} (DA^1_c(k), t_{MM}^1(k))
	\]
	of \cite[4.19]{lehalleur2015motivic} (using \cite[1.27]{lehalleur2015motivic}). We let $(D^{\le}(k), D^{>}(k))$ denote the standard $t$-structure on the left and $(DA^{\le}(k), DA^{>}(k))$ the corresponding $t$-structure, the restriction of $t_{MM}^1(k)$, on the right. We will show continuity for this $t$-structure in the sense of \ref{gluing:continuityForT}.
	
	\emph{Continuity for $a\in DA^{\le}(k)$}: 
	
	 Fix $\epsilon:\Spec k\hookrightarrow S$, and let $Y$ be the closure of $\epsilon$. Let $a\in DA^{\le}(k)$ 
	 . We want to find a $\bar a\in DA^1_c(U)$ for some neighborhood $U$ of $\Spec k$ in $Y$ such that for any $\delta:\Spec l\hookrightarrow S$, $\delta^*(\bar a)\in DA^{\le}(l)$, while $\epsilon^*(\bar a)=a$. 
	 
	 $\ast$ \emph{Additional claim:} To aid in the later part of the proof, we will additionally show that $\bar a\in D^{\le t_{MM}^1}(U)$.

	 Now $a=\Sigma^{\infty}_{coh}(a')$ with $a'\in D^b(\mathcal M_1(k))$. We induct on the number of $i$ such that $H^i(a')\ne 0$.
		
	$\ast$ \emph{Base case}: $a'=x'[i]$, and $x'\in \mathcal M_1(k)$ with $i\ge 0$. Then by spreading out (see \cite[A.11]{lehalleur2015motivic}), we can find a $U\subset Y$ open dense and $\bar x'\in \mathcal M_1(U)$, such that the restriction $\epsilon^*(\bar x')=x'$. Let $\bar a = \Sigma^\infty_{coh}(\bar x'[i])$. Using the commutation isomorphism $\mathcal R_f:\Sigma^\infty_{coh}f^*\cong f^*\Sigma^\infty_{coh}$ (see  \cite[2.14]{lehalleur2015motivic}) we see that the restriction $f^*$ on $1$-motives corresponds to the pullback $f^*$ on $DA^{coh}_{1-mot}(-)$. 

	Therefore:
		\[
			\delta^*\bar a = \delta^*\Sigma^\infty_{coh}(\bar x'[i]) = \Sigma^\infty_{coh}(\delta^*(\bar x') [i])\in \Sigma^{\infty}_{coh}(\mathcal M^1(l)[i])\subset DA^{\le}(l)
		\]
		since $i\ge 0$. Also $\epsilon^*\bar a = \Sigma^\infty_{coh}(\epsilon^*\bar x'[i])= \Sigma^\infty_{coh}(x'[i]) = a$ and we are done.
	The additional claim is also immediate because $\Sigma^\infty_{coh}(\mathcal M_1(U))\subset \mathbf{MM}^1(U)$ (see \cite[4.22]{lehalleur2015motivic}) which is the heart of $t^1_{MM}(U)$.
	
	$\ast$ \emph{Induction step}: let $i$ be the smallest integer such that $H^{-i}(a')\ne 0$. Then there is a triangle:
	\[
		b':=\tau_{\le -i-1}a'\longrightarrow a' \longrightarrow c':=H^{-i}(a')\overset{h'}\longrightarrow
	\]
	with $b',c'\in D^{\le }(k)$ have fewer non-vanishing $H^i$s. By induction hypothesis, we can find a $\bar b'$, and $\bar a'$ on some $U$ satisfying requirements of continuity. By continuity of $DM^{coh}_{1-mot}(k)$, by restricting $U$ if needed, we can find an extension:
	\[
		\Sigma^\infty_{coh}(\bar b')\longrightarrow \bar a \longrightarrow \Sigma^\infty_{coh}(\bar c')\overset{\bar h}\longrightarrow 
	\]
	such that $\epsilon^*(\bar h) = \Sigma^{\infty}_{coh}(h')$. The additional hypothesis on the first and last term is that they are in $D^{\le t_{MM}^1}(U)$, and hence the same is true about $\bar a$. Now, 
	\[
		\epsilon^*(\bar a) = CoCone(\epsilon^*\bar h) \cong CoCone(\Sigma^{\infty}_{coh}(h')) \cong \Sigma^{\infty}_{coh}(a') = a
	\]
	while we have a triangle:
	\[
		\delta^*(\Sigma^\infty_{coh}(\bar b'))\longrightarrow \delta^*\bar a \longrightarrow \delta^*(\Sigma^\infty_{coh}(\bar c')) \overset{\bar h}{\longrightarrow}
	\]
	for any $\delta$. The first and the last term lie in $DA^{\le}(l)$ by induction hence so does the middle term, and we are done.
		
	\emph{Continuity of $a\in DA^{>}(k)$}: 
	
	 Fix $\epsilon:\Spec k\hookrightarrow S$, and let $Y$ be the closure of $\epsilon$. Let $a\in DA^{>}(k)$ 
	 . We want to find a $\bar a\in DA^1_c(U)$ for some neighborhood $U$ of $\Spec k$ in $Y$ such that for any $\delta:\Spec l\hookrightarrow S$, $\delta^!(a)\in DA^{>}(l)$ 
	 , while $\epsilon^*(a)=\epsilon^!(a)=a$. Now $a=\Sigma^{\infty}_{coh}(a')$ with $a'\in D^b(\mathcal M_1(k))$. 
	 
	 $\ast$ \emph{Additional claim:} To aid in the later part of the proof, we will additionally show that $\bar a\in D^{> t_{MM}^1}(U)$.
	 
	As before, we induct on the number of $i$ such that $H^i(a')\ne 0$.
		
	$\ast$ \emph{Base case}: The setup is the same with $a'=x'[i]$, but $i<0$. Then as before we have $\bar x'\in \mathcal M_1(U)$, such that the restriction $\epsilon^!(\bar x')=\epsilon^*(\bar x') = x'$ and we let $\bar a = \Sigma^\infty_{coh}(\bar x'[i])$. Fix $\delta: \Spec l\hookrightarrow U$, and let $W$ denote the closure of $\delta$ in $U$, that is $W=Y\cap U$. Let $\delta$ break up as the composition $\delta:\Spec l\overset{\delta_1}\hookrightarrow W \overset i\hookrightarrow U$. We want to show that $\delta^!(\bar a) := \delta_1^*\omega^1i^!(\bar a)$ is in $DA^{>}(l)$, whence $\bar a$ will be the required extension.
	
	Now note that $\bar a \in \Sigma^{\infty}_{coh}(\mathcal M_1(U))[i] \subset \mathbf{MM}^1(U)[i]\subset D^{>t_{MM}^1}(U)$ since $i<0$, using \cite[4.22]{lehalleur2015motivic}. By \cite[4.13]{lehalleur2015motivic} $\omega^1 i^!$ preserves $D^{>t_{MM}^1}(-)$ and hence it follows that $\omega^1i^!(\bar a)\in D^{>t_{MM}^1}(U)$. 
	
	Let $z\in DA^{\le}(l)$. By the additional claim in the continuity of $DA^{\le}(k)$ if follows that we can find a neighborhood of $\Spec l$, $\delta_1: \Spec l\overset{\delta_2}\hookrightarrow V \overset j\hookrightarrow W$ open and a $\bar z\in D^{\le t_{MM}^1}(V)$ such that $\delta_2^*(\bar z) = z$. In particular it follows that $\hom(\bar z, j^*\omega^1 i^!\bar a)=0$, since $j^*$ preserves $DA^{>t_{MM}^1}(k)$, using t-exactness of $j^*$ \cite[4.13]{lehalleur2015motivic}. We can even replace $V$ by any smaller open neighborhood of $\Spec l$, and hence using continuity, it follows that 
	\[
		\hom(z, \delta^! \bar a) = \hom(\delta_2^*\bar z, \delta_2^*j^*\omega^1i^!\bar a) = 0
	\]
	Since this is true for any $z\in DA^{\le}(l)$, we have that  $DA^{\le}(l)\perp \delta^!(\bar a)$, and hence by definition of a $t$-structure, $\delta^!(\bar a)\in DA^{>}(l)$. This completes the base case and we are done.
	
	\emph{Induction step}: This is ditto as in previous part (note that $\epsilon^*=\epsilon^!$ for $\epsilon$ generic).
	
	\underline{\emph{Verification}}: By what we have seen so far, using \ref{gluing:mainresult} gives a $t$ structure on $DA^1_c(S)$, let us denote it by $(D^{\le}(S), D^{>}(S))$ . We want to verify that this glued $t$-structure is same as the restriction of $t_{MM}^1(S)$, in particular, we have to show that $D^{\le}(S)\subset D^{\le t^1_{MM}}(S)$ and $D^{>}(S)\subset D^{> t^1_{MM}}(S)$. This is equivalent to showing that $D^{\le}(S)\perp D^{> t^1_{MM}}(S)$ and $D^{\le t^1_{MM}}(S)\perp D^{>}(S)$. We prove the claim by Noetherian induction on $S$, the base case being $S=\Spec k$, where it is clear since $t^1_{MM}(k)$ restricts to compact objects and this was the $t$ structure that we used for gluing -- see \cite[4.18]{lehalleur2015motivic} and the discussion preceding it.
	
	For the induction step, first let $\bar a\in D^{\le}(S)$ and $z\in D^{>t^1_{MM}}(S)$. Let $\epsilon:\Spec K\hookrightarrow S$ be a generic point of $S$. Then, by definition of a glued $t$-structure, $a=\epsilon^*\bar a\in DA^{\le}(K)$. By the additional claim in the continuity of $DA^{\le}(-)$, it follows that we can find an open neighborhood $\Spec K\overset{\epsilon_1}\hookrightarrow U\overset j\hookrightarrow S$ and an $a'\in D^{\le t^1_{MM}}$ such that $\epsilon_1^*a' = a = \epsilon^*\bar a$. By continuity, shrinking $U$ if necessary, we can assume that $a'=j^*(a)$. Since $j^*$ is exact for $t^1_{MM}$ by \cite[4.13]{lehalleur2015motivic}, it follows that $j^*\bar a\perp j^*z$. 
	
	Let $Z$ denote the complement of $U$. By construction of a glued $t$ structure, $i^*\bar a\in D^{\le}(Z)$. By \cite[4.13]{lehalleur2015motivic}, $\omega^1i^!z\in D^{>t^1_{MM}}(Z)$. Then $i^*\bar a \perp \omega^1i^!z$ by Noetherian induction. Therefore $\bar a \perp z$ by using localization triangle. 
	
	The claim for $D^{\le t^1_{MM}}(S)\perp D^{>}(S)$ is similar and this completes the verification.
\end{proof}

\bibliographystyle{alpha}
\bibliography{ehp1}{}	
\end{document}